\documentclass[12pt]{article}
\usepackage{mathtools,amsthm,amsfonts,enumitem}
\usepackage[margin=2.5cm]{geometry}
\usepackage[hidelinks]{hyperref}
\usepackage{microtype}
\makeatletter
\renewenvironment{abstract}{%
  \small
  \begin{center}%
    {\bfseries \abstractname\vspace{-.5em}\vspace{\z@}}%
  \end{center}%
  \quotation
}{%
  \endquotation
}
\makeatother

\numberwithin{equation}{section}
\theoremstyle{plain}
\newtheorem{theorem}{Theorem}[section]
\newtheorem{proposition}[theorem]{Proposition}
\newtheorem{lemma}[theorem]{Lemma}
\newtheorem{corollary}[theorem]{Corollary}
\theoremstyle{definition}
\newtheorem{definition}[theorem]{Definition}
\theoremstyle{remark}
\newtheorem{remark}[theorem]{Remark}

\providecommand{\cP}{\mathcal{P}}
\providecommand{\R}{\mathbb{R}}
\providecommand{\cF}{\mathcal{F}}
\providecommand{\cZ}{\mathcal{Z}}
\providecommand{\cQ}{\mathcal{Q}}
\providecommand{\ind}[1]{\mathbf{1}_{\{#1\}}}
\providecommand{\TV}{\mathrm{TV}}
\providecommand{\KL}{\mathrm{KL}}
\DeclareMathOperator{\supp}{supp}
\DeclareMathOperator{\dist}{dist}
\providecommand{\Xd}{X_{\mathrm d}}
\providecommand{\Fd}{F_{\mathrm d}}
\providecommand{\mud}{\mu^{\mathrm d}}
\providecommand{\Cd}{C^{\mathrm d}}
\providecommand{\Ped}{P^{\mathrm d}_\varepsilon}
\providecommand{\Pdstar}{(P^{\mathrm d})^\star}
\providecommand{\Gammad}{\Gamma^{\mathrm d}}
\providecommand{\Id}{I_{\Gamma^{\mathrm d}}}

\title{Failure of ambient closed-set large-deviation upper bounds in entropic optimal transport}
\author{Maja Gw\'o\'zd\'z\\ ETH Z\"urich\\ \texttt{mgwozdz@ethz.ch}}
\date{}
\makeatletter
\renewcommand{\maketitle}{%
  \begin{center}%
    {\Large\normalfont\@title\par}%
    \vspace{1em}%
    {\normalsize\normalfont\@author\par}%
  \end{center}%
  \vspace{1.5em}%
}
\makeatother
\begin{document}
\maketitle
\pagestyle{plain}
\begin{abstract}
Large-deviation upper bounds on compact sets do not, in general, extend to arbitrary closed sets without additional tightness. We show that this obstruction already occurs in static entropic optimal transport. More precisely, we construct a fixed-cost model with continuous cost and nonatomic marginals for which the entropic minimisers converge in total variation to an optimal plan with noncompact support, the known compact-set upper bound remains valid, but the corresponding closed-set upper bound fails on a specific closed subset of the ambient space. For a fixed closed set, we identify the exact tail criterion for passing from compact to closed sets. We show that there does not exist a full large-deviation principle (LDP) on the ambient space at speed \(1/\varepsilon\) with an arbitrary lower semicontinuous rate function.
\end{abstract}

\section{Introduction and main results}\label{sec:intro}

We study the zero-temperature limit of entropic optimal transport (EOT), that is, the static Schr\"odinger problem. Let \(X\) and \(Y\) be Polish spaces, let \(\mu\in\cP(X)\), \(\nu\in\cP(Y)\), and let \(C:X\times Y\to[0,\infty)\) be continuous. We write \(\Pi(\mu,\nu)\) for the set of Borel probability measures on \(X\times Y\), where the first marginal is \(\mu\) and the second one \(\nu\). For probability measures \(P,Q\) on some common measurable space, we write
\[
\KL(P\|Q):=
\begin{cases}
\displaystyle \int \log\!\left(\frac{dP}{dQ}\right)\,dP, & P\ll Q,\\[1ex]
+\infty, & \text{otherwise},
\end{cases}
\]
for the relative entropy of \(P\) with respect to \(Q\). We assume that there exists \(P_0\in\Pi(\mu,\nu)\) such that
\[
\int_{X\times Y} C\,dP_0<\infty
\qquad\text{and}\qquad
\KL(P_0\|\mu\otimes\nu)<\infty.
\]
Notice that under this assumption, for each \(\varepsilon>0\), the functional
\[
P\longmapsto \int_{X\times Y} C\,dP+\varepsilon\,\KL(P\|\mu\otimes\nu)
\]
admits a unique minimiser \(P_\varepsilon\in\Pi(\mu,\nu)\). We shall study the limit \(\varepsilon\downarrow0\). Let \(\varepsilon_n\downarrow0\), suppose \(P_{\varepsilon_n}\Rightarrow P^\star\), and set \(\Gamma:=\supp(P^\star)\). Bernton, Ghosal, and Nutz (BGN) showed \cite[Cor.~4.3]{BGN} that
\[
\limsup_{n\to\infty}\varepsilon_n\log P_{\varepsilon_n}(K)\le -\inf_K I_\Gamma,
\]
for every compact \(K\subset X\times Y\). One natural question is whether the same upper bound remains valid for arbitrary closed sets when \(\Gamma\) is noncompact.
We use \(I_\Gamma\) to denote the associated BGN rate \cite[Eq.~(4.3)]{BGN} (defined in \eqref{eq:BGNrate-prelim} below). Globally, exponential tightness ``upgrades'' compact-set upper bounds to closed-set upper bounds (\textit{cf}. \cite[Lem.~1.2.18(a)]{DemboZeitouni}). Recall that a family \((\cQ_\varepsilon)_{\varepsilon>0}\subset\cP(\cZ)\) is exponentially tight at speed \(1/\varepsilon\) if for every \(M<\infty\), there exists a compact set \(K_M\subset\cZ\) such that
\[
\limsup_{\varepsilon\downarrow0}\varepsilon\log \cQ_\varepsilon(K_M^c)\le -M.
\]
For a fixed closed set \(F\), we determine the exact tail exponent for this upgrade in Proposition~\ref{prop:tail-criterion}. In informal terms, it is a one-set version of the standard compact-plus-tail argument. In the fixed-marginal EOT setting, exponential tightness already fails whenever one marginal has noncompact support (see Proposition~\ref{prop:no-exp-tight-fixed-marg}). In Theorem~\ref{thm:main-cont}, we show that this obstruction is realised inside the fixed-cost entropic optimal transport with continuous cost. Indeed, the compact-set upper bound of \cite[Cor.~4.3]{BGN} remains valid on every compact set, but it need not extend to arbitrary closed sets.

\begin{theorem}[A nonatomic counterexample to the closed-set upper bound]\label{thm:main-cont}
There exist a closed subset \(X\subset\R\), a nonatomic measure \(\mu\in\cP(X)\), and a continuous cost \(C:X\times X\to[0,\infty)\) for which the entropic minimisers \(P_\varepsilon\in\Pi(\mu,\mu)\) satisfy the following properties:
\begin{enumerate}[label=\textup{(\roman*)},ref=\textup{(\roman*)},leftmargin=2.2em]
\item\label{item:main-cont-tv}
\(P_\varepsilon\to P^\star\) in total variation for an optimal plan \(P^\star\) with noncompact support \(\Gamma:=\supp(P^\star)\),
\item\label{item:main-cont-closed-set}
for some closed noncompact set \(F\subset X\times X\),
\[
\inf_F I_\Gamma>0
\qquad\text{but}\qquad
\limsup_{\varepsilon\downarrow0}\varepsilon\log P_\varepsilon(F)>-\inf_F I_\Gamma.
\]
\end{enumerate}
\end{theorem}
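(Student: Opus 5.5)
The plan is to build \(X\) as a disjoint union of blocks indexed by \(n\ge1\). On block \(n\), the cost penalises transport between two halves only once one moves a distance \(n^{-k}\) away from the ``allowed'' region. For fixed \(n\) the block is eventually resolved, so the compact-set bound of \cite[Cor.~4.3]{BGN} holds. For \(n\) of order \(e^{a/(k\varepsilon)}\), however, the block is still essentially unresolved at temperature \(\varepsilon\), and these far blocks carry enough \(\mu\)-mass to beat \(e^{-a/\varepsilon}\).

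\textbf{Construction.} Set \(X:=\bigcup_{n\ge1}[2n,2n+1]\), which is closed. Put \(A_n:=[2n,2n+\tfrac12]\) and \(B_n:=[2n+\tfrac12,2n+1]\). Let \(\mu\) have density on block \(n\) with total mass \(p_n\propto n^{-2}\), uniform within the block. Fix \(a>0\) and an integer \(k\ge3\). Let \(S:=\bigcup_n (A_n\times A_n\cup B_n\times B_n)\). Within block \(n\) set
\[
C(x,y):=a\min\bigl(1,n^k\dist((x,y),S)\bigr),
\]
and across different blocks let \(C\) be large, for instance \(a+|x-y|\), glued continuously. Then \(C\ge0\) is continuous. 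The plan \(P^\star:=\sum_n\bigl(\tfrac{2}{p_n}\mu|_{A_n}\otimes\mu|_{A_n}+\tfrac{2}{p_n}\mu|_{B_n}\otimes\mu|_{B_n}\bigr)\) lies in \(\Pi(\mu,\mu)\), has cost \(0\), has finite entropy, and is therefore optimal. Its support \(\Gamma=\overline S\) is noncompact.

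\textbf{Step 1 (part \ref{item:main-cont-tv}).} Since \(P^\star\ll\mu\otimes\mu\) has zero cost, I would use the standard \(\Gamma\)-convergence / monotonicity argument: \(\int C\,dP_\varepsilon\to0\) and \(\KL(P_\varepsilon\|\mu\otimes\mu)\le\KL(P^\star\|\mu\otimes\mu)\). Combining this with the uniqueness of the minimal-entropy optimal plan gives weak convergence \(P_\varepsilon\Rightarrow P^\star\). The entropy bound then makes the densities uniformly integrable, which upgrades this to convergence in total variation. A Scheff\'e-type argument, via \(\KL(P_\varepsilon\|P^\star)\to0\), should work.

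\textbf{Step 2 (rate).} Compute \(I_\Gamma\) from \eqref{eq:BGNrate-prelim}. The cost vanishes on \(\Gamma\), so the cyclical sup reduces to a two-cycle: for \((x,y)\in A_n\times B_n\), close the cycle through \((y,y),(x,x)\in\Gamma\) or through suitable points of \(\Gamma\). This gives \(I_\Gamma(x,y)\ge C(x,y)\). Take
\[
F:=\bigcup_n\bigl\{(x,y)\in(A_n\times B_n)\cup(B_n\times A_n):\dist((x,y),S)\ge n^{-k}\bigr\},
\]
which is closed because the blocks are separated, and noncompact. On \(F\) we have \(C=a\), hence \(\inf_F I_\Gamma\ge a>0\).

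\textbf{Step 3 (main obstacle: lower bound on \(P_\varepsilon(F)\)).} Write \(P_\varepsilon=e^{(f_\varepsilon\oplus g_\varepsilon-C)/\varepsilon}\,\mu\otimes\mu\) with Schr\"odinger potentials, symmetric since the problem is symmetric. I need to show that on block \(n\) with \(n^k\gtrsim e^{a/\varepsilon}/\varepsilon\), a constant fraction of \(P_\varepsilon(\text{block }n)\approx p_n\) lies in \(F\).

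The heuristic is as follows. Cross-block mass is negligible, and within the block the reference mass of \(S\) plus its \(n^{-k}\)-neighbourhood is comparable to that of the cross region. The exponential penalty \(e^{-a/\varepsilon}\) on the cross region is then compensated by the ratio of reference masses, which is of order \(n^{k}\varepsilon\).

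To make this rigorous, I would first bound the oscillation of the potentials within a block using the marginal equations \(e^{-f(x)/\varepsilon}=\int e^{(g(y)-C(x,y))/\varepsilon}\,\mu(dy)\). Since \(C\in[0,a]\) within a block, the potentials on \(A_n\) and on \(B_n\) are each nearly constant up to \(O(\varepsilon\log)\) terms, and by symmetry the two constants agree. Integrating the density over \(F\cap\text{block }n\) against the complement in that block then yields \(P_\varepsilon(F\cap\text{block }n)\ge c\,p_n\) whenever \(n^k\ge e^{a/\varepsilon}/\varepsilon\).

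Choosing \(n\approx(e^{a/\varepsilon}/\varepsilon)^{1/k}\) gives
\[
\varepsilon\log P_\varepsilon(F)\ge -\tfrac{2a}{k}+o(1)>-a
\]
for \(k\ge3\), which proves part \ref{item:main-cont-closed-set}. Consistently, Proposition~\ref{prop:tail-criterion} fails here because the tail of \(F\) over far blocks decays only like \(e^{-2a/(k\varepsilon)}\).
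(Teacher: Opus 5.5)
There is a genuine gap in Step~3, and it sinks the construction: the claimed bound $P_\varepsilon(F\cap\text{block }n)\ge c\,p_n$ is false. The ratio $n^k\varepsilon$ you invoke compares the cross region (weight $e^{-a/\varepsilon}$) only with the thin ramp (weight of order $\varepsilon n^{-k}/a$). The relevant competitor is the zero-cost set $(A_n\times A_n)\cup(B_n\times B_n)$ itself, and its reference mass is comparable to that of the cross region, as you note yourself. Your own oscillation step (potentials nearly constant on $A_n$ and on $B_n$) then shows that a row $x\in A_n$ sends only a fraction of order $e^{-a/\varepsilon}$ of its mass into $F$, for every $n$.

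In fact the closed-set bound provably holds on your $F$. Write $\rho=e^{(f\oplus g-C)/\varepsilon}$ for the density of $P_\varepsilon$ with respect to $\mu\otimes\mu$. Take $(x,y)\in F\cap(A_n\times B_n)$ and $(x',y')\in F\cap(B_n\times A_n)$. Then $C(x,y)=C(x',y')=a$ and $C(x,y')=C(x',y)=0$, so $\rho(x,y)\rho(x',y')=e^{-2a/\varepsilon}\rho(x,y')\rho(x',y)$. Integrating, and using the swap symmetry of $P_\varepsilon$ and of $F$,
\[
P_\varepsilon\bigl(F\cap(A_n\times B_n)\bigr)^2\le e^{-2a/\varepsilon}\,P_\varepsilon(A_n\times A_n)\,P_\varepsilon(B_n\times B_n)\le e^{-2a/\varepsilon}\,\frac{p_n^2}{4}.
\]
Hence $P_\varepsilon(F)\le e^{-a/\varepsilon}\sum_n p_n=e^{-a/\varepsilon}$ for every $\varepsilon>0$. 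Since $C$ vanishes on $\Gamma$, every bracket in the definition of $I_\Gamma$ is at most $C(x,y)$. Together with your two-point bound this gives $I_\Gamma=a$ on $F$, so $\limsup_{\varepsilon\downarrow0}\varepsilon\log P_\varepsilon(F)\le-\inf_F I_\Gamma$ and nothing fails. The steepness of the ramp is irrelevant: on $F$, far blocks behave exactly like near ones.

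What is missing is a mechanism by which the number of penalised states grows faster than the penalty. In the paper, block $n$ has $m_n+1\approx e^{\kappa n}$ cells, and the zero-cost diagonal cells carry only a fraction $1/(m_n+1)$ of the block's reference mass. By the cross-ratio identity, each off-diagonal cell in row $(n,1)$ carries $e^{-b/\varepsilon}$ times the mass of the diagonal cell. The diagonal cell itself keeps a fixed fraction of the row ($d_n\ge\frac14\mu^{\mathrm d}(\{(n,1)\})$). So the off-diagonal share of the row is of order $m_n e^{-b/\varepsilon}$, which is $e^{-(b-\kappa)/\varepsilon_n}$ along $\varepsilon_n=1/n$. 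With only two cells per block, your multiplicity stays bounded and nothing can beat $e^{-a/\varepsilon}$.

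The distinguished cell with the cheaper cost $a$ is also essential. Without it, every re-pairing in the definition of $I_\Gamma$ would have to use a non-diagonal pair, which costs at least $b$ in far blocks. Then $I_\Gamma$ would vanish on the off-diagonal pairs of all far blocks and $\inf_F I_\Gamma=0$. The hub gives $I_\Gamma\ge b-2a$, and $\kappa>2a$ makes the mass exponent $b-\kappa$ beat it.

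Finally, taking the cost constant on clopen label rectangles reduces the continuous problem exactly to the label problem via the entropy chain rule. This removes any need for the oscillation estimates on potentials that you planned.
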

\begin{theorem}[No full LDP on \(X\times X\)]\label{thm:intro-no-ambient}
In the context of Theorem~\ref{thm:main-cont}, there does not exist a lower semicontinuous rate function \(I_{\mathrm{amb}}:X\times X\to[0,\infty]\) such that \((P_\varepsilon)_{\varepsilon>0}\) satisfies a full LDP on \(X\times X\) at speed \(1/\varepsilon\).
\end{theorem}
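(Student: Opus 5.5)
Suppose, for contradiction, that \((P_\varepsilon)\) satisfies a full LDP on \(X\times X\) at speed \(1/\varepsilon\) with some lower semicontinuous \(I_{\mathrm{amb}}\). The plan is to show that on compact sets \(I_{\mathrm{amb}}\) must coincide with the BGN rate \(I_\Gamma\). Once that is known, the closed-set upper bound of the LDP, applied to the closed set \(F\) of Theorem~\ref{thm:main-cont}\ref{item:main-cont-closed-set}, would give
\[
\limsup_{\varepsilon\downarrow0}\varepsilon\log P_\varepsilon(F)\le-\inf_F I_{\mathrm{amb}}=-\inf_F I_\Gamma,
\]
contradicting the strict inequality established there.

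\textbf{Step 1: matching upper bound on compacts.} I would first show \(I_{\mathrm{amb}}\ge I_\Gamma\) pointwise. Fix \(z\) and a small closed ball \(\bar B(z,r)\). Since \(X\subset\R\) is closed, the space \(X\times X\) is locally compact, so this ball is compact. The LDP lower bound on the open ball \(B(z,r)\), together with the BGN compact upper bound \cite[Cor.~4.3]{BGN} on \(\bar B(z,r)\), gives
\[
-\inf_{B(z,r)} I_{\mathrm{amb}}\le\liminf\varepsilon\log P_\varepsilon(B(z,r))\le-\inf_{\bar B(z,r)} I_\Gamma .
\]
Letting \(r\downarrow0\), lower semicontinuity of \(I_\Gamma\) gives \(\inf_{\bar B(z,r)}I_\Gamma\to I_\Gamma(z)\). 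The quantity \(\inf_{B(z,r)}I_{\mathrm{amb}}\) is at most \(I_{\mathrm{amb}}(z)\) for every \(r\). Hence \(I_{\mathrm{amb}}(z)\ge I_\Gamma(z)\).

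\textbf{Step 2: conclude the contradiction.} With \(I_{\mathrm{amb}}\ge I_\Gamma\) in hand, the LDP upper bound on \(F\) yields
\[
\limsup_{\varepsilon\downarrow0}\varepsilon\log P_\varepsilon(F)\le-\inf_F I_{\mathrm{amb}}\le-\inf_F I_\Gamma .
\]
This contradicts Theorem~\ref{thm:main-cont}\ref{item:main-cont-closed-set}. Only the inequality \(I_{\mathrm{amb}}\ge I_\Gamma\) is needed, so the reverse inequality never has to be proved.

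\textbf{Main obstacle.} The argument relies on two properties of \(I_\Gamma\) that I must check against its definition \eqref{eq:BGNrate-prelim}.
\begin{enumerate}[label=\textup{(\alph*)}]
\item \(I_\Gamma\) must be lower semicontinuous, so that the infima over shrinking balls converge to \(I_\Gamma(z)\).
\item The BGN compact bound must apply to the whole family \(\varepsilon\downarrow0\), not only along a subsequence \(\varepsilon_n\).
\end{enumerate}
Point (b) should follow from the total-variation convergence in Theorem~\ref{thm:main-cont}\ref{item:main-cont-tv}: every sequence converges to the same \(P^\star\), so \(\Gamma\) does not depend on the sequence.

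If \(I_\Gamma\) is not lower semicontinuous, I would replace it by its lower semicontinuous envelope \(\underline{I}_\Gamma\). The compact bound still holds with \(\underline{I}_\Gamma\), because infima over open neighbourhoods are unchanged. I would then verify that the construction of \(F\) in Theorem~\ref{thm:main-cont} still gives strict failure with \(\inf_F \underline{I}_\Gamma\). For the specific \(F\) this should hold because \(\inf_F I_\Gamma\) is attained in a locally uniform way, but it is the point to double-check in the paper's construction.
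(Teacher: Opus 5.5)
Your proof is correct, but it reaches the contradiction by a different route from the paper.

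The paper never compares \(I_{\mathrm{amb}}\) with \(I_\Gamma\). It computes the exact exponent \(\lim_{\varepsilon\downarrow0}\varepsilon\log P_\varepsilon(R_{N,i,j})=-b\) on each fixed clopen off-diagonal rectangle (Lemma~\ref{lem:fixed-rectangle-exponent}, from block symmetry, the cross-ratio identity \(o=d\,e^{-b/\varepsilon}\) and total-variation convergence). The LDP upper and lower bounds on these clopen sets then force \(\inf_{R_{N,i,j}}I_{\mathrm{amb}}=b\), hence \(\inf_F I_{\mathrm{amb}}=b\). This is incompatible with the escaping mass \(\limsup\varepsilon\log P_\varepsilon(F)\ge-(b-\kappa)\).

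You instead use local compactness of \(X\times X\) to sandwich the LDP lower bound on \(B(z,r)\) against the BGN compact bound on \(\bar B(z,r)\). This gives \(I_{\mathrm{amb}}\ge I_\Gamma\) pointwise, and you then use Theorem~\ref{thm:main-cont}\ref{item:main-cont-closed-set} as a black box.

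Both points you flagged are harmless:
\begin{itemize}
\item[(b)] This is exactly Corollary~\ref{cor:BGN-full-limit}.
\item[(a)] You need neither the general lower semicontinuity of \(I_\Gamma\) nor the envelope fallback. By Proposition~\ref{prop:transfer-cont}\ref{item:transfer-cont-rate}, \(I_\Gamma\) is constant on each clopen rectangle. Distinct intervals are at distance at least \(2\), so for \(r<2\) the ball \(\bar B(z,r)\) lies in a single rectangle and \(\inf_{\bar B(z,r)}I_\Gamma=I_\Gamma(z)\).
\end{itemize}

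Your route buys generality. On any locally compact space, a rate satisfying the LDP lower bound must dominate every lower semicontinuous function satisfying the compact-set upper bound. So Theorem~\ref{thm:intro-no-ambient} becomes a formal corollary of Theorem~\ref{thm:main-cont}, with no further computation in the model.

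The paper's route has different advantages. It is self-contained: it uses neither the BGN compact bound nor local compactness, only clopenness of the rectangles. It also gives sharper information: any candidate rate would have to satisfy \(\inf_F I_{\mathrm{amb}}=b\), not merely \(\ge b-2a\). So the obstruction is not an artefact of the BGN candidate being too small.
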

The selected limit is a block-product plan that we obtain by replacing each discrete label by a compact interval (see \eqref{eq:Pstar-cont} for the formula). Its support is a countable union of diagonal rectangles that escape to infinity, which implies that the support is noncompact. The obstruction is stronger than the failure of the support-based candidate \(I_\Gamma\) on one closed set. Indeed, in the constructed example, there does not exist a lower semicontinuous ambient rate function that could give a full LDP on \(X\times X\), even though the BGN compact-set upper bound \cite[Cor.~4.3]{BGN} remains valid on every compact set.

The construction is deliberately topological rather than rigid in geometric terms. More precisely, the space \(X\) is a disconnected closed subset of \(\R\), and the cost \(C\) is a blockwise constant. In this work, we identify an ambient-space obstruction to upgrading compact-set upper bounds, but it does not rule out the possibility that additional geometric hypotheses, such as connected supports, stronger coercivity, or more rigid ambient geometry, could guarantee a full LDP.

\subsection{Idea of the construction}\label{subsec:idea}
The construction starts from a discrete block model. In block \(n\), the \(m_n(m_n-1)\) ordered off-diagonal states all have cost \(b\). In heuristic terms, the symmetry puts each such state on the scale \(w_n m_n^{-1}e^{-b/\varepsilon_n}\). When \(m_n\asymp e^{\kappa n}\), the block mass is on the scale \(w_n e^{-(b-\kappa)/\varepsilon_n}\). In Section~\ref{sec:discrete} and Appendix~\ref{app:block-bookkeeping}, we make this heuristic more precise. It follows that the multiplicity lowers the effective exponent from \(b\) to \(b-\kappa\). This mechanism is related to the exponential-penalty asymptotics in linear programming. For more details, compare the exponential-penalty analysis of \cite{CominettiSanMartin} and, for the direct entropic-penalty context with explicit nonasymptotic bounds, see \cite{Weed2018}. We obtain the nonatomic example by replacing each discrete label with a compact interval that is clopen in the relative topology of \(X\) and by taking the cost to be constant on label rectangles. By the entropy chain rule, we then show that the continuous Schr\"odinger problem is the exact lift of the discrete one.

The paper is organised as follows. In Section~\ref{sec:prelim}, we develop the general tail criterion and the obstruction related to the escaping set. In Section~\ref{sec:discrete}, we construct the discrete counterexample. Section~\ref{sec:nonatomic} then lifts the construction to a nonatomic model and proves the that a full LDP on \(X\times X\) cannot exist. In Appendix~\ref{app:superlevel-tail}, we give an auxiliary sufficient criterion that guarantees the closed-set upper bound. Finally, in Appendices~\ref{app:schroedinger}--\ref{app:routine-checks}, we present some relevant discrete Schr\"odinger facts and the basic parameter checks.

\subsection{Related works}\label{subsec:related}
For background on the static Schr\"odinger problem and its relation to EOT, see \cite{LeonardSurvey}. For convergence towards optimal transport, once the regularisation vanishes, see \cite{LeonardJFA}. For the squared-Euclidean context, consult also \cite{CarlierDuvalPeyreSchmitzer}. For the fixed-cost static problem, BGN prove the compact-set upper bound and note that, when \(P^\star=\lim_{\varepsilon\downarrow0}P_\varepsilon\) is not compactly supported, it is open whether this corollary extends from compact to arbitrary closed sets \cite[Cor.~4.3]{BGN}. In Theorem~\ref{thm:main-cont}, we give a negative answer in that original context. In our example, the BGN compact-set upper bound \cite[Cor.~4.3]{BGN} is still valid on every compact set, but it cannot be upgraded to all closed sets once the selected support is noncompact.

For compactly supported \(L^\infty\) marginals and broad classes of costs, Carlier, Pegon, and Tamanini identify the behaviour of the difference between the entropic transport value and the unregularised transport value \cite{CarlierPegonTamaniniValue}. Eckstein and Nutz prove sharp convergence rates for general divergence-regularised transport problems, which include the entropic case \cite{EcksteinNutzQuantization}. Malamut and Sylvestre derive convergence rates for the regularised plans, transport cost, and entropy term in the static entropic problem \cite{MalamutSylvestreRates}, and L\'opez-Rivera proves a uniform convergence rate for entropic potentials in the quadratic Euclidean context \cite{LopezRiveraUniformPotentials}.

A different open problem is described in \cite{NilssonNyquistReflection}, where the reference measures vary with the noise level and are given by costs \(c_\eta\to c\). In that setting with variable cost, the main result is a ``weak-type'' large-deviation statement rather than a weak LDP in the standard sense, while compact support of both marginals restores the full LDP \cite[Thm.~4.2, Cor.~4.3]{NilssonNyquistReflection}. Our obstruction is stronger than the failure of exponential tightness noted in \cite[Rem.~4.2(b)]{NutzWieselPotentials}. It also does not contradict \cite[Cor.~1]{KatoDynamic}, which gives a full LDP in the Brownian dynamical setting when both endpoint supports are compact and one of them is the closure of a connected open set. On compact spaces, Nutz and Wiesel show that convergence of Schr\"odinger potentials provides upper and lower bounds for arbitrary measurable sets in terms of the essential infimum of \(c-f-g\), where \(f,g\) denote the limiting potentials and \(c-f-g\) denotes the function \((x,y)\mapsto c(x,y)-f(x)-g(y)\), and that a full LDP determines the limiting potentials \cite[Prop.~4.1, Rem.~4.2(a), and Prop.~4.5]{NutzWieselPotentials}. For the dynamic or variable-cost large deviations beyond the static context with fixed costs, see also \cite{KatoDynamic,NilssonNyquistReflection,NilssonNyquistDynamic}. For an orthogonal issue on the entropic selection problem for non-strictly convex costs, see \cite{DiMarinoLouet,LeyLineSelection} and \cite{AryanGhosalSelection} for the Euclidean distance on disjoint compact supports in higher dimensions.

\section{Compact-set upper bounds and tail criteria}\label{sec:prelim}

\subsection{The BGN rate and the compact upper bound}\label{subsec:bgn-application}
Under the assumptions of Section~\ref{sec:intro}, recall that
\[
J_\varepsilon(P):=\int C\,dP + \varepsilon\,\KL(P\|\mu\otimes\nu),\qquad P\in\Pi(\mu,\nu).
\]
For probability measures \(P,Q\) on a common measurable space, we write
\[
\|P-Q\|_{\TV}:=\sup_A |P(A)-Q(A)|=\frac12\int |dP-dQ|
\]
for the total variation distance. By the finiteness assumption, if
\[
dR_\varepsilon:=Z_\varepsilon^{-1}e^{-C/\varepsilon}\,d(\mu\otimes\nu),
\]
then
\[
J_\varepsilon(P)=\varepsilon\,\KL(P\|R_\varepsilon)-\varepsilon\log Z_\varepsilon.
\]
This implies that \(J_\varepsilon\) has a unique minimiser \(P_\varepsilon\in\Pi(\mu,\nu)\) for every \(\varepsilon>0\) by \cite[Prop.~A.1]{BGN}. We follow \cite{BGN}, and define $I_\Gamma:X\times Y\to[0,\infty]$ by
\begin{equation}\label{eq:BGNrate-prelim}
\begin{aligned}
I_\Gamma(x,y)
:={}&\sup_{k\ge 2}\ 
\sup_{\substack{(x_i,y_i)_{i=2}^k\in \Gamma^{k-1}}}\ 
\sup_{\sigma\in \Sigma(k)}
\Bigg[
\sum_{i=1}^k C(x_i,y_i)
-\sum_{i=1}^k C(x_i,y_{\sigma(i)})
\Bigg],\\
&\hspace{7.5em} (x_1,y_1)=(x,y).
\end{aligned}
\end{equation}
where $\Sigma(k)$ denotes the permutations of $\{1,\dots,k\}$. For an arbitrary sequence \(\varepsilon_k\downarrow0\) with \(P_{\varepsilon_k}\Rightarrow P^\star\), we set \(\Gamma:=\supp(P^\star)\) and take \(I_\Gamma\) from \eqref{eq:BGNrate-prelim} as the associated BGN candidate rate. By \cite[Prop.~3.2]{BGN}, every such limit \(P^\star\) is an optimal transport plan for \(C\). In particular, \(\Gamma\) is \(C\)-cyclically monotone.

\begin{lemma}[Two-point lower bound for \(I_\Gamma\)]\label{lem:I-two-point}
For every \((x,y)\in X\times Y\),
\[
I_\Gamma(x,y)\ \ge\ \sup_{(x',y')\in\Gamma}\Big( C(x,y)+C(x',y')-C(x,y')-C(x',y)\Big).
\]
\end{lemma}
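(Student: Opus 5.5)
The plan is to restrict the suprema in the definition \eqref{eq:BGNrate-prelim} to the smallest admissible choices: take \(k=2\) and let \(\sigma\) be the transposition of \(\{1,2\}\). Since \(I_\Gamma(x,y)\) is a supremum over \(k\ge2\), over \((x_i,y_i)_{i=2}^k\in\Gamma^{k-1}\) and over \(\sigma\in\Sigma(k)\), it dominates each term obtained from one particular admissible choice. Nothing beyond the definition should be needed.

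Fix \((x,y)\in X\times Y\) and an arbitrary \((x',y')\in\Gamma\). I set \(k=2\), \((x_1,y_1)=(x,y)\) and \((x_2,y_2)=(x',y')\); this tuple is admissible because the only constraint is \((x_2,y_2)\in\Gamma^{1}=\Gamma\). With \(\sigma=(1\,2)\), so that \(\sigma(1)=2\) and \(\sigma(2)=1\), the bracket in \eqref{eq:BGNrate-prelim} becomes
\[
C(x_1,y_1)+C(x_2,y_2)-C(x_1,y_{\sigma(1)})-C(x_2,y_{\sigma(2)})
= C(x,y)+C(x',y')-C(x,y')-C(x',y).
\]
All four costs are finite, because \(C\) takes values in \([0,\infty)\), so this expression is a well-defined real number. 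Hence \(I_\Gamma(x,y)\) is at least this quantity.

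Since \((x',y')\in\Gamma\) was arbitrary, taking the supremum over \(\Gamma\) on the right-hand side gives the claim. If \(\Gamma\) were empty the supremum would be \(-\infty\) and the bound would be trivial, but \(\Gamma=\supp(P^\star)\) is nonempty for a probability measure on a Polish space. I expect no real obstacle here. The only point to check is that the convention in \eqref{eq:BGNrate-prelim} fixes \((x_1,y_1)=(x,y)\) and lets only the remaining points range over \(\Gamma\); this is exactly what makes \((x',y')\) an admissible second point.
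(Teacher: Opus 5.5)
Your proof is correct and is exactly the paper's argument: specialise the definition of \(I_\Gamma\) to \(k=2\), \((x_2,y_2)=(x',y')\in\Gamma\) and the transposition of \(1\) and \(2\), then take the supremum over \((x',y')\in\Gamma\). Your remarks on finiteness of the costs and nonemptiness of \(\Gamma\) are harmless extras that the paper leaves implicit.
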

\begin{proof}
In \eqref{eq:BGNrate-prelim}, it suffices to take \(k=2\), choose \((x_2,y_2)=(x',y')\in\Gamma\), and let \(\sigma\) be the transposition that exchanges \(1\) and \(2\).
The bracket in \eqref{eq:BGNrate-prelim} is then equal to \(C(x,y)+C(x',y')-C(x,y')-C(x',y)\). Finally, we take the supremum over \((x',y')\in\Gamma\) and obtain the claim.
\end{proof}

\begin{corollary}[Compact-set upper bound under full convergence]\label{cor:BGN-full-limit}
Let us assume that \(P_\varepsilon\Rightarrow P^\star\) as \(\varepsilon\downarrow0\), and set \(\Gamma:=\supp(P^\star)\). It follows that for every compact \(K\subset X\times Y\),
\[
\limsup_{\varepsilon\downarrow0}\varepsilon\log P_\varepsilon(K)\le -\inf_K I_\Gamma.
\]
\end{corollary}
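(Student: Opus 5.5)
The plan is to reduce to the sequential statement \cite[Cor.~4.3]{BGN} by a subsequence argument. Fix a compact \(K\subset X\times Y\). Choose a sequence \(\varepsilon_n\downarrow0\) that realises the limit superior, that is,
\[
\lim_{n\to\infty}\varepsilon_n\log P_{\varepsilon_n}(K)=\limsup_{\varepsilon\downarrow0}\varepsilon\log P_\varepsilon(K).
\]
Such a sequence exists by the definition of the limit superior; the value may be \(-\infty\), in which case there is nothing to prove.

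Since \(P_\varepsilon\Rightarrow P^\star\) along the whole family \(\varepsilon\downarrow0\), we also have \(P_{\varepsilon_n}\Rightarrow P^\star\) along this particular sequence. The support of the limit is therefore the same set \(\Gamma=\supp(P^\star)\) for every such sequence. Consequently the candidate rate \(I_\Gamma\) from \eqref{eq:BGNrate-prelim} does not depend on the chosen sequence. This is exactly why full convergence is assumed. Without it, different subsequences could select different limits and hence different rates, and a single rate could not control the whole family.

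Next we apply \cite[Cor.~4.3]{BGN} to the sequence \((\varepsilon_n)\) and the compact set \(K\). This gives
\[
\lim_{n\to\infty}\varepsilon_n\log P_{\varepsilon_n}(K)\le-\inf_K I_\Gamma,
\]
which is the claim by the choice of \((\varepsilon_n)\).

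The only step that needs care is confirming that the hypotheses of \cite[Cor.~4.3]{BGN} are met. Continuity of \(C\), the finiteness assumption on \(P_0\), and weak convergence along the sequence are all standing assumptions here, so the cited result applies as stated. No further estimate is needed.
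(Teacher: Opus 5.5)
Your proposal is correct and matches the paper's proof: pick a sequence \(\varepsilon_n\downarrow0\) realising the limit superior, note that \(P_{\varepsilon_n}\Rightarrow P^\star\) still holds along it (so \(\Gamma\) and \(I_\Gamma\) are unchanged), and apply \cite[Cor.~4.3]{BGN} along that sequence. Your explicit treatment of the \(-\infty\) case and your remark on why full convergence is needed are correct additions that the paper leaves implicit.
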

\begin{proof}
Let us choose a sequence \(\varepsilon_n\downarrow0\) with
\[
\varepsilon_n\log P_{\varepsilon_n}(K)\longrightarrow
\limsup_{\varepsilon\downarrow0}\varepsilon\log P_\varepsilon(K).
\]
Given that \(P_{\varepsilon_n}\Rightarrow P^\star\) holds, we may apply Corollary~4.3 of \cite{BGN} directly along the subsequence \((P_{\varepsilon_n})\). We then obtain:
\[
\limsup_{n\to\infty}\varepsilon_n\log P_{\varepsilon_n}(K)\le -\inf_K I_\Gamma.
\]
\end{proof}

\subsection{A fixed closed-set tail criterion}\label{subsec:tail-prelim}

In this subsection, we determine the tail exponent that allows for passing from compact to closed sets.

\begin{proposition}[Closed-set criterion]\label{prop:tail-criterion}
Let \(\cZ\) be Polish and \((\cQ_\varepsilon)_{\varepsilon>0}\subset\cP(\cZ)\).
Let us further assume there exists \(I:\cZ\to[0,\infty]\) such that for every compact \(K\subset\cZ\),
\[
\limsup_{\varepsilon\downarrow0}\varepsilon\log \cQ_\varepsilon(K)\le -\inf_K I.
\]
For a closed set \(\cF\subset\cZ\), we define
\[
\beta(\cF):=
\inf_{K\subset\cZ\ \mathrm{compact}}
\limsup_{\varepsilon\downarrow0}\varepsilon\log \cQ_\varepsilon(\cF\cap K^c)
\in[-\infty,0].
\]
This implies that
\[
\limsup_{\varepsilon\downarrow0}\varepsilon\log \cQ_\varepsilon(\cF)
\le
\max\bigl\{-\inf_{\cF} I,\ \beta(\cF)\bigr\}.
\]
In particular, the closed-set upper bound
\[
\limsup_{\varepsilon\downarrow0}\varepsilon\log \cQ_\varepsilon(\cF)\le -\inf_{\cF}I
\]
holds if and only if
\[
\beta(\cF)\le -\inf_{\cF} I.
\]
\end{proposition}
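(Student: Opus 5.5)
The argument is the standard compact-plus-tail splitting, applied to the single closed set \(\cF\) rather than uniformly over all closed sets. Fix a compact \(K\subset\cZ\). Write \(\cF=(\cF\cap K)\cup(\cF\cap K^c)\). The set \(\cF\cap K\) is compact, so the assumed compact-set bound gives
\[
\limsup_{\varepsilon\downarrow0}\varepsilon\log\cQ_\varepsilon(\cF\cap K)\le-\inf_{\cF\cap K}I\le-\inf_{\cF}I .
\]
Next, \(\cQ_\varepsilon(\cF)\le 2\max\{\cQ_\varepsilon(\cF\cap K),\cQ_\varepsilon(\cF\cap K^c)\}\). We apply the elementary principle of the largest term: \(\limsup\varepsilon\log(a_\varepsilon+b_\varepsilon)=\max\{\limsup\varepsilon\log a_\varepsilon,\limsup\varepsilon\log b_\varepsilon\}\), and \(\varepsilon\log 2\to0\). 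This yields
\[
\limsup_{\varepsilon\downarrow0}\varepsilon\log\cQ_\varepsilon(\cF)\le\max\Bigl\{-\inf_{\cF}I,\ \limsup_{\varepsilon\downarrow0}\varepsilon\log\cQ_\varepsilon(\cF\cap K^c)\Bigr\}.
\]
The left side does not depend on \(K\). Taking the infimum over compact \(K\) and using that \(t\mapsto\max\{a,t\}\) is nondecreasing and continuous on \([-\infty,0]\), we obtain the bound with \(\beta(\cF)\). The convention \(\log0=-\infty\) is harmless here.

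For the equivalence, the direction ``\(\beta(\cF)\le-\inf_\cF I\Rightarrow\) closed-set bound'' is immediate from the displayed inequality, since the maximum then equals \(-\inf_\cF I\). For the converse, suppose the closed-set bound holds. Choose any compact \(K\), for instance \(K=\emptyset\). Monotonicity \(\cQ_\varepsilon(\cF\cap K^c)\le\cQ_\varepsilon(\cF)\) then gives
\[
\beta(\cF)\le\limsup_{\varepsilon\downarrow0}\varepsilon\log\cQ_\varepsilon(\cF)\le-\inf_\cF I .
\]

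The only point needing care is the interchange of the infimum over \(K\) with the maximum, particularly the edge cases \(\inf_\cF I=+\infty\) and \(\beta(\cF)=-\infty\). When \(\beta(\cF)=-\infty\), the quantities \(\limsup\varepsilon\log\cQ_\varepsilon(\cF\cap K_j^c)\) can be made arbitrarily negative along a sequence of compacts \(K_j\). The bound for each fixed \(K_j\) then gives the claim in the limit. No tightness or Polish-space structure is really used beyond the fact that the sets involved are Borel.
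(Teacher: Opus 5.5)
Your proposal is correct and follows the paper's proof: you split \(\cF\) into the compact piece \(\cF\cap K\), handled by the compact-set bound, and the tail \(\cF\cap K^c\), combine them via the \(\varepsilon\log 2\) largest-term estimate, and take the infimum over \(K\). Your converse via monotonicity is also the paper's; taking \(K=\emptyset\) is just a shortcut for its ``for every compact \(K\)''.
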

\begin{proof}
We begin by fixing a closed set \(\cF\subset\cZ\) and a compact set \(K\subset\cZ\).
Observe that \(\cF\cap K\) is compact, so by our assumption on the compact-set upper bound, we obtain
\[
\limsup_{\varepsilon\downarrow0}\varepsilon\log \cQ_\varepsilon(\cF\cap K)
\le -\inf_{\cF\cap K} I
\le -\inf_{\cF} I.
\]
We now apply
\[
\cQ_\varepsilon(\cF)\le \cQ_\varepsilon(\cF\cap K)+\cQ_\varepsilon(\cF\cap K^c)
\]
and
\[
\varepsilon\log(u+v)\le \varepsilon\log 2+\max\{\varepsilon\log u,\varepsilon\log v\},
\]
which gives
\[
\limsup_{\varepsilon\downarrow0}\varepsilon\log\cQ_\varepsilon(\cF)
\le
\max\Bigl\{-\inf_{\cF} I,\ \limsup_{\varepsilon\downarrow0}\varepsilon\log \cQ_\varepsilon(\cF\cap K^c)\Bigr\}.
\]
We now take the infimum over compact \(K\) to obtain:
\[
\limsup_{\varepsilon\downarrow0}\varepsilon\log \cQ_\varepsilon(\cF)
\le
\max\bigl\{-\inf_{\cF} I,\ \beta(\cF)\bigr\}.
\]
If \(\beta(\cF)\le -\inf_{\cF} I\), the bound gives the closed-set upper bound on \(\cF\).
On the other hand, if the closed-set upper bound holds on \(\cF\), then
\[
\cQ_\varepsilon(\cF\cap K^c)\le \cQ_\varepsilon(\cF)
\]
for every compact \(K\), so
\[
\beta(\cF)\le -\inf_{\cF} I
\]
follows after we take \(\limsup_{\varepsilon\downarrow0}\varepsilon\log\) and then the infimum over \(K\).
\end{proof}
Note that Proposition~\ref{prop:tail-criterion} is a specialisation, applied to fixed sets, of the standard compact-plus-tail argument in large-deviation theory.

\begin{proposition}[Failure on closed sets]\label{prop:escaping-obstruction}
Let \(\cZ\) be Polish and \((\cQ_\varepsilon)_{\varepsilon>0}\subset\cP(\cZ)\).
We assume that \(I:\cZ\to[0,\infty]\) satisfies the compact-set upper bound
\[
\limsup_{\varepsilon\downarrow0}\varepsilon\log \cQ_\varepsilon(K)\le -\inf_K I
\qquad\text{for every compact }K\subset\cZ.
\]
Let \(F\subset\cZ\) be closed, and suppose that there exist a sequence \(\varepsilon_n\downarrow0\), Borel sets \(E_n\subset F\), and a number \(s\ge0\) such that
\begin{enumerate}[label=\textup{(\roman*)},ref=\textup{(\roman*)},leftmargin=2.2em]
\item\label{item:escaping-obstruction-compacts}
for every compact \(K\subset\cZ\), we have \(E_n\subset K^c\) for all sufficiently large \(n\),
\item
\[
\limsup_{n\to\infty}\varepsilon_n\log \cQ_{\varepsilon_n}(E_n)\ge -s.
\]
\end{enumerate}
This implies that
\[
\beta(F)\ge -s.
\]
In particular, if \(s<\inf_F I\), then
\[
\beta(F)>-\inf_F I,
\]
which shows that the closed-set upper bound fails on \(F\).
\end{proposition}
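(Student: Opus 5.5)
The argument only needs the definition of \(\beta(F)\) and Proposition~\ref{prop:tail-criterion}. Fix an arbitrary compact \(K\subset\cZ\); the goal is to show
\[
\limsup_{\varepsilon\downarrow0}\varepsilon\log \cQ_\varepsilon(F\cap K^c)\ge -s.
\]
By hypothesis (i), there is \(n_0=n_0(K)\) such that \(E_n\subset K^c\) for all \(n\ge n_0\). Since also \(E_n\subset F\), this gives \(E_n\subset F\cap K^c\) for all \(n\ge n_0\). Monotonicity of \(\cQ_{\varepsilon_n}\) and of \(\log\) (with \(\log 0=-\infty\)) then yields
\[
\varepsilon_n\log \cQ_{\varepsilon_n}(F\cap K^c)\ge \varepsilon_n\log \cQ_{\varepsilon_n}(E_n)
\]
for all \(n\ge n_0\).

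Next I pass from the subsequence to the full limit. Since \(\varepsilon_n\downarrow0\), the limsup over \(\varepsilon\downarrow0\) dominates the limsup along \((\varepsilon_n)\). Discarding finitely many indices does not change a limsup, so hypothesis (ii) gives
\[
\limsup_{\varepsilon\downarrow0}\varepsilon\log\cQ_\varepsilon(F\cap K^c)
\ge\limsup_{n\to\infty}\varepsilon_n\log\cQ_{\varepsilon_n}(E_n)\ge -s.
\]
This bound holds for every compact \(K\), so taking the infimum over \(K\) in the definition of \(\beta(F)\) gives \(\beta(F)\ge -s\).

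For the final claim, suppose \(s<\inf_F I\); this includes the case \(\inf_F I=+\infty\) with \(s\) finite. Then \(\beta(F)\ge -s>-\inf_F I\). By the ``if and only if'' part of Proposition~\ref{prop:tail-criterion}, applied with \(\cF=F\) (which is legitimate because the compact-set upper bound is assumed), the closed-set upper bound on \(F\) fails.

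There is no real obstacle. The only points needing care are that the uniformity in (i) is needed only for each fixed \(K\) separately, and that the passage from the sequence \((\varepsilon_n)\) to the continuum limsup goes in the favourable direction.
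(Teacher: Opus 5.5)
Your proposal is correct and follows essentially the same route as the paper: fix a compact \(K\), use (i) to get \(E_n\subset F\cap K^c\) for large \(n\), pass from the sequence \((\varepsilon_n)\) to the limsup over \(\varepsilon\downarrow0\), take the infimum over \(K\), and then invoke the ``only if'' direction of Proposition~\ref{prop:tail-criterion}. Your added remarks, that the uniformity in (i) is needed only for each fixed \(K\) and that the case \(\inf_F I=+\infty\) is covered, are accurate.
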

\begin{proof}
Let us fix a compact set \(K\subset\cZ\). By \ref{item:escaping-obstruction-compacts}, \(E_n\subset F\cap K^c\) holds for all sufficiently large \(n\). It follows that
\begin{align*}
\limsup_{\varepsilon\downarrow0}\varepsilon\log \cQ_\varepsilon(F\cap K^c)
&\ge
\limsup_{n\to\infty}\varepsilon_n\log \cQ_{\varepsilon_n}(F\cap K^c)\\
&\ge
\limsup_{n\to\infty}\varepsilon_n\log \cQ_{\varepsilon_n}(E_n)\\
&\ge -s.
\end{align*}
It suffices to take the infimum over compact \(K\) to obtain \(\beta(F)\ge -s\). If \(s<\inf_F I\), then \(-s>-\inf_F I\), so \(\beta(F)>-\inf_F I\). The failure of the closed-set upper bound now follows immediately from Proposition~\ref{prop:tail-criterion}.
\end{proof}

\begin{remark}[Exponential-tightness upgrade]\label{rem:exp-tight-upgrade}
Under the assumptions of Proposition~\ref{prop:tail-criterion}, exponential tightness implies
\[
\limsup_{\varepsilon\downarrow0}\varepsilon\log \cQ_\varepsilon(\cF)\le -\inf_{\cF} I
\]
for every closed \(\cF\subset\cZ\) (see \cite[Lemma~1.2.18(a)]{DemboZeitouni}). In other terms, Proposition~\ref{prop:tail-criterion} gives \(\beta(\cF)=-\infty\) for all closed \(\cF\).
\end{remark}
Note that exponential tightness is impossible in the fixed-marginal noncompact regime that we consider here (see Proposition~\ref{prop:no-exp-tight-fixed-marg} below).

\begin{proposition}[Fixed noncompact marginals and exponential tightness]\label{prop:no-exp-tight-fixed-marg}
Let \((\cQ_\varepsilon)_{\varepsilon>0}\subset \Pi(\mu,\nu)\). If \(\supp(\mu)\) is noncompact or \(\supp(\nu)\) is noncompact, then
\[
\limsup_{\varepsilon\downarrow0}\varepsilon\log \cQ_\varepsilon(K^c)=0
\]
for every compact \(K\subset X\times Y\). In particular, \((\cQ_\varepsilon)\) is not exponentially tight at speed \(1/\varepsilon\).
\end{proposition}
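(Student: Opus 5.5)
Without loss of generality, suppose \(\supp(\mu)\) is noncompact; the case of \(\nu\) is symmetric. The plan is to use only the marginal constraint, which holds uniformly in \(\varepsilon\). Fix a compact \(K\subset X\times Y\) and let \(\pi_X(K)\subset X\) be its projection. Since the projection is continuous, \(\pi_X(K)\) is compact, hence closed. Because \(\supp(\mu)\) is not contained in any compact set, we can pick a point \(x_0\in\supp(\mu)\setminus\pi_X(K)\). As \(\pi_X(K)\) is closed, there is an open neighbourhood \(U\) of \(x_0\) with \(U\cap\pi_X(K)=\emptyset\). Since \(x_0\) lies in the support, \(\mu(U)>0\).

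The key step is the inclusion \(U\times Y\subset K^c\): any point of \(K\) has first coordinate in \(\pi_X(K)\), which is disjoint from \(U\). Because every \(\cQ_\varepsilon\) has first marginal \(\mu\), this gives
\[
\cQ_\varepsilon(K^c)\ \ge\ \cQ_\varepsilon(U\times Y)=\mu(U)=:c_K>0
\]
for every \(\varepsilon>0\). Hence \(\varepsilon\log\cQ_\varepsilon(K^c)\ge\varepsilon\log c_K\to0\) as \(\varepsilon\downarrow0\). Together with the trivial bound \(\varepsilon\log\cQ_\varepsilon(K^c)\le 0\), this shows that the limit exists and equals \(0\). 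In particular, the \(\limsup\) is \(0\).

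For the failure of exponential tightness, take \(M=1\). Any compact \(K_1\) satisfies \(\limsup_{\varepsilon\downarrow0}\varepsilon\log\cQ_\varepsilon(K_1^c)=0>-1\), so no compact set works and the definition of exponential tightness fails.

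The only point needing care is the existence of \(x_0\), namely that a noncompact support cannot be contained in the compact set \(\pi_X(K)\). Supports are closed, and a closed subset of a compact set is compact. So if \(\supp(\mu)\subset\pi_X(K)\), then \(\supp(\mu)\) would be compact, a contradiction. The rest of the argument is routine.
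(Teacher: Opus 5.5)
Your proposal is correct and follows essentially the same route as the paper: project \(K\) to the compact set \(\mathrm{proj}_1(K)\), use noncompactness of \(\supp(\mu)\) to find \(\mu\)-positive mass off it, and bound \(\cQ_\varepsilon(K^c)\) from below uniformly in \(\varepsilon\) via the first-marginal constraint. The only difference is cosmetic: you use a small open neighbourhood \(U\) of a support point, whereas the paper uses the whole complement \(\mathrm{proj}_1(K)^c\), whose \(\mu\)-mass must be positive since otherwise \(\supp(\mu)\subset \mathrm{proj}_1(K)\) would be compact.
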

\begin{proof}
Let us assume that \(\supp(\mu)\) is noncompact (the \(\nu\)-case is symmetric).
We fix compact \(K\subset X\times Y\), and set \(K_X:=\mathrm{proj}_1(K)\), which is compact in \(X\). If \(\mu(K_X^c)=0\), then \(\supp(\mu)\subset K_X\), which contradicts noncompactness of \(\supp(\mu)\), so \(\mu(K_X^c)>0\) follows.
Since every \(\cQ_\varepsilon\) has first marginal \(\mu\), we have
\[
\cQ_\varepsilon(K^c)\ge \cQ_\varepsilon(K_X^c\times Y)=\mu(K_X^c)>0.
\]
This implies \(\limsup_{\varepsilon\downarrow0}\varepsilon\log \cQ_\varepsilon(K^c)\ge 0\). The reverse inequality follows immediately from \(\cQ_\varepsilon(K^c)\le1\).
\end{proof}

\begin{remark}
The failure of exponential tightness for fixed marginals with noncompact support is already implicit in the literature. For the static problem, see \cite[Rem.~4.2(b)]{NutzWieselPotentials}, and consult \cite[Rem.~6]{KatoDynamic} for the dynamical Brownian setting.
\end{remark}

\begin{remark}
Note that failure of exponential tightness alone does not exclude a full LDP with a non-good rate function on a noncompact space. Indeed, if \(\cQ_\varepsilon\equiv \cQ\) for all \(\varepsilon>0\), then \((\cQ_\varepsilon)_{\varepsilon>0}\) satisfies a full LDP on \(\cZ\) with rate \(I=0\) on \(\supp(\cQ)\) and \(I=+\infty\) on \(\cZ\setminus\supp(\cQ)\). Proposition~\ref{prop:no-exp-tight-fixed-marg} only rules out the standard exponential-tightness upgrade from compact sets to closed sets. The ambient no-LDP theorem we prove below is strictly stronger.
\end{remark}

\section{A discrete counterexample}\label{sec:discrete}
We begin with a discrete model, which already shows the full tail mechanism behind the counterexample. In Section~\ref{sec:nonatomic}, we lift the construction to a nonatomic setting.

\begin{theorem}[A discrete counterexample to the closed-set upper bound]\label{thm:main-discrete}
There exist a countable discrete Polish space \(\Xd\), a probability measure \(\mu^{\mathrm d}\in\cP(\Xd)\), and a continuous cost \(C^{\mathrm d}:\Xd\times\Xd\to[0,\infty)\) for which the entropic minimisers \(P_\varepsilon^{\mathrm d}\in\Pi(\mu^{\mathrm d},\mu^{\mathrm d})\) satisfy the following properties.
\begin{enumerate}[label=\textup{(\roman*)},ref=\textup{(\roman*)},leftmargin=2.2em]
\item\label{item:main-discrete-tv}
The unregularised optimal transport problem has the unique minimiser \((P^{\mathrm d})^\star\), that is, the diagonal coupling. Moreover, \(P^{\mathrm d}_\varepsilon\to (P^{\mathrm d})^\star\) in total variation as \(\varepsilon\downarrow0\), and \(\Gamma^{\mathrm d}:=\supp((P^{\mathrm d})^\star)\) is noncompact.
\item\label{item:main-discrete-closed-set}
There exists a closed noncompact set $\Fd\subset \Xd\times\Xd$ such that $\inf_{\Fd} I_{\Gamma^{\mathrm d}}>0$ but
\[
\limsup_{\varepsilon\downarrow0}\ \varepsilon\log P^{\mathrm d}_\varepsilon(\Fd)\;>\;-\inf_{\Fd} I_{\Gamma^{\mathrm d}}.
\]
\end{enumerate}
\end{theorem}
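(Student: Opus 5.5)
The plan is to build \(\Xd\) from countably many finite blocks that escape to infinity. The cost is chosen so that the diagonal is the unique optimiser, while inside each block there are exponentially many "expensive'' off-diagonal pairs. These pairs individually have a large BGN rate, but together they carry mass on a smaller exponential scale.

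\textbf{Construction.} Block \(n\) consists of a hub \(h_n\) and leaves \(\ell_{n,1},\dots,\ell_{n,m_n}\), with \(m_n\asymp e^{\kappa n}\). The blocks are placed as isolated points, for instance at integers tending to \(+\infty\), so \(\Xd\) is a countable, discrete, closed Polish space and every function on \(\Xd\times\Xd\) is continuous. The cost \(\Cd\) is:
\begin{itemize}
\item \(0\) on the diagonal;
\item \(\delta\) on hub–leaf and leaf–hub pairs within a block;
\item \(b\) on distinct leaf–leaf pairs within a block;
\item a large constant \(B\) across blocks;
\end{itemize}
with \(0<2\delta<\kappa<b<B\). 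The marginal \(\mud\) puts mass \(w_n/(m_n+1)\) on each point of block \(n\), where \((w_n)\) is summable and decays only subexponentially (e.g.\ \(w_n\propto n^{-2}\)). Then \(\KL(\text{diag}\|\mud\otimes\mud)=\sum_x\mud(x)\log(1/\mud(x))<\infty\) because \(\log m_n\) is linear in \(n\), so the standing assumption holds with \(P_0\) the diagonal coupling. The closed set is \(\Fd:=\{(\ell_{n,i},\ell_{n,j}):n\ge1,\ i\neq j\}\). It is closed because \(\Xd\times\Xd\) is discrete, and it is noncompact because it is infinite.

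\textbf{Item (i) and the rate on \(\Fd\).} Every off-diagonal cost is strictly positive and the diagonal costs \(0\), so the diagonal coupling is the unique optimal plan. Hence \(\Gammad\) is the diagonal, which is noncompact. For the rate, apply Lemma~\ref{lem:I-two-point} at \((\ell_{n,i},\ell_{n,j})\in \Fd\) with \((x',y')=(h_n,h_n)\in\Gammad\). This gives \(\Id(\ell_{n,i},\ell_{n,j})\ge b+0-\delta-\delta=b-2\delta\), so \(\inf_{\Fd}\Id\ge b-2\delta>0\).

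For total variation convergence I would use the fact that \(P^{\mathrm d}_\varepsilon\) is of Schr\"odinger form, with symmetric potentials \(f_\varepsilon=g_\varepsilon\) by symmetry of \(\Cd\) and \(\mud\). Blockwise permutation symmetry, together with uniqueness of \(P^{\mathrm d}_\varepsilon\), forces the potential to be constant on the leaves of each block. The Schr\"odinger system therefore reduces, block by block, to a two-unknown system (leaf value, hub value) plus cross-block terms of order \(e^{-B/\varepsilon}\). From this reduced system one gets \(P^{\mathrm d}_\varepsilon(\text{off-diagonal part of block }n)\le w_n\,\eta_n(\varepsilon)\) with \(\eta_n(\varepsilon)\le 1\) and \(\eta_n(\varepsilon)\to0\) for each \(n\). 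Dominated convergence in \(n\) then gives \(\|P^{\mathrm d}_\varepsilon-\Pdstar\|_{\TV}\to0\).

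\textbf{Item (ii) via Proposition~\ref{prop:escaping-obstruction}.} Corollary~\ref{cor:BGN-full-limit} supplies the compact-set upper bound. Take \(\varepsilon_n:=1/n\) and let \(E_n\) be the leaf–leaf off-diagonal pairs in block \(n\). Since the blocks escape to infinity, condition (i) of Proposition~\ref{prop:escaping-obstruction} holds. The remaining work is the lower bound
\[
P^{\mathrm d}_{\varepsilon_n}(E_n)\gtrsim m_n^2\cdot\frac{w_n}{m_n}\,e^{-b/\varepsilon_n}\,e^{-o(1)/\varepsilon_n}.
\]
This would give \(\varepsilon_n\log P^{\mathrm d}_{\varepsilon_n}(E_n)\to-(b-\kappa)\), so Proposition~\ref{prop:escaping-obstruction} applies with \(s=b-\kappa<b-2\delta\le\inf_{\Fd}\Id\).

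\textbf{Main obstacle.} The hard step is this lower bound, i.e.\ controlling the leaf potentials uniformly in the block index when \(\varepsilon_n\) and \(m_n\) are coupled. Write \(a=e^{\varphi/\varepsilon}\) for the leaf potential and \(c=e^{\psi/\varepsilon}\) for the hub potential, and \(p=w_n/(m_n+1)\) for the point mass. Using the leaf and hub marginal equations with the symmetry reduction, the leaf equation reads
\[
p\,a\bigl(a+(m_n-1)a\,e^{-b/\varepsilon}+c\,e^{-\delta/\varepsilon}+\text{cross terms}\bigr)=1 .
\]
At \(\varepsilon=\varepsilon_n\) we have \(m_ne^{-b/\varepsilon_n}=e^{-(b-\kappa)n}\to0\). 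I would then show \(a^2p\to1\) up to factors \(e^{o(n)}\), and similarly for \(c\). Each leaf–leaf entry is \(p^2a^2e^{-b/\varepsilon}\approx p\,e^{-bn}\), so summing over the \(\sim m_n^2\) entries gives \(w_nm_ne^{-bn}=w_ne^{-(b-\kappa)n}\).

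The delicate points are two:
\begin{itemize}
\item Ruling out the hub absorbing the leaf mass. This needs \(m_ne^{-\delta/\varepsilon_n}\) to be handled, and it is where \(\kappa>2\delta\) competes with the hub edges. If necessary I would give the hub a heavier weight or adjust \(\delta\).
\item Bounding the cross-block terms uniformly in \(n\), using \(B\) large.
\end{itemize}
I would isolate these estimates in an appendix lemma, proved by a monotone comparison argument on the reduced two-variable system.
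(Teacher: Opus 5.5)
Your architecture matches the paper's: hub-and-leaf blocks, the two-point bound at \((h_n,h_n)\in\Gammad\) giving \(b-2\delta\), and escaping sets \(E_n\) at \(\varepsilon_n=1/n\) fed into Proposition~\ref{prop:escaping-obstruction}. The gap is that the step carrying the theorem, the lower bound on \(P^{\mathrm d}_{\varepsilon_n}(E_n)\), is only announced, and the route you sketch for it does not go through as stated. Your Schr\"odinger form with a common leaf potential already gives the exact identity (leaf--leaf entry) \(=\) (leaf diagonal entry)\(\cdot e^{-b/\varepsilon}\). So everything reduces to showing that the diagonal entry \(d_n\) at \(\ell_{n,1}\) is a fixed fraction of \(\mud(\ell_{n,1})\), via the row sum \(\mud(\ell_{n,1})=d_n\bigl(1+(m_n-1)e^{-b/\varepsilon_n}\bigr)+r_n+e_n\), where \(r_n\) is the leaf--hub entry and \(e_n\) the cross-block row mass. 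The hub part, which you flag as delicate and possibly needing reweighting, is trivial. By leaf symmetry the \(m_n\) leaf--hub entries are equal, so the hub's own marginal gives \(m_nr_n\le\mud(h_n)\), i.e.\ \(r_n\le\mud(\ell_{n,1})/m_n\). There is no competition between \(\kappa\) and \(2\delta\); that inequality is used only for \(b-\kappa<b-2\delta\).

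The real difficulty is the cross-block part, and your \emph{constant} cross cost \(B\) makes it hard. The ``cross terms of order \(e^{-B/\varepsilon}\)'' in your leaf equation are really \(e^{-B/\varepsilon}\sum_{y\notin B_n}f_\varepsilon(y)\mud(y)\). This sum involves the potentials of infinitely many blocks, including far blocks that are nowhere near diagonal at temperature \(\varepsilon\). So the system does not decouple block by block, and taking \(B\) large does not by itself bound the sum.

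The paper avoids this by letting the cross cost grow, \(L_n+L_m\) with \(L_n=n^4\). The energy bound \(\sum_{x,y}\Cd(x,y)P^{\mathrm d}_\varepsilon(x,y)\le\varepsilon H(\mud)\) then gives \(\Delta_n:=P^{\mathrm d}_{\varepsilon_n}(B_n\times B_n^c)\le\varepsilon_nH(\mud)/(2L_n)\). Leaf symmetry gives \(e_n\le\Delta_n/m_n\), and \(\varepsilon_n/(L_nw_n)\to0\) yields \(e_n=o(\mud(\ell_{n,1}))\). With constant \(B\) the same argument gives only \(e_n/\mud(\ell_{n,1})\le\varepsilon_nH(\mud)/(Bw_n)\), which diverges, e.g.\ for \(w_n\propto n^{-3}\).

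Constant \(B\) can be rescued, but you would have to supply the argument. Across blocks, the cross-ratio identity and swap symmetry give \(P(x,y)=e^{-B/\varepsilon}\sqrt{P(x,x)P(y,y)}\), and for leaves \(y\in B_m\) one has \(P(y,y)\le\mud(y)\min\{1,e^{b/\varepsilon}/(m_m-1)\}\). When \(\sum_m\sqrt{w_m}<\infty\), this gives \(e_n/\mud(\ell_{n,1})\le K e^{-(B-b/2)n}\mud(\ell_{n,1})^{-1/2}\to0\), since \(B>(b+\kappa)/2\).

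Separately, your example weights fail. With \(w_n\propto n^{-2}\) and \(\log m_n\sim\kappa n\), \(H(\mud)\ge\sum_nw_n\log(m_n+1)=\infty\), so the diagonal coupling has infinite relative entropy. The standing assumption survives only via \(\mud\otimes\mud\), because your cost is bounded, but the energy bound above is lost. You need \(\sum_n nw_n<\infty\), e.g.\ \(w_n\propto n^{-3}\) as in the paper. Total-variation convergence is then the one-line bound \(\|P^{\mathrm d}_\varepsilon-\Pdstar\|_{\TV}=P^{\mathrm d}_\varepsilon(\{x\ne y\})\le\varepsilon H(\mud)/\delta\), with no reduced system needed.
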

\begin{remark}
The stronger obstruction that we show later in Theorem~\ref{thm:intro-no-ambient} can already be noticed in the discrete case. We defer this discussion until after the nonatomic lift because the main point of the paper is that the pathology persists for the continuous cost and nonatomic marginals.
\end{remark}
\subsection{Construction of \texorpdfstring{$(\Xd,\mud,\Cd)$}{(Xd,mud,Cd)}}\label{subsec:discrete-construction}

We fix the parameters:
\begin{equation}\label{eq:params}
0<2a<\kappa<b.
\end{equation}
The relevant inequality is \(b-\kappa<b-2a\). Observe that the closed test set carries mass at exponential rate \(b-\kappa\), but the BGN rate on that set is bounded below by \(b-2a\). We further construct the following sequences
\begin{equation}\label{eq:concrete-seqs}
m_n:=\max\big\{2,\big\lfloor e^{\kappa n}\big\rfloor\big\},\qquad
L_n:=n^4,\qquad
w_n:=\frac{1}{\zeta(3)}\frac{1}{n^3},\qquad n\ge1.
\end{equation}
We verify the elementary asymptotic and integrability properties in Appendix~\ref{app:routine-checks}. In particular,
\[
\begin{gathered}
\sum_{n\ge1} w_n\log\frac{m_n+1}{w_n}<\infty,\qquad
L_n\to\infty,\\
m_n e^{-\kappa n}\to1,\qquad
\frac{\varepsilon_n}{L_n w_n}\to0
\end{gathered}
\]
for \(\varepsilon_n:=1/n\).
For each $n$, we define a finite block
\[
B_n:=\{(n,i): i\in\{0,1,\dots,m_n\}\},
\]
and set
\[
\Xd:=\bigsqcup_{n\ge 1} B_n
=\{(n,i): n\ge 1,\ i\in\{0,1,\dots,m_n\}\},
\]
which is equipped with the discrete metric $d(x,y):=\ind{x\neq y}$. Note that $\Xd$ is countable, $(\Xd,d)$ is separable and complete, hence Polish. We call \((n,0)\) \textit{the distinguished point} of \(B_n\), and \((n,1),\dots,(n,m_n)\) \textit{the high-multiplicity points}. Let us define a probability measure \(\mud\in\cP(\Xd)\) by
\begin{equation}\label{eq:mu}
\mud(\{(n,i)\})=\frac{w_n}{m_n+1},\qquad n\ge 1,\;0\le i\le m_n.
\end{equation}
It follows that \(\mud(B_n)=w_n\) and \(\sum_n w_n=1\).

We also define a symmetric cost \(\Cd:\Xd\times\Xd\to[0,\infty)\) by
\begin{equation}\label{eq:cost}
\begin{aligned}
&\Cd((n,i),(m,j))\\
&\quad :=
\begin{cases}
0, & n=m\text{ and }i=j,\\
a, & n=m\text{ and exactly one of }i,j\text{ equals }0,\\
b, & \substack{n=m,\ i\neq j,\\ i,j\in\{1,\dots,m_n\}},\\
L_n+L_m, & n\neq m.
\end{cases}
\end{aligned}
\end{equation}
Notice that \(\Xd\) is discrete, so \(\Cd\) is continuous. Finally, we define the test set
\begin{equation}\label{eq:Fdef}
\Fd:=\bigcup_{n\ge1}\Big\{((n,i),(n,j)): 1\le i\neq j\le m_n\Big\}\subset \Xd\times\Xd.
\end{equation}
This means that \(\Fd\) consists of the within-block off-diagonal pairs among the high-multiplicity points. Given that the topology is discrete, \(\Fd\) is clopen. It is also infinite, hence noncompact by Lemma~\ref{lem:compact-finite}. In the rest of Section~\ref{sec:discrete}, we write \(\Lambda^{\mathrm d}:=\mud\otimes\mud\).

\subsection{The discrete Schr\"odinger problem}\label{subsec:discrete-eot}

In Section~\ref{sec:discrete}, we use \(J_\varepsilon^{\mathrm d}\) to denote the Schr\"odinger functional for \((\Xd,\mud,\mud,\Cd)\) with reference measure \(\Lambda^{\mathrm d}:=\mud\otimes\mud\). Whenever it is convenient, we use
\[
\KL(P\|\Lambda^{\mathrm d})
=\sum_{x,y\in\Xd} P(x,y)\log\frac{P(x,y)}{\Lambda^{\mathrm d}(x,y)},
\qquad 0\log 0:=0.
\]
For each \(\varepsilon>0\), let \(\Ped\) be the minimiser of \(J_\varepsilon^{\mathrm d}\), and let \(\Pdstar(x,y):=\mud(x)\ind{x=y}\) denote the diagonal coupling. In Appendix~\ref{app:schroedinger}, we prove the existence and uniqueness of \(\Ped\), its factorisation and strict positivity, and the symmetry and cross-ratio identities that we use below.

\begin{lemma}[Assumptions]\label{lem:Hmu}
For the model of Subsection~\ref{subsec:discrete-construction}, \(\Xd\) is Polish, \(\mud\in\cP(\Xd)\), and \(\Cd\ge0\) is continuous.
We define
\[
H(\mud):=\sum_{x\in\Xd}\mud(x)\log\frac{1}{\mud(x)}.
\]
It follows that \(H(\mud)<\infty\), and for the diagonal coupling \(\Pdstar\) with \(\Lambda^{\mathrm d}:=\mud\otimes\mud\), we have
\[
\KL(\Pdstar\|\Lambda^{\mathrm d})=H(\mud),
\]
so \(J_\varepsilon^{\mathrm d}(\Pdstar)=\varepsilon H(\mud)<\infty\) holds for every \(\varepsilon>0\).
In particular, the finiteness assumptions from the general setup in Section~\ref{sec:intro} also hold for the discrete model.
\end{lemma}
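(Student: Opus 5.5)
The structural assertions are immediate and I would dispatch them first. \(\Xd\) is countable and carries the discrete metric, which is complete and separable, so it is Polish. \(\mud\) is a probability measure because \(\mud(B_n)=w_n\) and \(\sum_n w_n=1\) (recall \(w_n=\zeta(3)^{-1}n^{-3}\)). Every function on a discrete space is continuous, so \(\Cd\) is continuous. It is nonnegative because \(a,b,L_n\ge0\).

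Next comes the identity \(\KL(\Pdstar\|\Lambda^{\mathrm d})=H(\mud)\). This is a direct computation. \(\Pdstar\) is supported on the diagonal, and on the diagonal \(\Pdstar(x,x)=\mud(x)\) while \(\Lambda^{\mathrm d}(x,x)=\mud(x)^2\). Off the diagonal \(\Pdstar\) vanishes, so absolute continuity holds (note that \(\mud(x)>0\) for every \(x\)). Therefore
\[
\KL(\Pdstar\|\Lambda^{\mathrm d})=\sum_x \mud(x)\log\frac{\mud(x)}{\mud(x)^2}=\sum_x\mud(x)\log\frac{1}{\mud(x)}=H(\mud).
\]
Since \(\Cd=0\) on the diagonal, \(\int \Cd\,d\Pdstar=0\), and hence \(J^{\mathrm d}_\varepsilon(\Pdstar)=\varepsilon H(\mud)\).

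The main step is to show \(H(\mud)<\infty\). I would group the entropy by blocks. Each of the \(m_n+1\) points of \(B_n\) has mass \(w_n/(m_n+1)\), so
\[
H(\mud)=\sum_{n\ge1}(m_n+1)\cdot\frac{w_n}{m_n+1}\log\frac{m_n+1}{w_n}=\sum_{n\ge1} w_n\log\frac{m_n+1}{w_n}.
\]
This series is exactly the one stated to be finite after \eqref{eq:concrete-seqs} and verified in Appendix~\ref{app:routine-checks}. To check it directly, use \(m_n+1\le e^{\kappa n}+2\) and \(1/w_n=\zeta(3)n^3\). The general term is then \(O(n^{-3}(\kappa n+3\log n+O(1)))=O(n^{-2})\), which is summable. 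I regard this as the only step needing an estimate, and it is elementary.

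Finally, I would check the finiteness hypotheses of Section~\ref{sec:intro} with \(P_0:=\Pdstar\in\Pi(\mud,\mud)\). Both marginals of the diagonal coupling equal \(\mud\), and we have \(\int\Cd\,dP_0=0<\infty\) and \(\KL(P_0\|\mud\otimes\mud)=H(\mud)<\infty\). This completes the proof.
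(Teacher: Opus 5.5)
Your proposal is correct and follows essentially the same route as the paper: the direct diagonal computation of \(\KL(\Pdstar\|\Lambda^{\mathrm d})\), the block grouping \(H(\mud)=\sum_n w_n\log\frac{m_n+1}{w_n}\), and finiteness of that series, which the paper cites from Lemma~\ref{lem:Hmu-proof-app} and you verify inline with the same \(O(n^{-2})\) bound. Your explicit check of the standing assumptions with \(P_0:=\Pdstar\) spells out what the paper leaves implicit.
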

\begin{proof}
We established continuity and Polishness in Subsection~\ref{subsec:discrete-construction}. The identity follows immediately from \(\Lambda^{\mathrm d}(x,y)=\mud(x)\mud(y)\) and \(\Pdstar(x,y)=\mud(x)\ind{x=y}\).
Finally, we obtain
\[
H(\mud)=\sum_{n\ge1}\sum_{i=0}^{m_n}\frac{w_n}{m_n+1}\log\frac{m_n+1}{w_n}
=\sum_{n\ge1}w_n\log\frac{m_n+1}{w_n}<\infty
\]
by Lemma~\ref{lem:Hmu-proof-app}.
\end{proof}

\subsection{Block estimates}\label{subsec:block-estimate-statement}
For the special sequence \(\varepsilon_n:=1/n\), let us write
\[
\begin{aligned}
P_n^{\mathrm d}&:=P_{\varepsilon_n}^{\mathrm d},\\
F_n&:=\{((n,i),(n,j)):1\le i\neq j\le m_n\}\subset\Fd,\\
\Delta_n&:=P_n^{\mathrm d}(B_n\times B_n^c).
\end{aligned}
\]
For a fixed representative \(i=1\), we set
\[
\begin{aligned}
d_n&:=P_n^{\mathrm d}((n,1),(n,1)),\\
o_n&:=P_n^{\mathrm d}((n,1),(n,2)),\\
r_n&:=P_n^{\mathrm d}((n,1),(n,0)),\\
e_n&:=\sum_{y\notin B_n}P_n^{\mathrm d}((n,1),y).
\end{aligned}
\]
By Lemma~\ref{lem:o-vs-d-app}, we obtain \(o_n=d_n e^{-bn}\).

\begin{proposition}[Block estimates]\label{prop:block-estimate}
We use the same notation as above. Let us consider the following estimates.
\begin{enumerate}[label=\textup{(\alph*)},ref=\textup{(\alph*)},leftmargin=2.2em]
\item\label{item:block-estimate-delta}
For all \(n\),
\[
\Delta_n\le \frac{\varepsilon_n H(\mud)}{2L_n}.
\]
\item\label{item:block-estimate-diagonal}
For all sufficiently large \(n\),
\[
d_n\ge \frac14\,\mud(\{(n,1)\})=\frac{w_n}{4(m_n+1)}.
\]
\item\label{item:block-estimate-mass}
For all sufficiently large \(n\),
\[
P_n^{\mathrm d}(\Fd)\ge P_n^{\mathrm d}(F_n)=m_n(m_n-1)o_n\ge \frac{w_n}{16}\,e^{-(b-\kappa)n}.
\]
\end{enumerate}
We defer the proofs of these estimates to Appendix~\ref{app:block-bookkeeping}.
\end{proposition}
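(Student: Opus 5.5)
The three estimates follow from a global entropy comparison for \ref{item:block-estimate-delta}, mass conservation within row \((n,1)\) for \ref{item:block-estimate-diagonal}, and the cross-ratio identity of Lemma~\ref{lem:o-vs-d-app} for \ref{item:block-estimate-mass}.

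\emph{Estimate (a).} Compare \(\Ped\) with the diagonal coupling via optimality, \(J^{\mathrm d}_{\varepsilon}(\Ped)\le J^{\mathrm d}_\varepsilon(\Pdstar)=\varepsilon H(\mud)\) (Lemma~\ref{lem:Hmu}). Since \(\KL\ge0\) and \(\Cd\ge0\), this gives \(\int \Cd\,d\Ped\le \varepsilon H(\mud)\). Every pair in \(B_n\times B_n^c\) has cost \(L_n+L_m\ge L_n\). By the symmetry of \(\Ped\) (Appendix~\ref{app:schroedinger}), \(B_n^c\times B_n\) carries the same mass \(\Delta_n\). Hence \(2L_n\Delta_n\le\int\Cd\,dP^{\mathrm d}_n\le \varepsilon_n H(\mud)\), which is (a).

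\emph{Estimate (b).} Use the first-marginal constraint at \((n,1)\):
\[
\mud(\{(n,1)\})=d_n+r_n+(m_n-1)o_n+e_n .
\]
Bound each term other than \(d_n\).
\begin{itemize}
\item \(e_n\): by the permutation symmetry of block \(n\) among the indices \(1,\dots,m_n\) (uniqueness of the minimiser and invariance of cost and marginals), every high-multiplicity row carries the same escaping mass. So \(m_n e_n\le \Delta_n\le \varepsilon_n H/(2L_n)\). Because \(\varepsilon_n/(L_nw_n)\to0\), this is \(o(w_n/m_n)\), i.e.\ \(e_n=o(\mud(\{(n,1)\}))\).
\item \((m_n-1)o_n\): Lemma~\ref{lem:o-vs-d-app} gives \(o_n=d_ne^{-bn}\), so \((m_n-1)o_n\le m_ne^{-bn}d_n\le e^{-(b-\kappa)n}d_n\cdot(1+o(1))\). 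This is negligible relative to \(d_n\).
\item \(r_n\): the column \((n,0)\) has total mass \(\mud(\{(n,0)\})=w_n/(m_n+1)\). By symmetry \(P((n,i),(n,0))=r_n\) for all \(i\ge1\), so \(m_n r_n\le w_n/(m_n+1)\), i.e.\ \(r_n\le \mud(\{(n,1)\})/m_n\le \tfrac12\mud(\{(n,1)\})\) since \(m_n\ge2\).
\end{itemize}
Combining these, \(d_n(1+o(1))\ge \mud(\{(n,1)\})(1-\tfrac12-o(1))\), which gives \(d_n\ge \tfrac14\mud(\{(n,1)\})\) for large \(n\).

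\emph{Estimate (c).} The inclusion \(F_n\subset\Fd\) gives the first inequality. By the block symmetry, all \(m_n(m_n-1)\) off-diagonal high-multiplicity entries equal \(o_n\), giving the equality. Then
\[
m_n(m_n-1)o_n=m_n(m_n-1)e^{-bn}d_n\ge m_n(m_n-1)e^{-bn}\frac{w_n}{4(m_n+1)}.
\]
Using \((m_n-1)/(m_n+1)\ge\tfrac13\) and \(m_n\ge \tfrac34 e^{\kappa n}\) for large \(n\) (since \(m_ne^{-\kappa n}\to1\)), this is at least \(\tfrac{1}{16}w_ne^{-(b-\kappa)n}\).

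The main obstacle is the symmetry input used in (b) and (c): the minimiser must be invariant under permutations of \(\{1,\dots,m_n\}\) within block \(n\), so that \(e_n\), \(r_n\) and \(o_n\) are uniform across rows. I expect to derive this from uniqueness of \(\Ped\), since the cost and \(\mud\) are invariant under such permutations and the pushforward of the minimiser is again a minimiser. The other delicate point is checking that \(e_n\) is \(o(w_n/m_n)\) rather than merely \(o(w_n)\). This uses the per-row averaging \(m_ne_n\le\Delta_n\) together with \(\varepsilon_n/(L_nw_n)\to0\).
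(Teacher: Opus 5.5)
Your proposal is correct and follows the paper's proof in Appendix~\ref{app:block-bookkeeping} essentially step by step. You compare with the diagonal coupling for (a). For (b) you use the row-sum identity at \((n,1)\), with symmetry-based bounds on \(r_n\) and \(e_n\) and with \(o_n=d_ne^{-bn}\). For (c) you use the symmetric count of the \(m_n(m_n-1)\) equal off-diagonal entries. The differences are cosmetic: you get \(P_n^{\mathrm d}(B_n^c\times B_n)=\Delta_n\) from swap symmetry rather than the marginal identity, you bound \(r_n\) via the column sum at \((n,0)\) (avoiding the swap) with constant \(\tfrac12\) instead of \(\tfrac14\), and your constants \(\tfrac13,\tfrac34\) in (c), replacing \(\tfrac12,\tfrac12\), still give exactly \(\tfrac1{16}\).
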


\subsection{The unregularised minimiser and its support}\label{subsec:discrete-ot}

\begin{proposition}[Unique unregularised minimiser]\label{prop:OTunique}
The unique optimal transport plan for \((\mud,\mud)\) and cost \eqref{eq:cost} is the diagonal coupling \(\Pdstar\).
Its support
\[
\Gammad:=\supp(\Pdstar)=\{(x,x):x\in \Xd\}
\]
is noncompact.
\end{proposition}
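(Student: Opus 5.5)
The diagonal coupling costs nothing, so the plan is to show that any plan of zero cost must be diagonal, and then to identify the support and check noncompactness. First, since \(\Cd((n,i),(n,i))=0\), we have \(\int \Cd\,d\Pdstar=0\), and \(\Cd\ge0\) gives \(\int\Cd\,dP\ge0\) for every \(P\in\Pi(\mud,\mud)\). Hence \(\Pdstar\) is optimal and the optimal value is \(0\).

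For uniqueness, take any optimal \(P\in\Pi(\mud,\mud)\). Then \(\int\Cd\,dP=0\). We check from \eqref{eq:cost} that \(\Cd(x,y)>0\) whenever \(x\neq y\): the off-diagonal values are \(a>0\), \(b>0\) or \(L_n+L_m>0\), using \eqref{eq:params} and \(L_n=n^4\ge1\). Since \(\Xd\times\Xd\) is countable, \(P(x,y)\Cd(x,y)=0\) for every pair, so \(P\) is concentrated on the diagonal. A plan on the diagonal with first marginal \(\mud\) must satisfy \(P(x,x)=\mud(x)\), hence \(P=\Pdstar\).

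For the support, every atom \(\mud(\{(n,i)\})=w_n/(m_n+1)\) is strictly positive. In the discrete topology every singleton is open, so \(\supp(\Pdstar)\) is exactly the set of atoms of positive mass, namely the full diagonal \(\{(x,x):x\in\Xd\}\). This set is infinite because \(\Xd\) is infinite. In the discrete product topology it is closed, and it is an infinite discrete subspace, so it admits no finite subcover by singletons. It is therefore noncompact, which we get by invoking Lemma~\ref{lem:compact-finite} on compact subsets of a discrete space being finite.

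No step poses a real obstacle. The only point requiring care is verifying that the cost is strictly positive off the diagonal, which follows from \(a,b>0\) and \(L_n\ge1\).
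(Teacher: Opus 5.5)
Your proof is correct and follows essentially the same route as the paper: the diagonal coupling has zero cost, strict positivity of \(\Cd\) off the diagonal forces every zero-cost coupling onto the diagonal (hence onto \(\Pdstar\) by the marginal constraint), and noncompactness follows from Lemma~\ref{lem:compact-finite}. The paper uses the uniform bound \(C_0:=\inf_{x\neq y}\Cd(x,y)>0\), which it reuses in Proposition~\ref{prop:TV}, whereas your pointwise positivity is enough here; you also justify the identification of the support as the full diagonal more explicitly than the paper does.
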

\begin{proof}
We set
\[
C_0:=\inf\{\Cd(x,y):x,y\in\Xd,\ x\neq y\}>0.
\]
By construction, \(\Cd(x,y)=0\) holds if and only if \(x=y\).
For an arbitrary coupling \(P\in\Pi(\mud,\mud)\), we get
\[
\sum_{x,y} \Cd(x,y)P(x,y)\ge C_0\,P(\{x\neq y\})\ge0.
\]
Notice that the diagonal coupling \(\Pdstar\) has zero cost, so it is optimal. If a coupling has zero cost, it must satisfy \(P(\{x\neq y\})=0\), so \(P=\Pdstar\). This implies that the unregularised minimiser is unique. Finally, the map \(x\mapsto(x,x)\) is a homeomorphism from \(\Xd\) onto \(\Gammad\). Given that \(\Xd\) is infinite discrete,
it is also noncompact by Lemma~\ref{lem:compact-finite}, and so is \(\Gammad\).
\end{proof}

\begin{proposition}[Total variation convergence]\label{prop:TV}
As \(\varepsilon\downarrow0\), \(\Ped\to \Pdstar\) in total variation.
\end{proposition}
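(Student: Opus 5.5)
The argument is a cost bound: the entropic cost is at most \(\varepsilon H(\mud)\), and off-diagonal mass is penalised linearly by the cost gap \(C_0>0\).

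First, I would bound the cost of \(\Ped\). By minimality of \(\Ped\) and Lemma~\ref{lem:Hmu},
\[
\sum_{x,y}\Cd(x,y)\Ped(x,y)\le J^{\mathrm d}_\varepsilon(\Ped)\le J^{\mathrm d}_\varepsilon(\Pdstar)=\varepsilon H(\mud),
\]
using \(\KL\ge0\). Let \(C_0:=\min\{a,b\}>0\), which is the infimum of \(\Cd\) off the diagonal exactly as in the proof of Proposition~\ref{prop:OTunique}; note \(L_n+L_m\ge 2\ge\) nothing problematic, since \(L_n=n^4\ge1\), so \(C_0\ge\min\{a,b,2\}>0\). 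It follows that
\[
\Ped(\{x\neq y\})\le \frac{\varepsilon H(\mud)}{C_0}\longrightarrow0 .
\]

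Second, I would convert this into total variation. Both \(\Ped\) and \(\Pdstar\) lie in \(\Pi(\mud,\mud)\). For each \(x\), the diagonal mass satisfies
\[
\Ped(x,x)=\mud(x)-\sum_{y\neq x}\Ped(x,y)\le \mud(x)=\Pdstar(x,x).
\]
Hence
\[
\sum_{x,y}|\Ped(x,y)-\Pdstar(x,y)|=\sum_x\bigl(\mud(x)-\Ped(x,x)\bigr)+\Ped(\{x\ne y\})=2\,\Ped(\{x\neq y\}).
\]
Therefore \(\|\Ped-\Pdstar\|_{\TV}=\Ped(\{x\neq y\})\le \varepsilon H(\mud)/C_0\to0\). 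The convergence is along the full family \(\varepsilon\downarrow0\), not only along a subsequence.

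I do not expect a genuine obstacle. The only point to check is that \(C_0\) is strictly positive despite the infinitely many blocks. This holds because the off-diagonal costs take only the values \(a\), \(b\), or \(L_n+L_m\ge2\), so no block-dependent degeneration occurs. The finiteness \(H(\mud)<\infty\) is already supplied by Lemma~\ref{lem:Hmu}.
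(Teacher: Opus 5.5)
Your proof is correct and is essentially the paper's argument: you bound the transport cost of \(\Ped\) by \(\varepsilon H(\mud)\), use the positive off-diagonal cost gap to get \(\Ped(\{x\neq y\})\to0\), and then use the marginal constraint to identify \(\|\Ped-\Pdstar\|_{\TV}\) with \(\Ped(\{x\neq y\})\). One small slip: \(C_0:=\min\{a,b\}\) is not in general the off-diagonal infimum, because \(0<2a<\kappa<b\) allows \(a>17=L_1+L_2\), so take \(C_0:=\min\{a,2\}\) or the infimum itself, as your bound \(C_0\ge\min\{a,b,2\}\) already intends.
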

\begin{proof}
Let \(C_0>0\) be the constant from Proposition~\ref{prop:OTunique}, so that \(\Cd(x,y)\ge C_0\,\mathbf{1}_{\{x\neq y\}}\) for all \(x,y\in\Xd\). By optimality of \(\Ped\) and Lemma~\ref{lem:Hmu}, we obtain
\[
\sum_{x,y} \Cd(x,y)\Ped(x,y)
\le J_\varepsilon^{\mathrm d}(\Ped)\le J_\varepsilon^{\mathrm d}(\Pdstar)=\varepsilon H(\mud).
\]
It follows that
\[
C_0\,\Ped(\{x\neq y\})\le \varepsilon H(\mud),
\]
so \(\Ped(\{x\neq y\})\to0\) as \(\varepsilon\downarrow0\). For each \(x\in\Xd\), the marginal constraint gives
\[
\Ped(x,x)\le \sum_y \Ped(x,y)=\mud(x),
\]
and so,
\[
\big|\Ped(x,x)-\mud(x)\big|=\mud(x)-\Ped(x,x).
\]
Finally, we have
\begin{align*}
\|\Ped-\Pdstar\|_{\TV}
&=\frac12\sum_{x\neq y}\Ped(x,y)
  +\frac12\sum_x(\mud(x)-\Ped(x,x))\\
&=\Ped(\{x\neq y\})\to0,
\end{align*}
because \(\sum_x(\mud(x)-\Ped(x,x))=\sum_{x\neq y}\Ped(x,y)\) by the marginal constraint.
\end{proof}

\subsection{\texorpdfstring{The BGN rate on \(\Fd\)}{The BGN rate on Fd}}\label{subsec:discrete-rate}

From Proposition~\ref{prop:TV}, we obtain \(\Pdstar=\lim_{\varepsilon\downarrow0}\Ped\), so \(\Gammad=\supp(\Pdstar)\) and the BGN rate is \(\Id\). We need only the following lower bound.

\begin{lemma}[Uniform positivity of \(I_{\Gamma^{\mathrm d}}\) on \(\Fd\)]\label{lem:I-lower}
For every \((x,y)\in \Fd\), \(\Id(x,y)\ge b-2a\) holds. In particular, \(\inf_{\Fd} \Id\ge b-2a>0\).
\end{lemma}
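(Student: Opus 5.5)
We apply the two-point bound of Lemma~\ref{lem:I-two-point} with a well-chosen diagonal point of \(\Gammad\). The natural choice is the distinguished point of the same block. The cost structure \eqref{eq:cost} is designed so that routing through \((n,0)\) is cheaper than the direct off-diagonal pair.

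Fix \((x,y)\in\Fd\), so \(x=(n,i)\) and \(y=(n,j)\) for some \(n\ge1\) and \(1\le i\neq j\le m_n\). By Proposition~\ref{prop:OTunique}, \(\Gammad=\{(z,z):z\in\Xd\}\), so \((x',y'):=((n,0),(n,0))\in\Gammad\). Lemma~\ref{lem:I-two-point} gives
\[
\Id(x,y)\ \ge\ \Cd(x,y)+\Cd(x',y')-\Cd(x,y')-\Cd(x',y).
\]
We read each term off \eqref{eq:cost}:
\begin{itemize}
\item \(\Cd((n,i),(n,j))=b\), since \(i\neq j\) are both high-multiplicity indices;
\item \(\Cd((n,0),(n,0))=0\);
\item \(\Cd((n,i),(n,0))=a\) and \(\Cd((n,0),(n,j))=a\), since exactly one index equals \(0\).
\end{itemize}
Hence \(\Id(x,y)\ge b+0-a-a=b-2a\).

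Since \((x,y)\in\Fd\) was arbitrary, \(\inf_{\Fd}\Id\ge b-2a\). Finally, the parameter choice \eqref{eq:params}, \(0<2a<\kappa<b\), gives \(b-2a>b-\kappa>0\), so the infimum is strictly positive.

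The argument is purely combinatorial and uses only the \(k=2\) term of \eqref{eq:BGNrate-prelim}, so no step is a real obstacle. The only point to verify is that the intermediate diagonal pair actually lies in \(\Gammad\), which Proposition~\ref{prop:OTunique} supplies.
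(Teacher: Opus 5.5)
Your proof is correct and follows the paper's own argument. It applies the two-point bound of Lemma~\ref{lem:I-two-point} with the diagonal pair \(((n,0),(n,0))\in\Gammad\), reads off the costs \(b,0,a,a\) from \eqref{eq:cost}, and gets positivity of \(b-2a\) from \eqref{eq:params}.
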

\begin{proof}
We fix \((x,y)=((n,i),(n,j))\in \Fd\) with \(i\neq j\). In Lemma~\ref{lem:I-two-point}, applied to \((X,Y,C,\Gamma)=(\Xd,\Xd,\Cd,\Gammad)\), we choose
\[
(x',y')=((n,0),(n,0))\in\Gammad.
\]
By \eqref{eq:cost}, we have \(\Cd(x,y)=b\), \(\Cd(x,(n,0))=a\), \(\Cd((n,0),y)=a\), and \(\Cd((n,0),(n,0))=0\). The two-point expression is \(b-2a\), so \(\Id(x,y)\ge b-2a\). Positivity follows immediately from \eqref{eq:params}.
\end{proof}
Moreover, for every \((x,y)\in \Fd\), we also have \(I_{\Gamma^{\mathrm d}}(x,y)\le \Cd(x,y)=b\), because in the definition \eqref{eq:BGNrate-prelim} all terms \(\Cd(x_i,y_i)\) with \(i\ge2\) vanish on \(\Gamma^{\mathrm d}\), but the permuted costs are nonnegative. We conclude that
\[
b-2a\le \inf_{\Fd} I_{\Gamma^{\mathrm d}}\le b<\infty.
\]

\subsection{Proof of the main discrete result}\label{subsec:discrete-proof}

\begin{proof}[Proof of Theorem~\ref{thm:main-discrete}]
The assertion in \ref{item:main-discrete-tv} follows directly from Proposition~\ref{prop:OTunique} and Proposition~\ref{prop:TV}. For \ref{item:main-discrete-closed-set}, by Lemma~\ref{lem:I-lower}, we obtain
\[
\inf_{\Fd} I_{\Gamma^{\mathrm d}}\ge b-2a>0.
\]
Notice that every compact subset of \(\Xd\times\Xd\) is finite by Lemma~\ref{lem:compact-finite}, so the compact sets \(F_n\subset\Fd\) escape every compact set. The estimate in \ref{item:block-estimate-mass} yields, for \(\varepsilon_n:=1/n\),
\[
P^{\mathrm d}_{\varepsilon_n}(F_n)\ge \frac{w_n}{16}\,e^{-(b-\kappa)n}
\]
for all sufficiently large \(n\). We have \(n^{-1}\log(w_n/16)\to0\) by Lemma~\ref{lem:Hmu-proof-app}, so
\[
\limsup_{n\to\infty}\varepsilon_n\log P^{\mathrm d}_{\varepsilon_n}(F_n)\ge -(b-\kappa).
\]
By Corollary~\ref{cor:BGN-full-limit}, we obtain the compact-set upper bound with rate \(I_{\Gamma^{\mathrm d}}\). We now apply Proposition~\ref{prop:escaping-obstruction} to \(F=\Fd\), \(E_n=F_n\), and \(s=b-\kappa\), from which
\[
\beta(\Fd)\ge -(b-\kappa)
\]
follows. Because \(b-\kappa<b-2a\le \inf_{\Fd} I_{\Gamma^{\mathrm d}}\), Proposition~\ref{prop:tail-criterion} shows that
\[
\limsup_{\varepsilon\downarrow0}\varepsilon\log P^{\mathrm d}_\varepsilon(\Fd)>-\inf_{\Fd} I_{\Gamma^{\mathrm d}}.
\]
\end{proof}

\section{A nonatomic lift of the discrete model}\label{sec:nonatomic}
We now lift the discrete model to a nonatomic Polish space. To this end, we replace each label by a compact interval that is clopen in the relative topology of \(X\), and we take the constant cost on label rectangles. This procedure allows us to reduce the continuous Schr\"odinger problem exactly to the label problem.

\subsection{Intervals in the discrete model}

We reuse the sequences $(m_n)_{n\ge1}$, $(L_n)_{n\ge1}$, and $(w_n)_{n\ge1}$ and the parameters $0<2a<\kappa<b$ from Section~\ref{sec:discrete}.
Let $(M_n)_{n\ge1}$ be defined recursively by
\[
M_1:=0,\qquad M_{n+1}:=M_n+3(m_n+1).
\]
For each $n\ge1$ and $i\in\{0,1,\dots,m_n\}$, define the closed unit interval
\[
I_{n,i}:=[M_n+3i,\ M_n+3i+1]\subset\R.
\]
We set
\[
X:=\bigcup_{n\ge1}\ \bigcup_{i=0}^{m_n} I_{n,i}\subset\R,
\]
equipped with the Euclidean metric restricted to $X$.

\begin{lemma}[Polishness and clopen components]\label{lem:X-Polish-clopen-cont}
$X$ is a closed subset of $\R$ and thus Polish. Moreover, each $I_{n,i}$ is clopen in $X$.
\end{lemma}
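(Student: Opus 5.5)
The plan is to exploit the fact that the intervals \(I_{n,i}\) are pairwise separated by gaps of length at least \(2\) and that their left endpoints tend to \(+\infty\). Everything then follows from a uniform positive distance between distinct intervals, together with local finiteness of the family.

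\emph{Gaps.} By construction, within block \(n\) the interval \(I_{n,i}\) ends at \(M_n+3i+1\) and the next one, \(I_{n,i+1}\), starts at \(M_n+3i+3\). The last interval of block \(n\) ends at \(M_n+3m_n+1\), while the first interval of block \(n+1\) starts at \(M_{n+1}=M_n+3m_n+3\). Hence, listing all intervals in increasing order, consecutive ones are separated by open gaps of length exactly \(2\). In particular, \(\dist(I_{n,i},I_{n',i'})\ge 2\) whenever \((n,i)\neq(n',i')\).

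\emph{Closedness.} Let \(x_k\in X\) with \(x_k\to x\in\R\). Since the sequence \((x_k)\) is bounded, and since \(M_n\ge 3(n-1)\to\infty\), only finitely many intervals meet a bounded neighbourhood of \(x\). A finite union of closed intervals is closed, so \(x\in X\). Alternatively, we can argue as follows. Because consecutive points of a convergent sequence eventually lie within distance \(<2\) of one another, the tail of \((x_k)\) lies in a single interval \(I_{n,i}\), which is closed, so \(x\in I_{n,i}\subset X\). As a closed subset of the Polish space \(\R\), the set \(X\) is Polish, either because it is complete and separable or by the standard fact that closed subspaces of Polish spaces are Polish.

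\emph{Clopen components.} Each \(I_{n,i}\) is compact, hence closed in \(X\). For openness, we have
\[
I_{n,i}=X\cap (M_n+3i-1,\ M_n+3i+2),
\]
since this open interval of \(\R\) meets no other \(I_{n',i'}\) by the gap computation above. Thus \(I_{n,i}\) is relatively open in \(X\).

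There is no real obstacle here. The only point requiring care is the bookkeeping at block boundaries, namely checking that \(M_{n+1}\) leaves a gap of length \(2\) after \(I_{n,m_n}\); this follows directly from \(M_{n+1}=M_n+3(m_n+1)\).
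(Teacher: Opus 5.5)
Your proof is correct and takes essentially the same route as the paper: the gap computation giving \(\dist(I_{n,i},I_{n',i'})\ge 2\), local finiteness of the family for closedness (your Cauchy-tail variant is an equally valid alternative), and the same open interval \((M_n+3i-1,\,M_n+3i+2)\) to show that each \(I_{n,i}\) is relatively open.
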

\begin{proof}
From $m_n\ge2$, we have $M_{n+1}-M_n=3(m_n+1)\ge9$, so $M_n\to+\infty$.
Let $(x_k)\subset X$ converge in $\R$ to $x$. It follows that $(x_k)$ is bounded, say \(x_k\in[-R,R]\) for all \(k\). Let us choose \(N\) with \(M_N>R\). For \(n\ge N\) and an arbitrary \(i\), the interval \(I_{n,i}\subset[M_n,\infty)\subset(R,\infty)\), so \(I_{n,i}\cap[-R,R]=\emptyset\). We infer that \([-R,R]\) meets only finitely many intervals \(I_{n,i}\). We now pass to a subsequence, and assume that \(x_k\in I_{n,i}\) for a fixed pair \((n,i)\). Given that \(I_{n,i}\) is closed in \(\R\), it follows that \(x\in I_{n,i}\subset X\). We conclude that \(X\) is closed in \(\R\), and thus Polish.

Let us now fix \((n,i)\). For any other \((m,j)\neq(n,i)\), the construction gives
\[
\dist(I_{n,i},I_{m,j})\ge2.
\]
Indeed, within a fixed block, consecutive intervals have left endpoints that are separated by \(3\) and length \(1\), so the gap is \(2\). Between successive blocks, the last interval in block \(n\) ends at \(M_n+3m_n+1=M_{n+1}-2\), but the first interval in block \(n+1\) begins at \(M_{n+1}\), so the gap is again \(2\). It follows that \(\dist(I_{n,i},X\setminus I_{n,i})\ge2\), and the open interval
\[
U:=(M_n+3i-1,\ M_n+3i+2)
\]
satisfies \(U\cap X=I_{n,i}\). We deduce that \(I_{n,i}\) is open in \(X\). It is also closed in \(X\), since \(I_{n,i}=X\cap I_{n,i}\) and \(I_{n,i}\) is closed in \(\R\).
\end{proof}

Let $\mathcal{L}^1$ denote Lebesgue measure on $\R$. For each \((n,i)\), let \(\lambda_{n,i}\) denote the Lebesgue measure restricted to \(I_{n,i}\). Since \(|I_{n,i}|=1\), this is a probability measure on \(I_{n,i}\), hence nonatomic. We define $\mu\in\cP(X)$ by
\begin{equation}\label{eq:mu-cont}
\mu:=\sum_{n\ge1}\ \sum_{i=0}^{m_n} \frac{w_n}{m_n+1}\,\lambda_{n,i}.
\end{equation}
It follows that $\mu(I_{n,i})=w_n/(m_n+1)$ and $\sum_n w_n=1$. In particular, observe that $\mu$ is nonatomic. We also define the label map \(\ell:X\to\Xd\) by \(\ell(x)=(n,i)\) if and only if \(x\in I_{n,i}\). For \(\alpha=(n,i)\in\Xd\), we write \(I_\alpha:=I_{n,i}\) and \(\lambda_\alpha:=\lambda_{n,i}\). We then have \(\ell_\#\mu=\mud\). Let us now define
\begin{equation}\label{eq:cost-cont}
C(x,y):=\Cd(\ell(x),\ell(y)),\qquad x,y\in X.
\end{equation}
Because each \(I_\alpha\) is clopen in \(X\) by Lemma~\ref{lem:X-Polish-clopen-cont}, every rectangle \(I_\alpha\times I_\beta\) is clopen in \(X\times X\), and \(C\) is constant on each such rectangle. In particular, \(C\) is continuous.

\subsection{Reduction to the label problem}

In this part, \(\Lambda:=\mu\otimes\mu\), and \(J_\varepsilon\) denotes the Schr\"odinger functional on \(\Pi(\mu,\mu)\) with cost \(C\). We define the block-diagonal coupling as follows:
\begin{equation}\label{eq:Pstar-cont}
P^\star:=\sum_{\alpha\in\Xd} \mud(\alpha)\,(\lambda_\alpha\otimes\lambda_\alpha)\ \in\cP(X\times X).
\end{equation}

\begin{lemma}[Finite entropy and optimality of $P^\star$]\label{lem:Pstar-finite-cont}
\(P^\star\in\Pi(\mu,\mu)\), \(\int C\,dP^\star=0\), so \(P^\star\) is an optimal transport plan for \((\mu,\mu,C)\). Moreover, we have
\[
\KL(P^\star\|\Lambda)=\sum_{\alpha\in\Xd} \mud(\alpha)\log\frac{1}{\mud(\alpha)}<\infty.
\]
In particular, \(J_\varepsilon(P^\star)<\infty\) for all \(\varepsilon>0\), so the continuous Schr\"odinger problem is well posed.
\end{lemma}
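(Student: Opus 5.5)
The plan is to check the three claims of Lemma~\ref{lem:Pstar-finite-cont} in order: the marginals, the cost, and then the entropy via an explicit density.

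\emph{Marginals.} For a Borel set \(A\subset X\), each \(\lambda_\alpha\) is a probability measure, so
\[
P^\star(A\times X)=\sum_{\alpha}\mud(\alpha)\lambda_\alpha(A)=\sum_{n}\sum_{i}\frac{w_n}{m_n+1}\lambda_{n,i}(A)=\mu(A)
\]
by \eqref{eq:mu-cont}. By symmetry the same holds for the second marginal. Total mass is \(\sum_\alpha\mud(\alpha)=1\), so \(P^\star\in\Pi(\mu,\mu)\).

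\emph{Cost and optimality.} The measure \(P^\star\) is concentrated on \(\bigcup_\alpha I_\alpha\times I_\alpha\). There \(\ell(x)=\ell(y)=\alpha\), so \(C(x,y)=\Cd(\alpha,\alpha)=0\) by \eqref{eq:cost}. Hence \(\int C\,dP^\star=0\). Since \(C\ge0\), \(P^\star\) is an optimal transport plan.

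\emph{Entropy.} I would compute the density of \(P^\star\) with respect to \(\Lambda\). On \(I_\alpha\times I_\beta\) we have \(\Lambda=\mud(\alpha)\mud(\beta)\,\lambda_\alpha\otimes\lambda_\beta\), and these rectangles partition \(X\times X\). Therefore
\[
\frac{dP^\star}{d\Lambda}(x,y)=\frac{\ind{\ell(x)=\ell(y)}}{\mud(\ell(x))},
\]
and in particular \(P^\star\ll\Lambda\). Integrating \(\log\) of this density against \(P^\star\) gives
\[
\KL(P^\star\|\Lambda)=\sum_\alpha \mud(\alpha)\log\frac{1}{\mud(\alpha)}=H(\mud).
\]
The integrand is nonnegative because \(\mud(\alpha)\le1\), so summing over rectangles needs no cancellation argument. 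Finiteness follows from Lemma~\ref{lem:Hmu}. Then \(J_\varepsilon(P^\star)=0+\varepsilon H(\mud)<\infty\), which verifies the standing assumption of Section~\ref{sec:intro} with \(P_0=P^\star\).

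\emph{Main obstacle.} The only step needing care is the density identification. I would verify it on the \(\pi\)-system of measurable rectangles \(A\times B\), using the clopen decomposition from Lemma~\ref{lem:X-Polish-clopen-cont}. Everything else is direct computation.
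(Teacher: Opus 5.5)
Your proposal is correct and follows essentially the same route as the paper: the marginals come directly from \eqref{eq:mu-cont} and \eqref{eq:Pstar-cont}, the cost vanishes on the diagonal rectangles $I_\alpha\times I_\alpha$, and the density $dP^\star/d\Lambda=1/\mud(\alpha)$ there gives $\KL(P^\star\|\Lambda)=H(\mud)<\infty$ by Lemma~\ref{lem:Hmu}. Your remark that the integrand $\log(1/\mud(\alpha))$ is nonnegative, which justifies summing over rectangles, is a small extra piece of care that the paper leaves implicit.
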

\begin{proof}
The marginal identities \((\mathrm{proj}_1)_\#P^\star=(\mathrm{proj}_2)_\#P^\star=\mu\) follow immediately from \eqref{eq:mu-cont} and \eqref{eq:Pstar-cont}, so \(P^\star\in\Pi(\mu,\mu)\). Since \(C\ge0\) on \(X\times X\) and vanishes whenever \(\ell(x)=\ell(y)\), we also have \(\int C\,dP^\star=0\). We conclude that \(P^\star\) is optimal for the unregularised problem.

For $\alpha\in\Xd$, we set $R_\alpha:=I_\alpha\times I_\alpha$. The rectangles $(R_\alpha)_{\alpha\in\Xd}$ are pairwise disjoint, and by definition of $P^\star$, it follows that
\[
P^\star|_{R_\alpha}=\mud(\alpha)\,(\lambda_\alpha\otimes\lambda_\alpha),
\qquad
P^\star(R_\alpha)=\mud(\alpha).
\]
Moreover, since $\mu|_{I_\alpha}=\mud(\alpha)\lambda_\alpha$, we have
\[
\Lambda|_{R_\alpha}=(\mu\otimes\mu)|_{R_\alpha}
=\mud(\alpha)^2\,(\lambda_\alpha\otimes\lambda_\alpha).
\]
We obtain \(P^\star\ll\Lambda\), and on \(R_\alpha\),
\[
\frac{dP^\star}{d\Lambda}=\frac{1}{\mud(\alpha)}.
\]
Therefore,
\[
\KL(P^\star\|\Lambda)
=\sum_{\alpha\in\Xd}\int_{R_\alpha}\log\Big(\frac{dP^\star}{d\Lambda}\Big)\,dP^\star
=\sum_{\alpha\in\Xd}\mud(\alpha)\log\frac{1}{\mud(\alpha)}.
\]
Note that the series is finite by Lemma~\ref{lem:Hmu-proof-app}, so \(J_\varepsilon(P^\star)<\infty\) for all \(\varepsilon>0\).
\end{proof}

\begin{remark}
Unlike the discrete label problem, the unregularised transport problem for \((\mu,\mu,C)\) is not unique. Indeed, observe that for an arbitrary family \((\rho_\alpha)_{\alpha\in\Xd}\) with \(\rho_\alpha\in\Pi(\lambda_\alpha,\lambda_\alpha)\),
\[
\widetilde P:=\sum_{\alpha\in\Xd}\mud(\alpha)\rho_\alpha
\]
belongs to \(\Pi(\mu,\mu)\) and satisfies \(\int C\,d\widetilde P=0\). The important point here is that the entropic minimisers select the particular block-product plan \(P^\star\).
\end{remark}

Among zero-cost couplings, only the relative-entropy term makes a distinction between competitors. In the next lemma, we show the entropy chain rule along the partition \((I_\alpha\times I_\beta)_{\alpha,\beta\in\Xd}\), and then construct Proposition~\ref{prop:coarse-principle} to reduce the continuous problem exactly to the discrete label model.

Let \(\widehat\Pi(\mud,\mud)\) denote the set of Borel probability measures on \(\Xd\times\Xd\) with both marginals equal to \(\mud\). We define the discrete entropic functional on $\widehat\Pi(\mud,\mud)$ by
\[
\widehat J_\varepsilon(\widehat P)
:=\sum_{\alpha,\beta\in\Xd} \Cd(\alpha,\beta)\,\widehat P(\alpha,\beta)
+\varepsilon\,\KL(\widehat P\|\Lambda^{\mathrm d}).
\]
Let \(\widehat P_\varepsilon\) denote the unique minimiser of \(\widehat J_\varepsilon\). Note that existence and uniqueness follow directly from Lemma~\ref{lem:Hmu} and Proposition~\ref{prop:exist-unique}. Because the label problem \((\Xd,\mud,\mud,\Cd)\) is precisely the discrete model from Section~\ref{sec:discrete}, we identify
\[
\widehat P_\varepsilon=P^{\mathrm d}_\varepsilon,\qquad \varepsilon>0,
\]
and apply the estimates we have already established. We write
\begin{equation}\label{eq:Phat-star}
\widehat P^\star(\alpha,\beta):=\mud(\alpha)\mathbf{1}_{\{\alpha=\beta\}},
\qquad \alpha,\beta\in\Xd,
\end{equation}
for the diagonal coupling on the label space. Given $\widehat P\in\widehat\Pi(\mud,\mud)$, we define its lift $\mathcal L(\widehat P)\in\Pi(\mu,\mu)$ by
\begin{equation}\label{eq:lift-cont}
\mathcal L(\widehat P):=\sum_{\alpha,\beta\in\Xd} \widehat P(\alpha,\beta)\,(\lambda_\alpha\otimes\lambda_\beta).
\end{equation}
For a given $P\in\Pi(\mu,\mu)$, we define its coarse-graining $\widehat P\in\widehat\Pi(\mud,\mud)$ by
\begin{equation}\label{eq:pushforward-cont}
\widehat P:= (\ell\times\ell)_\# P,
\qquad\text{i.e.}\qquad
\widehat P(\alpha,\beta)=P(I_\alpha\times I_\beta).
\end{equation}

\begin{lemma}[Coarse-graining of cost and relative entropy]\label{lem:KL-chain-cont}
Let $P\in\Pi(\mu,\mu)$ and let $\widehat P=(\ell\times\ell)_\#P$. It follows that
\[
\int_{X\times X} C\,dP=\sum_{\alpha,\beta\in\Xd} \Cd(\alpha,\beta)\,\widehat P(\alpha,\beta).
\]
Moreover, $\KL(P\|\Lambda)\ge \KL(\widehat P\|\Lambda^{\mathrm d})$.
If $\KL(P\|\Lambda)<\infty$, then $P\ll\Lambda$ and
\[
\KL(P\|\Lambda)
=
\KL(\widehat P\|\Lambda^{\mathrm d})
+\sum_{\substack{\alpha,\beta\in\Xd\\ \widehat P(\alpha,\beta)>0}}
\widehat P(\alpha,\beta)\,
\KL\!\big(P^{\alpha\beta}\,\big\|\,\lambda_\alpha\otimes\lambda_\beta\big),
\]
where, for each \((\alpha,\beta)\) with \(\widehat P(\alpha,\beta)>0\),
\(P^{\alpha\beta}\) denotes the conditional probability of \(P\) on
\(I_\alpha\times I_\beta\). In that case, equality
\(\KL(P\|\Lambda)=\KL(\widehat P\|\Lambda^{\mathrm d})\) holds if and only if
\(P^{\alpha\beta}=\lambda_\alpha\otimes\lambda_\beta\) for every
\((\alpha,\beta)\) with \(\widehat P(\alpha,\beta)>0\).
\end{lemma}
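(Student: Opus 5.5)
The cost identity is immediate. The rectangles \(R_{\alpha\beta}:=I_\alpha\times I_\beta\), \(\alpha,\beta\in\Xd\), form a countable Borel partition of \(X\times X\), and \(C\equiv\Cd(\alpha,\beta)\) on \(R_{\alpha\beta}\). Since \(C\ge0\), monotone convergence gives
\[
\int C\,dP=\sum_{\alpha,\beta}\Cd(\alpha,\beta)\,P(R_{\alpha\beta})=\sum_{\alpha,\beta}\Cd(\alpha,\beta)\,\widehat P(\alpha,\beta).
\]
For the inequality \(\KL(P\|\Lambda)\ge\KL(\widehat P\|\Lambda^{\mathrm d})\), I will invoke the data-processing inequality for relative entropy under the measurable map \(\ell\times\ell\). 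I first note that \((\ell\times\ell)_\#\Lambda=\mud\otimes\mud=\Lambda^{\mathrm d}\), since \(\ell_\#\mu=\mud\). If \(\KL(P\|\Lambda)=\infty\) there is nothing to prove, so the real content lies in the finite case.

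Assume now \(\KL(P\|\Lambda)<\infty\). Then \(P\ll\Lambda\) by definition; set \(f:=dP/d\Lambda\). On \(R_{\alpha\beta}\) we have \(\Lambda|_{R_{\alpha\beta}}=\Lambda^{\mathrm d}(\alpha,\beta)\,(\lambda_\alpha\otimes\lambda_\beta)\), with \(\Lambda^{\mathrm d}(\alpha,\beta)=\mud(\alpha)\mud(\beta)>0\). Hence for \(\widehat P(\alpha,\beta)>0\) the conditional law \(P^{\alpha\beta}:=P(\cdot\cap R_{\alpha\beta})/\widehat P(\alpha,\beta)\) is absolutely continuous with respect to \(\lambda_\alpha\otimes\lambda_\beta\), with density
\[
g_{\alpha\beta}=f\,\frac{\Lambda^{\mathrm d}(\alpha,\beta)}{\widehat P(\alpha,\beta)}\quad\text{on }R_{\alpha\beta}.
\]
Thus on each such rectangle
\[
\log f=\log\frac{\widehat P(\alpha,\beta)}{\Lambda^{\mathrm d}(\alpha,\beta)}+\log g_{\alpha\beta}.
\]
Rectangles with \(\widehat P(\alpha,\beta)=0\) are \(P\)-null and contribute nothing. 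Integrating this identity against \(P\) over \(R_{\alpha\beta}\) and summing over \((\alpha,\beta)\) formally yields the stated chain rule.

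The main obstacle is justifying this summation, because \(f\log f\) takes both signs and the series may not be absolutely convergent a priori. I would handle it via the negative parts. Since \(x\log x\ge-1/e\) and \(\Lambda\) is a probability measure, \((f\log f)^-\) is \(\Lambda\)-integrable, so \(\int f\log f\,d\Lambda\) splits over the partition as a countable sum of well-defined terms. Each term decomposes into two pieces:
\[
\widehat P(\alpha,\beta)\log\frac{\widehat P(\alpha,\beta)}{\Lambda^{\mathrm d}(\alpha,\beta)}
\quad\text{and}\quad
\widehat P(\alpha,\beta)\,\KL(P^{\alpha\beta}\|\lambda_\alpha\otimes\lambda_\beta)\ge0.
\]
The first family is bounded below in sum by the same \(x\log x\ge-1/e\) argument applied on the discrete space. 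The second family is nonnegative. So both sums are well defined in \((-\infty,\infty]\), and summing the decomposed terms by rearranging nonnegative and lower-bounded series is legitimate. This gives the chain rule with both pieces finite. Alternatively, I could cite the standard disintegration chain rule for relative entropy directly.

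Nonnegativity of the conditional KL terms then gives the inequality in the finite case as well, consistent with data processing. For the equality statement: since each summand \(\widehat P(\alpha,\beta)\KL(P^{\alpha\beta}\|\lambda_\alpha\otimes\lambda_\beta)\) is nonnegative, equality holds if and only if every term with \(\widehat P(\alpha,\beta)>0\) vanishes. By Gibbs' inequality, this happens if and only if \(P^{\alpha\beta}=\lambda_\alpha\otimes\lambda_\beta\) for each such pair.
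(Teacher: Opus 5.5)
Your proposal is correct and follows essentially the same route as the paper: data processing under $\ell\times\ell$ for the coarse-grained inequality, then the density $\frac{\widehat P(\alpha,\beta)}{\mud(\alpha)\mud(\beta)}\,p_{\alpha\beta}$ on each rectangle, splitting $\int\log(dP/d\Lambda)\,dP$ into the discrete relative entropy plus the weighted conditional entropies, and Gibbs' inequality for the equality case. Your additional step, which bounds the negative parts of both series by $1/e$ to justify separating the sums, is a useful piece of rigour that the paper's proof passes over without comment.
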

\begin{proof}
Let \(R_{\alpha\beta}:=I_\alpha\times I_\beta\). Because \(C\) is constant on each \(R_{\alpha\beta}\), we have
\[
\int_{X\times X} C\,dP
=\sum_{\alpha,\beta\in\Xd}\Cd(\alpha,\beta)\,P(R_{\alpha\beta})
=\sum_{\alpha,\beta\in\Xd}\Cd(\alpha,\beta)\,\widehat P(\alpha,\beta).
\]
Moreover, since \(\widehat P=(\ell\times\ell)_\#P\) and \(\Lambda^{\mathrm d}=(\ell\times\ell)_\#\Lambda\), the inequality gives
\[
\KL(P\|\Lambda)\ge \KL(\widehat P\|\Lambda^{\mathrm d}).
\]
Let us now assume that \(\KL(P\|\Lambda)<\infty\), from which \(P\ll\Lambda\) follows.
For each \((\alpha,\beta)\) with \(\widehat P(\alpha,\beta)>0\), let
\[
p_{\alpha\beta}
:=\frac{dP^{\alpha\beta}}{d(\lambda_\alpha\otimes\lambda_\beta)}.
\]
Since
\[
P|_{R_{\alpha\beta}}
=\widehat P(\alpha,\beta)\,P^{\alpha\beta},
\qquad
\Lambda|_{R_{\alpha\beta}}
=\mud(\alpha)\mud(\beta)\,(\lambda_\alpha\otimes\lambda_\beta),
\]
we have, on \(R_{\alpha\beta}\),
\[
\frac{dP}{d\Lambda}
=
\frac{\widehat P(\alpha,\beta)}{\mud(\alpha)\mud(\beta)}\,p_{\alpha\beta}
\qquad \Lambda\text{-a.e.}
\]
Finally, we obtain
\begin{align*}
\KL(P\|\Lambda)
&=
\sum_{\substack{\alpha,\beta\in\Xd\\ \widehat P(\alpha,\beta)>0}}
\int_{R_{\alpha\beta}}
\log\!\Big(
\frac{\widehat P(\alpha,\beta)}{\mud(\alpha)\mud(\beta)}\,p_{\alpha\beta}
\Big)\,dP \\
&=
\sum_{\alpha,\beta\in\Xd}
\widehat P(\alpha,\beta)\log\frac{\widehat P(\alpha,\beta)}{\mud(\alpha)\mud(\beta)}
\\
&\quad+
\sum_{\substack{\alpha,\beta\in\Xd\\ \widehat P(\alpha,\beta)>0}}
\widehat P(\alpha,\beta)
\int \log p_{\alpha\beta}\,dP^{\alpha\beta}.
\end{align*}
Notice that the first term is \(\KL(\widehat P\|\Lambda^{\mathrm d})\), and the second term is
\[
\sum_{\substack{\alpha,\beta\in\Xd\\ \widehat P(\alpha,\beta)>0}}
\widehat P(\alpha,\beta)\,
\KL\!\big(P^{\alpha\beta}\,\big\|\,\lambda_\alpha\otimes\lambda_\beta\big).
\]
This proves the decomposition. The equality
\(\KL(P\|\Lambda)=\KL(\widehat P\|\Lambda^{\mathrm d})\)
holds if and only if each conditional relative entropy vanishes, that is, if and only if
\(P^{\alpha\beta}=\lambda_\alpha\otimes\lambda_\beta\)
for every \((\alpha,\beta)\) with \(\widehat P(\alpha,\beta)>0\).
\end{proof}

\begin{proposition}[Exact reduction to the label problem]\label{prop:coarse-principle}
In the partitioned case above (clopen atoms \((I_\alpha)_{\alpha\in\Xd}\), conditionally uniform \(\mu\), and block-constant cost \(C\) induced by \(\Cd\)), the following hold for every \(\varepsilon>0\).
\begin{enumerate}[label=\textup{(\roman*)},ref=\textup{(\roman*)},leftmargin=2.2em]
\item\label{item:coarse-principle-minimizers}
\[
\inf_{P\in\Pi(\mu,\mu)} J_\varepsilon(P)
=
\inf_{\widehat P\in\widehat\Pi(\mud,\mud)} \widehat J_\varepsilon(\widehat P),
\]
and
\[
\operatorname*{argmin}_{P\in\Pi(\mu,\mu)} J_\varepsilon(P)
=
\Bigl\{\mathcal L(\widehat P): \widehat P\in\operatorname*{argmin}_{\widehat P\in\widehat\Pi(\mud,\mud)} \widehat J_\varepsilon(\widehat P)\Bigr\}.
\]
As a result, the unique minimiser of \(J_\varepsilon\) is \(P_\varepsilon=\mathcal L(\widehat P_\varepsilon)\).

\item\label{item:coarse-principle-rectangles}
For each \((\alpha,\beta)\in\Xd\times\Xd\),
\[
P_\varepsilon|_{I_\alpha\times I_\beta}
=
\widehat P_\varepsilon(\alpha,\beta)\,(\lambda_\alpha\otimes\lambda_\beta).
\]
It follows that \(P_\varepsilon(I_\alpha\times I_\beta)=\widehat P_\varepsilon(\alpha,\beta)\), and whenever \(\widehat P_\varepsilon(\alpha,\beta)>0\), the conditional law of \(P_\varepsilon\) on \(I_\alpha\times I_\beta\) is \(\lambda_\alpha\otimes\lambda_\beta\). In particular, if \(\widehat A\subset\Xd\times\Xd\) and
\[
A:=\bigcup_{(\alpha,\beta)\in\widehat A}(I_\alpha\times I_\beta),
\]
then \(P_\varepsilon(A)=\widehat P_\varepsilon(\widehat A)\).

\item\label{item:coarse-principle-tv}
For an arbitrary \(\widehat P,\widehat Q\in\widehat\Pi(\mud,\mud)\),
\[
\|\mathcal L(\widehat P)-\mathcal L(\widehat Q)\|_{\TV}
=
\|\widehat P-\widehat Q\|_{\TV}.
\]
\end{enumerate}
\end{proposition}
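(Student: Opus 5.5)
The plan is to compare $J_\varepsilon$ and $\widehat J_\varepsilon$ through the coarse-graining map $P\mapsto\widehat P=(\ell\times\ell)_\#P$ and the lift $\mathcal L$, using Lemma~\ref{lem:KL-chain-cont} as the main tool.

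\textbf{Step 1: preliminary checks on $\mathcal L$ and coarse-graining.} First I will verify that $\mathcal L(\widehat P)\in\Pi(\mu,\mu)$ whenever $\widehat P\in\widehat\Pi(\mud,\mud)$. The first marginal of $\mathcal L(\widehat P)$ is
\[
\sum_{\alpha}\Big(\sum_\beta\widehat P(\alpha,\beta)\Big)\lambda_\alpha=\sum_\alpha\mud(\alpha)\lambda_\alpha=\mu,
\]
and the second marginal is handled symmetrically. I will also check that $(\ell\times\ell)_\#\mathcal L(\widehat P)=\widehat P$, and that $(\ell\times\ell)_\#P\in\widehat\Pi(\mud,\mud)$ for every $P\in\Pi(\mu,\mu)$, which uses $\ell_\#\mu=\mud$. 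Every conditional law of $\mathcal L(\widehat P)$ on a rectangle with positive mass is $\lambda_\alpha\otimes\lambda_\beta$. So whenever $\KL(\widehat P\|\Lambda^{\mathrm d})<\infty$, the decomposition in Lemma~\ref{lem:KL-chain-cont} gives $J_\varepsilon(\mathcal L(\widehat P))=\widehat J_\varepsilon(\widehat P)$. To apply the lemma, I need finiteness of $\KL(\mathcal L(\widehat P)\|\Lambda)$. I will get this from the explicit density
\[
\frac{\widehat P(\alpha,\beta)}{\mud(\alpha)\mud(\beta)}\quad\text{on } I_\alpha\times I_\beta,
\]
which shows directly that $\KL(\mathcal L(\widehat P)\|\Lambda)=\KL(\widehat P\|\Lambda^{\mathrm d})$. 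If the discrete entropy is infinite, both sides are $+\infty$.

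\textbf{Step 2: proof of (i).} For any $P\in\Pi(\mu,\mu)$, Lemma~\ref{lem:KL-chain-cont} gives $J_\varepsilon(P)\ge\widehat J_\varepsilon(\widehat P)\ge\inf\widehat J_\varepsilon$. Conversely, lifts attain every discrete value, so the two infima agree; both are finite by Lemma~\ref{lem:Pstar-finite-cont}.

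For the argmin identity, one inclusion is immediate: if $\widehat P$ is a discrete minimiser, then $\mathcal L(\widehat P)$ attains the common infimum. For the other inclusion, let $P$ minimise $J_\varepsilon$. Then
\[
\inf J_\varepsilon=J_\varepsilon(P)\ge\widehat J_\varepsilon(\widehat P)\ge\inf\widehat J_\varepsilon=\inf J_\varepsilon,
\]
so both inequalities are equalities. Hence $\widehat P$ is a discrete minimiser, and $\KL(P\|\Lambda)=\KL(\widehat P\|\Lambda^{\mathrm d})$ with both finite. The equality case of Lemma~\ref{lem:KL-chain-cont} then forces $P^{\alpha\beta}=\lambda_\alpha\otimes\lambda_\beta$ on every rectangle of positive mass. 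I will then argue $P=\mathcal L(\widehat P)$: the rectangles $I_\alpha\times I_\beta$ partition $X\times X$, and $P$ agrees with $\mathcal L(\widehat P)$ on each of them. Uniqueness of $P_\varepsilon$ follows from uniqueness of $\widehat P_\varepsilon$.

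\textbf{Step 3: proofs of (ii) and (iii).} Part (ii) is read off from $P_\varepsilon=\mathcal L(\widehat P_\varepsilon)$ and the disjointness of the rectangles. For $P_\varepsilon(A)=\widehat P_\varepsilon(\widehat A)$, I sum over the countable disjoint union using countable additivity.

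For (iii), the signed measure $\mathcal L(\widehat P)-\mathcal L(\widehat Q)$ restricted to $I_\alpha\times I_\beta$ equals
\[
(\widehat P-\widehat Q)(\alpha,\beta)\,(\lambda_\alpha\otimes\lambda_\beta),
\]
so its total variation mass on that rectangle is $|\widehat P-\widehat Q|(\alpha,\beta)$. Summing over $(\alpha,\beta)$ and halving gives the claim.

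The main point needing care is the second inclusion in (i). It rests on knowing that minimisers have finite entropy, so that the equality case of Lemma~\ref{lem:KL-chain-cont} applies, and on reconstructing $P$ from its restrictions to the rectangles. Both are routine once stated, so I expect no serious obstacle.
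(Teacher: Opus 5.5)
Your proposal is correct and follows essentially the same route as the paper: the lower bound on $J_\varepsilon$ via coarse-graining and Lemma~\ref{lem:KL-chain-cont}, the matching upper bound via the lift $\mathcal L$, the squeeze argument for the argmin identity, and (ii), (iii) read off from the block structure and disjointness of the rectangles. Your explicit density computation showing $\KL(\mathcal L(\widehat P)\|\Lambda)=\KL(\widehat P\|\Lambda^{\mathrm d})$ supplies the finiteness hypothesis that the paper leaves implicit when it invokes the equality case of Lemma~\ref{lem:KL-chain-cont} for the lift.
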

\begin{proof}
For \(P\in\Pi(\mu,\mu)\), let \(\widehat P:=(\ell\times\ell)_\#P\). Lemma~\ref{lem:KL-chain-cont} gives
\[
J_\varepsilon(P)\ge \widehat J_\varepsilon(\widehat P),
\]
so
\[
\inf_{P\in\Pi(\mu,\mu)}J_\varepsilon(P)\ge \inf_{\widehat P\in\widehat\Pi(\mud,\mud)}\widehat J_\varepsilon(\widehat P).
\]

To show the other direction, we fix \(\widehat Q\in\widehat\Pi(\mud,\mud)\). Notice that its lift \(\mathcal L(\widehat Q)\) belongs to \(\Pi(\mu,\mu)\), since
\[
(\mathrm{proj}_1)_\#\mathcal L(\widehat Q)
=\sum_{\alpha}\Bigl(\sum_\beta \widehat Q(\alpha,\beta)\Bigr)\lambda_\alpha
=\sum_\alpha \mud(\alpha)\lambda_\alpha
=\mu,
\]
and, analogously, for the second marginal. The coarse-graining of \(\mathcal L(\widehat Q)\) is exactly \(\widehat Q\), and on each rectangle \(R_{\alpha\beta}:=I_\alpha\times I_\beta\) with \(\widehat Q(\alpha,\beta)>0\) the conditional law of \(\mathcal L(\widehat Q)\) is \(\lambda_\alpha\otimes\lambda_\beta\). We deduce that equality holds in Lemma~\ref{lem:KL-chain-cont}, so
\[
J_\varepsilon(\mathcal L(\widehat Q))=\widehat J_\varepsilon(\widehat Q).
\]
We now take infima over \(\widehat Q\) to obtain the reverse inequality and thus equality of the variational values.

Now suppose that \(\widehat Q\) minimises \(\widehat J_\varepsilon\). It follows that \(\mathcal L(\widehat Q)\) minimises \(J_\varepsilon\). On the other hand, if \(P\) minimises \(J_\varepsilon\) and \(\widehat P:=(\ell\times\ell)_\#P\), then \(\widehat P\) minimises \(\widehat J_\varepsilon\), and equality holds in Lemma~\ref{lem:KL-chain-cont}. We conclude that \(P=\mathcal L(\widehat P)\). This proves the identity of argmin sets.

Given that \(\widehat J_\varepsilon\) has the unique minimiser \(\widehat P_\varepsilon\), the continuous problem has the unique minimiser
\[
P_\varepsilon=\mathcal L(\widehat P_\varepsilon).
\]
The formula on each rectangle is \eqref{eq:lift-cont}. Finally, because the rectangles \(I_\alpha\times I_\beta\) are pairwise disjoint, we have
\[
\|\mathcal L(\widehat P)-\mathcal L(\widehat Q)\|_{\TV}
=
\frac12\sum_{\alpha,\beta\in\Xd}
\big|\widehat P(\alpha,\beta)-\widehat Q(\alpha,\beta)\big|
=
\|\widehat P-\widehat Q\|_{\TV}.
\]
The statements in \ref{item:coarse-principle-rectangles} and \ref{item:coarse-principle-tv} now follow immediately.
\end{proof}

For the rest of Section~\ref{sec:nonatomic}, we use hats to denote quantities for label levels. For the sequence \(\varepsilon_n:=1/n\), we set
\[
\widehat P_n:=\widehat P_{\varepsilon_n},
\qquad
P_n:=P_{\varepsilon_n}=\mathcal L(\widehat P_{\varepsilon_n}),
\qquad n\ge1.
\]

\subsection{Transfer of the discrete case}

Recall the diagonal label coupling \(\widehat P^\star\) from \eqref{eq:Phat-star}. By \eqref{eq:Pstar-cont} and \eqref{eq:lift-cont}, we have
\[
P^\star=\mathcal L(\widehat P^\star).
\]
For each \(n\ge1\), let
\[
\begin{aligned}
\widehat F_n
&:=\{((n,i),(n,j)):1\le i\neq j\le m_n\}\subset\Xd\times\Xd,\\
\mathcal F_n
&:=(\ell\times\ell)^{-1}(\widehat F_n)\subset X\times X,
\end{aligned}
\]
and define
\begin{equation}\label{eq:F-cont}
F:=\bigcup_{n\ge1}\mathcal F_n=(\ell\times\ell)^{-1}(\Fd)\subset X\times X.
\end{equation}

\begin{proposition}[Transfer from the label model]\label{prop:transfer-cont}
The following properties are true.
\begin{enumerate}[label=\textup{(\roman*)},ref=\textup{(\roman*)},leftmargin=2.2em]
\item\label{item:transfer-cont-tv}
\(P_\varepsilon\to P^\star\) in total variation, and
\[
\Gamma:=\supp(P^\star)=(\ell\times\ell)^{-1}(\Gammad)
=\bigcup_{\alpha\in\Xd}(I_\alpha\times I_\alpha).
\]
In particular, \(\Gamma\) is noncompact.

\item\label{item:transfer-cont-set}
\(F\) is clopen and noncompact, and for every \(\varepsilon>0\), it holds that
\[
P_\varepsilon(F)=\widehat P_\varepsilon(\Fd)=P_\varepsilon^{\mathrm d}(\Fd).
\]
More generally, we have \(P_\varepsilon(\mathcal F_n)=\widehat P_\varepsilon(\widehat F_n)\) for every \(n\ge1\).

\item\label{item:transfer-cont-rate}
For every \((x,y)\in X\times X\),
\[
I_\Gamma(x,y)=I_{\Gamma^{\mathrm d}}(\ell(x),\ell(y)).
\]
As a result, we get
\[
\inf_F I_\Gamma=\inf_{\Fd} I_{\Gamma^{\mathrm d}},\qquad
b-2a\le \inf_F I_\Gamma\le b<\infty.
\]
\end{enumerate}
\end{proposition}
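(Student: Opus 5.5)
For \ref{item:transfer-cont-tv}, I will combine Proposition~\ref{prop:coarse-principle}\ref{item:coarse-principle-tv} with Proposition~\ref{prop:TV}. Since \(P_\varepsilon=\mathcal L(\widehat P_\varepsilon)\) and \(P^\star=\mathcal L(\widehat P^\star)\), we get
\[
\|P_\varepsilon-P^\star\|_{\TV}=\|\widehat P_\varepsilon-\widehat P^\star\|_{\TV}=\|\Ped-\Pdstar\|_{\TV}\to0.
\]
For the support, I use that \(P^\star|_{R_\alpha}=\mud(\alpha)\lambda_\alpha\otimes\lambda_\alpha\) with \(\mud(\alpha)>0\), and that \(\supp(\lambda_\alpha\otimes\lambda_\alpha)=I_\alpha\times I_\alpha\). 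The rectangles are clopen, so any point outside \(\bigcup_\alpha R_\alpha\) has a neighbourhood, namely its own rectangle \(I_\alpha\times I_\beta\) with \(\alpha\ne\beta\), of \(P^\star\)-measure zero. Hence \(\Gamma=\bigcup_\alpha R_\alpha=(\ell\times\ell)^{-1}(\Gammad)\). For noncompactness, the sequence of points \((M_n,M_n)\in\Gamma\) is unbounded in \(\R^2\).

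For \ref{item:transfer-cont-set}, \(F\) is a union of rectangles \(I_\alpha\times I_\beta\). Since \(\ell\) is continuous (each \(I_\alpha\) is clopen) and \(\Xd\) is discrete, \((\ell\times\ell)^{-1}\) of any subset of \(\Xd\times\Xd\) is clopen. \(F\) is unbounded because it contains \(\mathcal F_n\) for every \(n\), so it is noncompact. The mass identities are exactly Proposition~\ref{prop:coarse-principle}\ref{item:coarse-principle-rectangles} applied to \(\widehat A=\Fd\) and \(\widehat A=\widehat F_n\), together with the identification \(\widehat P_\varepsilon=P^{\mathrm d}_\varepsilon\).

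For \ref{item:transfer-cont-rate}, I will unfold the definition \eqref{eq:BGNrate-prelim}. Fix \(k\), \(\sigma\), and points \((x_i,y_i)\in\Gamma\) for \(i\ge2\), with \((x_1,y_1)=(x,y)\). By \eqref{eq:cost-cont}, the bracket equals the same bracket for \(\Cd\) evaluated at the labels \((\ell(x_i),\ell(y_i))\), and \((\ell(x_i),\ell(y_i))\in\Gammad\) since \(\Gamma=(\ell\times\ell)^{-1}(\Gammad)\). This gives \(I_\Gamma(x,y)\le\Id(\ell(x),\ell(y))\). Conversely, given labels \((\alpha_i,\beta_i)\in\Gammad\), I choose representatives \(x_i\in I_{\alpha_i}\) and \(y_i\in I_{\beta_i}\). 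The resulting points lie in \(\Gamma\) and give the same bracket, so the reverse inequality holds and the suprema agree.

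Because \(\ell\times\ell\) maps \(F\) onto \(\Fd\) (every interval is nonempty), we get \(\inf_F I_\Gamma=\inf_{\Fd}\Id\). The bounds \(b-2a\le\inf_F I_\Gamma\le b<\infty\) then follow from Lemma~\ref{lem:I-lower} and the remark after it. The main point needing care is the surjectivity of \(\ell\times\ell\) from \(\Gamma\) onto \(\Gammad\) in the second inequality; it holds trivially here since each \(I_\alpha\) is nonempty. Beyond that, the argument is bookkeeping.
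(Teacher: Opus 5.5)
Your proposal is correct and follows essentially the same route as the paper. Total variation convergence comes from the isometry of the lift in Proposition~\ref{prop:coarse-principle}\ref{item:coarse-principle-tv} together with Proposition~\ref{prop:TV}, while the support and clopenness come from the clopen rectangle partition, and the rate identity comes from unfolding \eqref{eq:BGNrate-prelim} in both directions using block-constancy of \(C\) and a choice of representatives. Your explicit observation that \(\ell\times\ell\) maps \(F\) onto \(\Fd\) (and \(\Gamma\) onto \(\Gammad\)) spells out a step that the paper uses only implicitly when it passes to the infima.
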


\begin{proof}
For \ref{item:transfer-cont-tv}, Proposition~\ref{prop:TV} gives \(\widehat P_\varepsilon\to\widehat P^\star\) in total variation. The statement in \ref{item:coarse-principle-tv} yields
\[
\|P_\varepsilon-P^\star\|_{\TV}
=
\|\widehat P_\varepsilon-\widehat P^\star\|_{\TV}\to0.
\]
Let us set
\[
G:=(\ell\times\ell)^{-1}(\Gammad)=\bigcup_{\alpha\in\Xd}(I_\alpha\times I_\alpha).
\]
Notice that each rectangle \(I_\alpha\times I_\beta\) is clopen in \(X\times X\), and \(P^\star\) charges exactly the diagonal rectangles. On each \(I_\alpha\times I_\alpha\), the conditional law of \(P^\star\) is \(\lambda_\alpha\otimes\lambda_\alpha\), which has full support. This implies \(\supp(P^\star)=G\). Since \(M_n\to\infty\), the set \(G\subset X\times X\subset\R^2\) is unbounded and thus noncompact.

For \ref{item:transfer-cont-set}, the sets \(I_\alpha\times I_\beta\) create a clopen partition of \(X\times X\), and \(F\) is the union of the subfamily indexed by \(\Fd\). It follows that \(F\) is clopen. Because \(M_n\to\infty\), the set \(F\) is unbounded and thus noncompact. The mass identities follow from \ref{item:coarse-principle-rectangles} and the identification \(\widehat P_\varepsilon=P_\varepsilon^{\mathrm d}\).

For \ref{item:transfer-cont-rate}, we fix \((x,y)\in X\times X\), and set \(\alpha:=\ell(x)\), \(\beta:=\ell(y)\). Let us consider an admissible term in \eqref{eq:BGNrate-prelim} which defines \(I_\Gamma(x,y)\), with \(k\ge2\), \((x_i,y_i)\in\Gamma\) for \(i=2,\dots,k\), and \(\sigma\in\Sigma(k)\). Since \(\Gamma=(\ell\times\ell)^{-1}(\Gammad)\), for each \(i\ge2\), there exists \(\gamma_i\in\Xd\) such that
\[
(\ell(x_i),\ell(y_i))=(\gamma_i,\gamma_i)\in\Gammad.
\]
Because \(C(u,v)=\Cd(\ell(u),\ell(v))\), the respective cycle increment is
\[
\sum_{i=1}^k \Cd(\alpha_i,\beta_i)-\sum_{i=1}^k \Cd(\alpha_i,\beta_{\sigma(i)}),
\]
where \((\alpha_1,\beta_1)=(\alpha,\beta)\) and \((\alpha_i,\beta_i)=(\gamma_i,\gamma_i)\) for \(i\ge2\). Note that this is an admissible discrete increment in the definition of \(I_{\Gamma^{\mathrm d}}(\alpha,\beta)\). We conclude that
\[
I_\Gamma(x,y)\le I_{\Gamma^{\mathrm d}}(\ell(x),\ell(y)).
\]
On the other hand, every admissible discrete choice can be realised by selecting arbitrary representatives in the corresponding intervals \(I_{\gamma_i}\). Therefore,
\[
I_\Gamma(x,y)\ge I_{\Gamma^{\mathrm d}}(\ell(x),\ell(y)).
\]
This establishes the equality. We now take the infimum over \(F=(\ell\times\ell)^{-1}(\Fd)\) and obtain
\[
\inf_F I_\Gamma=\inf_{\Fd} I_{\Gamma^{\mathrm d}},
\]
For the upper bound, we fix an arbitrary \((x,y)\in F\). We then have \((\ell(x),\ell(y))\in \Fd\), so \(I_{\Gamma^{\mathrm d}}(\ell(x),\ell(y))\le b\) by the one-point bound given after Lemma~\ref{lem:I-lower}. We conclude that \(\inf_F I_\Gamma\le b\). The lower bound now follows immediately from Lemma~\ref{lem:I-lower}.
\end{proof}

\begin{proof}[Proof of Theorem~\ref{thm:main-cont}]
The statement in \ref{item:transfer-cont-tv} proves \ref{item:main-cont-tv}.
By the results from \ref{item:transfer-cont-set}, we have
\[
P_\varepsilon(F)=P_\varepsilon^{\mathrm d}(\Fd),\qquad \varepsilon>0.
\]
Therefore, \ref{item:main-discrete-closed-set} gives
\[
\limsup_{\varepsilon\downarrow0}\varepsilon\log P_\varepsilon(F)
=
\limsup_{\varepsilon\downarrow0}\varepsilon\log P_\varepsilon^{\mathrm d}(\Fd)
>
-\inf_{\Fd} I_{\Gamma^{\mathrm d}}.
\]
By \ref{item:transfer-cont-rate}, we get
\[
\inf_F I_\Gamma=\inf_{\Fd} I_{\Gamma^{\mathrm d}},
\]
so
\[
\limsup_{\varepsilon\downarrow0}\varepsilon\log P_\varepsilon(F)>-\inf_F I_\Gamma.
\]
\end{proof}

\subsection{\texorpdfstring{No full LDP on \(X\times X\)}{No full LDP on X x X}}\label{subsec:no-ambient-rate}

We say that \((P_\varepsilon)\) satisfies a full LDP on \(X\times X\) with rate \(I_{\mathrm{amb}}\) if \(I_{\mathrm{amb}}:X\times X\to[0,\infty]\) is lower semicontinuous and
\[
\limsup_{\varepsilon\downarrow0}\varepsilon\log P_\varepsilon(\mathcal C)\le -\inf_{\mathcal C} I_{\mathrm{amb}}
\qquad\text{for every closed }\mathcal C\subset X\times X,
\]
\[
\liminf_{\varepsilon\downarrow0}\varepsilon\log P_\varepsilon(O)\ge -\inf_O I_{\mathrm{amb}}
\qquad\text{for every open }O\subset X\times X.
\]

\begin{lemma}[Decay exponent on a fixed off-diagonal rectangle]\label{lem:fixed-rectangle-exponent}
We fix \(N\ge1\) and \(1\le i\neq j\le m_N\). Let us also set
\[
R_{N,i,j}:=I_{N,i}\times I_{N,j}\subset X\times X.
\]
It follows that \(R_{N,i,j}\) is clopen and
\[
\lim_{\varepsilon\downarrow0}\varepsilon\log P_\varepsilon(R_{N,i,j})=-b.
\]
\end{lemma}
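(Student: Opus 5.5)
By Proposition~\ref{prop:coarse-principle}\ref{item:coarse-principle-rectangles}, \(P_\varepsilon(R_{N,i,j})=\widehat P_\varepsilon((N,i),(N,j))=P^{\mathrm d}_\varepsilon((N,i),(N,j))\) for every \(\varepsilon>0\). Clopenness is immediate from Lemma~\ref{lem:X-Polish-clopen-cont}, since \(R_{N,i,j}\) is a product of two clopen sets. So the whole statement reduces to a fixed-label estimate in the discrete model: we must show \(\varepsilon\log P^{\mathrm d}_\varepsilon(\alpha,\beta)\to-b\) for \(\alpha=(N,i)\), \(\beta=(N,j)\) with \(N\) fixed and \(\varepsilon\downarrow0\) along the whole continuum, not only along \(\varepsilon_n=1/n\).

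I will use the Schr\"odinger factorisation from Appendix~\ref{app:schroedinger}: \(P^{\mathrm d}_\varepsilon(x,y)=f_\varepsilon(x)g_\varepsilon(y)e^{-\Cd(x,y)/\varepsilon}\Lambda^{\mathrm d}(x,y)\), with symmetry \(f_\varepsilon=g_\varepsilon\) up to a constant. The cross-ratio identity for the four points \((\alpha,\alpha),(\beta,\beta),(\alpha,\beta),(\beta,\alpha)\) gives
\[
P^{\mathrm d}_\varepsilon(\alpha,\beta)P^{\mathrm d}_\varepsilon(\beta,\alpha)=P^{\mathrm d}_\varepsilon(\alpha,\alpha)P^{\mathrm d}_\varepsilon(\beta,\beta)\,e^{-2b/\varepsilon}.
\]
By symmetry of the problem (both marginals equal to \(\mud\), symmetric cost, uniqueness), \(P^{\mathrm d}_\varepsilon\) is symmetric. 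By the permutation invariance within block \(N\) of the high-multiplicity labels, the diagonal masses at \(\alpha\) and \(\beta\) coincide. Hence
\[
P^{\mathrm d}_\varepsilon(\alpha,\beta)=P^{\mathrm d}_\varepsilon(\alpha,\alpha)\,e^{-b/\varepsilon},
\]
which is the analogue of Lemma~\ref{lem:o-vs-d-app} for general \(\varepsilon\). This is the step I would check most carefully, namely that the appendix identities hold for every \(\varepsilon\) and not only along \(\varepsilon_n\). If they do not, I would rederive the symmetry directly from uniqueness of the minimiser: the minimiser is invariant under transposition and under any permutation of \(\{1,\dots,m_N\}\) applied to both coordinates, because these maps preserve \(J^{\mathrm d}_\varepsilon\).

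It then remains to show that \(\varepsilon\log P^{\mathrm d}_\varepsilon(\alpha,\alpha)\to0\). The upper bound is trivial, since \(P^{\mathrm d}_\varepsilon(\alpha,\alpha)\le\mud(\alpha)\le1\). For the lower bound, Proposition~\ref{prop:TV} gives total variation convergence to the diagonal coupling. In particular,
\[
P^{\mathrm d}_\varepsilon(\alpha,\alpha)\ge\mud(\alpha)-\|P^{\mathrm d}_\varepsilon-\Pdstar\|_{\TV}\ge\tfrac12\mud(\alpha)>0
\]
for all small \(\varepsilon\). Because \(\alpha\) is fixed, \(\varepsilon\log(\mud(\alpha)/2)\to0\). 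Combining the two bounds yields
\[
\varepsilon\log P_\varepsilon(R_{N,i,j})=-b+\varepsilon\log P^{\mathrm d}_\varepsilon(\alpha,\alpha)\to-b.
\]
This gives the limit rather than merely a limsup or liminf. The main obstacle is thus only the exact symmetry identity relating the off-diagonal mass to the diagonal mass for all \(\varepsilon\); everything else follows directly from results already in place.
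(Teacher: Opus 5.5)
Your proof is correct and follows the paper's argument: you reduce to the label mass via Proposition~\ref{prop:coarse-principle}, combine the cross-ratio identity with the swap and within-block permutation symmetries to get $P^{\mathrm d}_\varepsilon(\alpha,\beta)=P^{\mathrm d}_\varepsilon(\alpha,\alpha)e^{-b/\varepsilon}$, and use total-variation convergence to obtain $\varepsilon\log P^{\mathrm d}_\varepsilon(\alpha,\alpha)\to0$. The only difference is cosmetic: the paper first moves $(N,i),(N,j)$ to $(N,1),(N,2)$ by symmetry and then cites Lemma~\ref{lem:o-vs-d-app}, which is already stated for every fixed $\varepsilon>0$ and not only along $\varepsilon_n=1/n$, so the concern you flagged does not arise.
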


\begin{proof}
By Lemma~\ref{lem:X-Polish-clopen-cont}, we know that \(R_{N,i,j}\) is clopen. Proposition~\ref{prop:coarse-principle} gives
\[
P_\varepsilon(R_{N,i,j})
=
\widehat P_\varepsilon\big(((N,i),(N,j))\big).
\]
For a fixed \(N\), the permutations of \(\{1,\dots,m_N\}\) that act inside block \(N\) preserve \(\mud\) and \(\Cd\). We now apply Lemma~\ref{lem:symmetry} to the label problem and get
\[
\begin{aligned}
\widehat P_\varepsilon(((N,r),(N,r)))
&=
\widehat P_\varepsilon(((N,1),(N,1))),\\
\widehat P_\varepsilon(((N,r),(N,s)))
&=
\widehat P_\varepsilon(((N,1),(N,2)))
\end{aligned}
\]
for all \(r,s\in\{1,\dots,m_N\}\) with \(r\neq s\).
It follows that
\[
\widehat P_\varepsilon\big(((N,i),(N,j))\big)
=
\widehat P_\varepsilon\big(((N,1),(N,2))\big)
=
\widehat P_\varepsilon\big(((N,1),(N,1))\big)e^{-b/\varepsilon}
\]
by Lemma~\ref{lem:o-vs-d-app}. By Proposition~\ref{prop:TV},
\[
\widehat P_\varepsilon\big(((N,1),(N,1))\big)\to \mud((N,1))>0,
\]
and so \(\varepsilon\log \widehat P_\varepsilon(((N,1),(N,1)))\to0\). As a result,
\[
\varepsilon\log P_\varepsilon(R_{N,i,j})
=
\varepsilon\log \widehat P_\varepsilon\big(((N,1),(N,1))\big)-b
\longrightarrow -b.
\]
\end{proof}

\begin{proof}[Proof of Theorem~\ref{thm:intro-no-ambient}]
Let us assume, for the sake of contradiction, that \((P_\varepsilon)\) satisfies a full LDP on \(X\times X\) with lower semicontinuous rate \(I_{\mathrm{amb}}\). We fix \(N\ge1\) and \(1\le i\neq j\le m_N\). Given that \(R_{N,i,j}\) is both open and closed, the large-deviation upper and lower bounds imply
\[
\lim_{\varepsilon\downarrow0}\varepsilon\log P_\varepsilon(R_{N,i,j})
=-\inf_{R_{N,i,j}}I_{\mathrm{amb}}.
\]
By Lemma~\ref{lem:fixed-rectangle-exponent}, the left-hand side equals \(-b\). We have
\[
\inf_{R_{N,i,j}}I_{\mathrm{amb}}=b
\qquad\text{for every }N\ge1,\ 1\le i\neq j\le m_N.
\]
Since
\[
F=\bigcup_{n\ge1}\ \bigcup_{\substack{1\le i,j\le m_n\\ i\neq j}} R_{n,i,j},
\]
we obtain
\[
\inf_F I_{\mathrm{amb}}
=
\inf_{n\ge1}\ \inf_{\substack{1\le i,j\le m_n\\ i\neq j}}\ \inf_{R_{n,i,j}}I_{\mathrm{amb}}
=
b.
\]
Because \(F\) is closed, the large-deviation upper bound gives
\[
\limsup_{\varepsilon\downarrow0}\varepsilon\log P_\varepsilon(F)\le -b.
\]
On the other hand, \(\mathcal F_n\subset F\) holds for every \(n\). By \ref{item:transfer-cont-set} and \ref{item:block-estimate-mass}, we have:
\[
P_{\varepsilon_n}(\mathcal F_n)=\widehat P_{\varepsilon_n}(\widehat F_n)
\ge \frac{w_n}{16}\,e^{-(b-\kappa)n}
\]
for all sufficiently large \(n\), where \(\varepsilon_n:=1/n\). From \(n^{-1}\log(w_n/16)\to0\), we deduce that
\[
\limsup_{n\to\infty}\varepsilon_n\log P_{\varepsilon_n}(\mathcal F_n)\ge -(b-\kappa).
\]
We also note that
\[
\limsup_{\varepsilon\downarrow0}\varepsilon\log P_\varepsilon(F)\ge -(b-\kappa),
\]
but this contradicts \(\limsup_{\varepsilon\downarrow0}\varepsilon\log P_\varepsilon(F)\le -b\). We conclude that no such function \(I_{\mathrm{amb}}\) exists.
\end{proof}

\appendix
\section{Criterion for closed-set upper bounds}\label{app:superlevel-tail}
In this part, we provide a convenient sufficient condition, in terms of \(I_\Gamma\), under which the compact-set upper bound extends to all closed sets. In our counterexample, the obstruction comes from the tail exponent on the escaping closed set \(F\), that is,
\[
\beta(F)\ge -(b-\kappa)>-\inf_F I_\Gamma.
\]
Below, we identify the condition that rules out this mechanism.

\subsection{\texorpdfstring{A superlevel-tail condition}{A superlevel-tail condition}}\label{subsec:tail-away0}

We assume the following compact-set upper bound, and then show a sufficient condition for extending this bound from compact sets to closed sets.
\begin{equation}\label{eq:BGN-compact-add}
\limsup_{\varepsilon\downarrow0}\ \varepsilon\log P_{\varepsilon}(\mathcal K)\ \le\ -\inf_{\mathcal K} I_\Gamma
\qquad\text{for every compact }\mathcal K\subset X\times Y.
\end{equation}

\begin{remark}[Strict superlevel sets for lower semicontinuous rates]\label{rem:strict-superlevels}
If \(f\) is lower semicontinuous, then \(\{f>\delta\}\) is open, but \(\{f\ge \delta\}\) need not be closed. For this reason, we analyse the tail conditions with strict superlevels \(\{I_\Gamma>\delta\}\). For the BGN rate, \(I_\Gamma\) is lower semicontinuous when \(C\) is continuous (see \cite[Lem.~4.2]{BGN}).
\end{remark}

\begin{definition}[Superlevel-tail condition for \(I_\Gamma\)]\label{def:I-tight-away0}
We say that \((P_\varepsilon)_{\varepsilon>0}\) satisfies the \emph{superlevel-tail condition for \(I_\Gamma\)} if for every \(\delta>0\),
\begin{equation}\label{eq:I-tight-away0}
\inf_{K\subset X\times Y\ \mathrm{compact}}
\ \limsup_{\varepsilon\downarrow0}\ \varepsilon\log P_{\varepsilon}\big(\{I_\Gamma>\delta\}\cap K^c\big)
\ \le\ -\delta.
\end{equation}
\end{definition}

\begin{proposition}[The superlevel-tail condition implies the closed-set upper bound]\label{prop:I-tight-away0-implies-closed}
Let us assume \eqref{eq:BGN-compact-add}, and suppose that \((P_\varepsilon)\) satisfies the superlevel-tail condition for \(I_\Gamma\) in the sense of Definition~\ref{def:I-tight-away0}. It follows that for every closed set \(F\subset X\times Y\),
\[
\limsup_{\varepsilon\downarrow0}\ \varepsilon\log P_{\varepsilon}(F)\ \le\ -\inf_{F} I_\Gamma.
\]
In the notation of Proposition~\ref{prop:tail-criterion}, this is \(\beta(F)\le -\inf_F I_\Gamma\) for all closed \(F\).
\end{proposition}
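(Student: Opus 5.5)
We reduce the claim to Proposition~\ref{prop:tail-criterion}: it suffices to show $\beta(F)\le -\inf_F I_\Gamma$ for a fixed closed $F\subset X\times Y$. Set $s:=\inf_F I_\Gamma$. If $s=0$ there is nothing to prove, because $\beta(F)\le 0$ always. So assume $s>0$, and fix an arbitrary $\delta\in(0,s)$; if $s=\infty$, let $\delta$ be arbitrary and large. The key observation is the inclusion
\[
F\subset\{I_\Gamma\ge s\}\subset\{I_\Gamma>\delta\}.
\]
Hence for every compact $K$ we have $F\cap K^c\subset\{I_\Gamma>\delta\}\cap K^c$, and by monotonicity
\[
\limsup_{\varepsilon\downarrow0}\varepsilon\log P_\varepsilon(F\cap K^c)\le\limsup_{\varepsilon\downarrow0}\varepsilon\log P_\varepsilon\big(\{I_\Gamma>\delta\}\cap K^c\big).
\]

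Next take the infimum over compact $K$ on both sides. The left side becomes $\beta(F)$. The right side is at most $-\delta$ by the superlevel-tail condition \eqref{eq:I-tight-away0}. Therefore $\beta(F)\le-\delta$ for every $\delta<s$. Letting $\delta\uparrow s$ gives $\beta(F)\le -s$; in the case $s=\infty$, letting $\delta\to\infty$ gives $\beta(F)=-\infty$.

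Finally, apply Proposition~\ref{prop:tail-criterion} with $\cZ=X\times Y$, $\cQ_\varepsilon=P_\varepsilon$ and $I=I_\Gamma$. The compact-set hypothesis of that proposition is exactly \eqref{eq:BGN-compact-add}. It yields
\[
\limsup_{\varepsilon\downarrow0}\varepsilon\log P_\varepsilon(F)\le\max\{-s,\beta(F)\}=-s,
\]
which is the claim. Closedness of $F$ is used only through Proposition~\ref{prop:tail-criterion}, where it makes $F\cap K$ compact. Lower semicontinuity of $I_\Gamma$ is not needed: the strict superlevel set enters only as a set containing $F$, and measurability follows from Remark~\ref{rem:strict-superlevels}, since $\{I_\Gamma>\delta\}$ is open.

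The argument is essentially bookkeeping. The only delicate points are the use of strict superlevels, which forces the approximation $\delta\uparrow s$ rather than taking $\delta=s$ directly, and the separate handling of the degenerate cases $s=0$ and $s=\infty$.
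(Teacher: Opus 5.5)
Your proof is correct and follows the paper's argument essentially step for step: the inclusion \(F\subset\{I_\Gamma>\delta\}\) for \(\delta<\inf_F I_\Gamma\), then monotonicity and the infimum over compact sets to get \(\beta(F)\le-\delta\), then the limit \(\delta\uparrow\inf_F I_\Gamma\) with the cases \(\inf_F I_\Gamma\in\{0,\infty\}\) handled separately, and finally Proposition~\ref{prop:tail-criterion}. One small quibble is that your closing aside contradicts itself, since openness of \(\{I_\Gamma>\delta\}\) is precisely lower semicontinuity; this costs nothing, because measurability of these sets is already presupposed by the superlevel-tail hypothesis.
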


\begin{proof}
We fix a closed set \(F\subset X\times Y\) and set \(m:=\inf_F I_\Gamma\in[0,\infty]\). If \(m=0\), then the bound follows automatically since \(P_\varepsilon(F)\le 1\) implies
\(\limsup_{\varepsilon\downarrow0}\varepsilon\log P_\varepsilon(F)\le 0=-m\). If \(m<\infty\), we fix \(\eta\in(0,m)\) and set \(\delta:=m-\eta\). It follows that \(F\subset\{I_\Gamma>\delta\}\), so for every compact \(K\subset X\times Y\),
\[
P_\varepsilon(F\cap K^c)\le P_\varepsilon(\{I_\Gamma>\delta\}\cap K^c).
\]
If we now take \(\varepsilon\log\), then \(\limsup_{\varepsilon\downarrow0}\), and then \(\inf_K\), we arrive at
\[
\beta(F)\le
\inf_{K}\limsup_{\varepsilon\downarrow0}\varepsilon\log P_\varepsilon(\{I_\Gamma>\delta\}\cap K^c)
\le -\delta=-(m-\eta)
\]
by Definition~\ref{def:I-tight-away0}. It now suffices to let \(\eta\downarrow0\) to obtain \(\beta(F)\le -m\). If \(m=\infty\), we fix \(M>0\). We then have \(F\subset\{I_\Gamma>M\}\), and the same argument with \(\delta:=M\) gives \(\beta(F)\le -M\). Note that \(M>0\) is arbitrary, so \(\beta(F)=-\infty\) follows.

Finally, by Proposition~\ref{prop:tail-criterion} (applied with \(\cZ=X\times Y\), \(\cQ_\varepsilon=P_\varepsilon\), and \(I=I_\Gamma\)), we have
\[
\limsup_{\varepsilon\downarrow0}\varepsilon\log P_\varepsilon(F)
\le
\max\big\{-\inf_F I_\Gamma,\ \beta(F)\big\}
=
-\inf_F I_\Gamma,
\]
as required.
\end{proof}

\begin{corollary}[Failure of the superlevel-tail condition]\label{cor:fail-I-tight-away0}
In the context of Theorem~\ref{thm:main-cont}, the family \((P_\varepsilon)_{\varepsilon>0}\) does not satisfy the superlevel-tail condition of Definition~\ref{def:I-tight-away0}. More precisely, for every
\[
\delta\in(b-\kappa,\; b-2a),
\]
\[
\inf_{K\subset X\times X\ \mathrm{compact}}
\limsup_{\varepsilon\downarrow0}\varepsilon\log
P_\varepsilon\bigl(\{I_\Gamma>\delta\}\cap K^c\bigr)
\ge -(b-\kappa)>-\delta.
\]
\end{corollary}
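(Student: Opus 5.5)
The plan is to exhibit the escaping sets \(\mathcal F_n\subset F\) inside the strict superlevel set \(\{I_\Gamma>\delta\}\) and then argue exactly as in Proposition~\ref{prop:escaping-obstruction}. Fix \(\delta\in(b-\kappa,b-2a)\), which is a nonempty interval by \eqref{eq:params}. The first step is to show \(F\subset\{I_\Gamma>\delta\}\). By Proposition~\ref{prop:transfer-cont}\ref{item:transfer-cont-rate}, for \((x,y)\in F\) we have \(I_\Gamma(x,y)=I_{\Gamma^{\mathrm d}}(\ell(x),\ell(y))\), and \((\ell(x),\ell(y))\in\Fd\). Lemma~\ref{lem:I-lower} then gives \(I_\Gamma(x,y)\ge b-2a>\delta\). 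Consequently, for every compact \(K\subset X\times X\),
\[
P_\varepsilon\bigl(\{I_\Gamma>\delta\}\cap K^c\bigr)\ge P_\varepsilon(F\cap K^c).
\]

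The second step is the escape property of the pieces \(\mathcal F_n\). A compact \(K\subset X\times X\subset\R^2\) is bounded, say \(K\subset[-R,R]^2\). Since \(\mathcal F_n\subset[M_n,\infty)^2\) and \(M_n\to\infty\) (as in the proof of Lemma~\ref{lem:X-Polish-clopen-cont}), we get \(\mathcal F_n\subset F\cap K^c\) for all large \(n\).

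The third step is the mass lower bound. Along \(\varepsilon_n=1/n\), Proposition~\ref{prop:transfer-cont}\ref{item:transfer-cont-set} and Proposition~\ref{prop:block-estimate}\ref{item:block-estimate-mass} give
\[
P_{\varepsilon_n}(\mathcal F_n)=\widehat P_{\varepsilon_n}(\widehat F_n)\ge \tfrac{w_n}{16}e^{-(b-\kappa)n}
\]
for large \(n\). Because \(n^{-1}\log(w_n/16)\to0\) (Lemma~\ref{lem:Hmu-proof-app}), it follows that \(\limsup_n\varepsilon_n\log P_{\varepsilon_n}(\mathcal F_n)\ge-(b-\kappa)\).

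Combining the three steps, for each compact \(K\) we obtain
\[
\limsup_{\varepsilon\downarrow0}\varepsilon\log P_\varepsilon(\{I_\Gamma>\delta\}\cap K^c)\ge\limsup_n\varepsilon_n\log P_{\varepsilon_n}(\mathcal F_n)\ge-(b-\kappa).
\]
This is precisely Proposition~\ref{prop:escaping-obstruction} applied with the closed set replaced by the same sets \(E_n=\mathcal F_n\); openness of the superlevel set is irrelevant here, since only the monotonicity and the escape property are used. Taking the infimum over \(K\) yields the bound \(\ge-(b-\kappa)\), and \(-(b-\kappa)>-\delta\) because \(\delta>b-\kappa\). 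Hence \eqref{eq:I-tight-away0} fails for this \(\delta\), so the superlevel-tail condition fails.

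The only mildly delicate point is the first step: it needs the exact identification \(I_\Gamma=I_{\Gamma^{\mathrm d}}\circ(\ell\times\ell)\), which is already available. The rest is bookkeeping.
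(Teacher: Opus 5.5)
Your proposal is correct and follows essentially the same route as the paper: the inclusion $F\subset\{I_\Gamma>\delta\}$ via Proposition~\ref{prop:transfer-cont}\ref{item:transfer-cont-rate} and Lemma~\ref{lem:I-lower}, the escape of $\mathcal F_n$ from every compact set because $M_n\to\infty$, and the block mass bound of Proposition~\ref{prop:block-estimate}\ref{item:block-estimate-mass} along $\varepsilon_n=1/n$. The only cosmetic difference is that the paper routes the per-$K$ bound through $\beta(F)\ge-(b-\kappa)$ before taking the infimum over $K$, whereas you bound each compact $K$ directly.
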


\begin{proof}
By \ref{item:transfer-cont-rate}, we have
\[
\inf_F I_\Gamma=\inf_{\Fd}I_{\Gamma^{\mathrm d}}\ge b-2a>\delta,
\]
so \(F\subset\{I_\Gamma>\delta\}\). This implies that for every compact \(K\subset X\times X\),
\[
P_\varepsilon\bigl(\{I_\Gamma>\delta\}\cap K^c\bigr)\ge P_\varepsilon(F\cap K^c).
\]

Let us now fix such a compact \(K\). From \(M_n\to\infty\), we infer \(\mathcal F_n\subset K^c\) for all sufficiently large \(n\). By \ref{item:transfer-cont-set} and \ref{item:block-estimate-mass}, we obtain
\[
P_{\varepsilon_n}(F\cap K^c)\ge P_{\varepsilon_n}(\mathcal F_n)
=\widehat P_{\varepsilon_n}(\widehat F_n)
\ge \frac{w_n}{16}\,e^{-(b-\kappa)n}
\]
for all sufficiently large \(n\), where \(\varepsilon_n:=1/n\). From \(n^{-1}\log(w_n/16)\to0\), we get
\[
\limsup_{\varepsilon\downarrow0}\varepsilon\log P_\varepsilon(F\cap K^c)\ge -(b-\kappa).
\]
We now take the infimum over \(K\) to obtain \(\beta(F)\ge -(b-\kappa)\), and so:
\[
\inf_{K}\limsup_{\varepsilon\downarrow0}\varepsilon\log
P_\varepsilon\bigl(\{I_\Gamma>\delta\}\cap K^c\bigr)
\ge \beta(F)\ge -(b-\kappa).
\]
The final strict inequality follows immediately from \(\delta>b-\kappa\).
\end{proof}

\begin{remark}[A one-set tail criterion]\label{rem:local-tail-dom}
We fix a closed set \(F\subset\cZ\). If there exist compact sets \((K_R)_{R\ge1}\) and numbers \((\psi_R)_{R\ge1}\) with \(\psi_R\uparrow\infty\) such that
\[
\limsup_{\varepsilon\downarrow0}\varepsilon\log\cQ_\varepsilon(F\cap K_R^c)\le -\psi_R
\qquad\text{for every }R\ge1,
\]
then \(\beta(F)=-\infty\). By Proposition~\ref{prop:tail-criterion}, we have
\[
\limsup_{\varepsilon\downarrow0}\varepsilon\log\cQ_\varepsilon(F)\le -\inf_{z\in F}I(z).
\]
\end{remark}

In Remark~\ref{rem:local-tail-dom}, we gave a convenient one-set criterion for \(\beta(F)=-\infty\). Note that the abstract tail mechanism is standard, so the remaining question, specific to EOT, is to identify the intrinsic hypotheses on \((X,Y,C)\), or on the limiting support \(\Gamma\), which guarantee \(\beta(F)\le -\inf_F I_\Gamma\) for every closed \(F\) without using exponential tightness. Such a hypothesis must exclude the multiplicity-growth mechanism, possibly by applying coercivity at infinity or even geometric restrictions on near-optimal cycles.

\section{Schr\"odinger facts on countable discrete spaces}\label{app:schroedinger}
We present here selected auxiliary facts on the discrete Schr\"odinger problem that we applied in Section~\ref{sec:discrete}.

\begin{proposition}[Schr\"odinger structure]\label{prop:exist-unique}
For every \(\varepsilon>0\), \(J_\varepsilon^{\mathrm d}\) has a unique minimiser \(\Ped\in\Pi(\mud,\mud)\). Moreover, there exist strictly positive functions \(f_\varepsilon,g_\varepsilon:\Xd\to(0,\infty)\) such that
\[
\Ped(x,y)=f_\varepsilon(x)\,g_\varepsilon(y)\,e^{-\Cd(x,y)/\varepsilon}\,\Lambda^{\mathrm d}(x,y),
\qquad \Lambda^{\mathrm d}:=\mud\otimes\mud.
\]
\end{proposition}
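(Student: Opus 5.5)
We prove Proposition~\ref{prop:exist-unique} in three steps: existence and uniqueness by the Gibbs reformulation, the product structure from first-order conditions for a finite-entropy minimiser, and strict positivity of the potentials.

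\emph{Existence and uniqueness.} By Lemma~\ref{lem:Hmu}, \(J_\varepsilon^{\mathrm d}(\Pdstar)=\varepsilon H(\mud)<\infty\), so the finiteness assumption of Section~\ref{sec:intro} holds. We then write
\[
J_\varepsilon^{\mathrm d}(P)=\varepsilon\,\KL(P\|R_\varepsilon)-\varepsilon\log Z_\varepsilon,
\qquad R_\varepsilon\propto e^{-\Cd/\varepsilon}\Lambda^{\mathrm d},
\]
and note that \(0<Z_\varepsilon\le 1\) because \(\Cd\ge0\). The set \(\Pi(\mud,\mud)\) is weakly compact, since its marginals are tight. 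The functional \(\KL(\cdot\|R_\varepsilon)\) is weakly lower semicontinuous and strictly convex on its finite domain. Together these give a unique minimiser, as in \cite[Prop.~A.1]{BGN}.

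\emph{Factorisation.} We work atom by atom on the countable space. First we show that \(\Ped(x,y)>0\) for every pair \((x,y)\). Suppose \(\Ped(x_0,y_0)=0\) for some pair. Since \(\mud\) has full support and \(\Ped\) has marginals \(\mud\), there exist \(y_1\) with \(\Ped(x_0,y_1)>0\) and \(x_1\) with \(\Ped(x_1,y_0)>0\). Consider the four-point perturbation that moves mass \(t\) onto \((x_0,y_0)\) and \((x_1,y_1)\) and removes it from \((x_0,y_1)\) and \((x_1,y_0)\). This perturbation preserves both marginals. The derivative of the entropy term at \(t=0^+\) is \(-\infty\), coming from the \(t\log t\) contribution at the zero entry. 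The cost term changes only linearly in \(t\). Hence \(J_\varepsilon^{\mathrm d}\) strictly decreases for small \(t>0\), which contradicts optimality. So \(\Ped>0\) everywhere.

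Now set
\[
h(x,y):=\log\frac{\Ped(x,y)}{\Lambda^{\mathrm d}(x,y)}+\frac{\Cd(x,y)}{\varepsilon}.
\]
With strict positivity in hand, the same four-point perturbations are admissible in both directions. The first-order condition therefore gives the cycle identity
\[
h(x,y)+h(x',y')=h(x,y')+h(x',y)\qquad\text{for all }x,x',y,y'.
\]
Fix a base point \(o\) and define
\[
\phi(x):=h(x,o)-h(o,o),\qquad \psi(y):=h(o,y).
\]
The cycle identity with \(x'=y'=o\) gives \(h=\phi\oplus\psi\). Setting \(f_\varepsilon=e^{\phi}\) and \(g_\varepsilon=e^{\psi}\) yields the claimed formula, and both functions are strictly positive.

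\emph{Main obstacle.} The delicate step is justifying the first-order computation on an infinite space. Each perturbation touches only four entries, so the difference quotient of \(J_\varepsilon^{\mathrm d}\) involves finitely many terms. It is therefore an honest one-variable computation, and no interchange of limit and infinite sum is needed. The remaining point is that the perturbed plan must keep finite entropy, which is immediate because only finitely many entries change.
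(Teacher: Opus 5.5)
Your proof is correct. The existence-and-uniqueness step is essentially the paper's: a Gibbs reformulation through \(R_\varepsilon\) and an appeal to \cite[Prop.~A.1]{BGN}. For the factorisation, however, you take a genuinely different route.

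The paper takes the product form \(\Ped=f_\varepsilon\,g_\varepsilon\,e^{-\Cd/\varepsilon}\Lambda^{\mathrm d}\) directly from \cite[Prop.~A.1]{BGN} as well. Only afterwards does it read off strict positivity from \(\Lambda^{\mathrm d}(x,y)=\mud(x)\mud(y)>0\).

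You reverse the order. You first prove \(\Ped>0\) by a marginal-preserving four-point perturbation, whose right derivative is \(-\infty\) because of the \(t\log t\) term at the vanishing entry. You then use two-sided perturbations to obtain the additive cycle identity for \(h=\log(\Ped/\Lambda^{\mathrm d})+\Cd/\varepsilon\); the \(+1\) terms in the derivative cancel. Finally, you build the potentials from a base point.

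The paper's route is shorter, and it is the one that extends to general Polish spaces. There, potentials exist only \(\mu\otimes\nu\)-a.e.\ and require the general Schr\"odinger-potential theory behind \cite[Prop.~A.1]{BGN}.

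Your route is self-contained for the countable model. It uses only that every atom of \(\mud\) has positive mass. Hence every finitely supported marginal-preserving perturbation is admissible and changes \(J^{\mathrm d}_\varepsilon\) by a finite, explicitly differentiable amount. The KL series converges absolutely when finite, so no limit interchange is needed. It also delivers the factorisation everywhere together with positivity, instead of importing both from the general result.
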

\begin{proof}
By Lemma~\ref{lem:Hmu}, the finiteness assumption from the general discussion in Section~\ref{sec:intro} holds for the discrete model. If
\[
dR_\varepsilon^{\mathrm d}:=(Z_\varepsilon^{\mathrm d})^{-1}e^{-\Cd/\varepsilon}\,d\Lambda^{\mathrm d},
\]
then the minimisation of \(J_\varepsilon^{\mathrm d}\) is equivalent to minimising \(\varepsilon\,\KL(\cdot\|R_\varepsilon^{\mathrm d})\). By \cite[Prop.~A.1]{BGN}, existence, uniqueness, and the factorisation hold. Since \eqref{eq:mu} gives \(\mud(x)>0\) for every \(x\in\Xd\), we have \(\Lambda^{\mathrm d}(x,y)>0\) for all \(x,y\in\Xd\), so \(\Ped(x,y)>0\) is true for all \(x,y\in\Xd\).
\end{proof}

\begin{lemma}[\(2\times2\) cross-ratio identity]\label{lem:cross-ratio-proof-app}
Let \(\varepsilon>0\) and \(P=\Ped\). For all \(x_1,x_2,y_1,y_2\in\Xd\),
\[
\begin{aligned}
&P(x_1,y_1)P(x_2,y_2)\,
  e^{(\Cd(x_1,y_1)+\Cd(x_2,y_2))/\varepsilon}\\
&\quad=
P(x_1,y_2)P(x_2,y_1)\,
  e^{(\Cd(x_1,y_2)+\Cd(x_2,y_1))/\varepsilon}.
\end{aligned}
\]
\end{lemma}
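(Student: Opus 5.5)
The cross-ratio identity should follow directly from the product structure in Proposition~\ref{prop:exist-unique}. For every \(\varepsilon>0\) we know that \(P=\Ped\) factorises as
\[
P(x,y)=f_\varepsilon(x)\,g_\varepsilon(y)\,e^{-\Cd(x,y)/\varepsilon}\,\mud(x)\mud(y),
\]
with strictly positive \(f_\varepsilon,g_\varepsilon\). So I would multiply each entry by \(e^{\Cd(x,y)/\varepsilon}\), which cancels the cost weight. This gives
\[
P(x,y)\,e^{\Cd(x,y)/\varepsilon}=\phi(x)\,\psi(y),
\]
where \(\phi:=f_\varepsilon\,\mud\) and \(\psi:=g_\varepsilon\,\mud\).

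Next I form the two products in the statement. The left-hand side becomes
\[
\phi(x_1)\psi(y_1)\,\phi(x_2)\psi(y_2).
\]
The right-hand side becomes
\[
\phi(x_1)\psi(y_2)\,\phi(x_2)\psi(y_1).
\]
These two expressions are the same product of four scalars in a different order, so the identity holds for all \(x_1,x_2,y_1,y_2\in\Xd\). The identity is between products and involves no division, so positivity is not strictly needed. It is still available from Proposition~\ref{prop:exist-unique} if one later wants the ratio form, for instance \(o_n=d_n e^{-bn}\) in Lemma~\ref{lem:o-vs-d-app}.

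The only real content is the factorisation itself, and that is already supplied by \cite[Prop.~A.1]{BGN} through Proposition~\ref{prop:exist-unique}. Apart from invoking it, I expect no obstacle. The one point to check is that the factorisation holds pointwise on the countable discrete space, and not merely \(\Lambda^{\mathrm d}\)-a.e. Every atom of \(\Lambda^{\mathrm d}\) has positive mass, so a.e. equality is the same as equality everywhere.
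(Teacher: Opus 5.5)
Your proof is correct and is essentially the paper's argument: both rest entirely on the factorisation in Proposition~\ref{prop:exist-unique}. The paper writes it as a ratio in which the \(\Lambda^{\mathrm d}\)-factors cancel, while you write it as an equality of products after absorbing \(\mud\) into the potentials; your product form avoids needing positivity, and your check that \(\Lambda^{\mathrm d}\)-a.e.\ equality is pointwise equality on the atoms is correct.
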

\begin{proof}
We set:
\[
D:=\Cd(x_1,y_1)+\Cd(x_2,y_2)-\Cd(x_1,y_2)-\Cd(x_2,y_1).
\]
By Proposition~\ref{prop:exist-unique}, it holds that
\[
\frac{P(x_1,y_1)P(x_2,y_2)}{P(x_1,y_2)P(x_2,y_1)}
=\exp\!\Big(-\frac{D}{\varepsilon}\Big)
\frac{\Lambda^{\mathrm d}(x_1,y_1)\Lambda^{\mathrm d}(x_2,y_2)}
     {\Lambda^{\mathrm d}(x_1,y_2)\Lambda^{\mathrm d}(x_2,y_1)}.
\]
From \(\Lambda^{\mathrm d}=\mud\otimes\mud\), we deduce that the ratio is \(1\), and the claim follows immediately.
\end{proof}

\begin{lemma}[Symmetries and uniqueness]\label{lem:symmetry}
Let \(T:\Xd\to\Xd\) be a bijection with \(T_\#\mud=\mud\) and \(\Cd(Tx,Ty)=\Cd(x,y)\) for all \(x,y\). It follows that \((T\times T)_\#\Ped=\Ped\) for every \(\varepsilon>0\).
Moreover, if the two marginals coincide \((\mud,\mud)\) and \(\Cd\) is symmetric, then \(S(x,y):=(y,x)\) also satisfies \(S_\#\Ped=\Ped\).
\end{lemma}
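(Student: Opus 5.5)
The natural approach is a uniqueness argument. If \(P=\Ped\) is the unique minimiser of \(J_\varepsilon^{\mathrm d}\) (Proposition~\ref{prop:exist-unique}), then its image under any transformation preserving \(J_\varepsilon^{\mathrm d}\) and the constraint set \(\Pi(\mud,\mud)\) is again a minimiser. Hence the image must equal \(P\).

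For the first claim, set \(Q:=(T\times T)_\#P\). I first check that \(Q\in\Pi(\mud,\mud)\). The first marginal of \(Q\) is \(T_\#\) applied to the first marginal of \(P\), that is \(T_\#\mud=\mud\), and the second marginal is handled the same way. Next I check that the cost is unchanged:
\[
\sum_{x,y}\Cd(x,y)Q(x,y)=\sum_{x,y}\Cd(Tx,Ty)P(x,y)=\sum_{x,y}\Cd(x,y)P(x,y).
\]
This uses that \(T\) is a bijection, so \(Q(Tx,Ty)=P(x,y)\) and the sum can be reindexed. For the entropy, \(T\times T\) is a bijection of \(\Xd\times\Xd\). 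It also fixes \(\Lambda^{\mathrm d}\), since \(\Lambda^{\mathrm d}(Tx,Ty)=\mud(Tx)\mud(Ty)=\mud(x)\mud(y)\). To see the last equality, note that \(T_\#\mud=\mud\) together with bijectivity gives \(\mud(T^{-1}z)=\mud(z)\) for every \(z\), hence \(\mud(Tx)=\mud(x)\). It follows that
\[
\KL(Q\|\Lambda^{\mathrm d})=\sum_{x,y}P(x,y)\log\frac{P(x,y)}{\Lambda^{\mathrm d}(Tx,Ty)}=\KL(P\|\Lambda^{\mathrm d}).
\]
Therefore \(J_\varepsilon^{\mathrm d}(Q)=J_\varepsilon^{\mathrm d}(P)\), so \(Q\) is a minimiser, and uniqueness gives \(Q=P\).

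For the swap \(S(x,y)=(y,x)\), the argument is the same. The marginals of \(S_\#P\) are the second and first marginals of \(P\), both equal to \(\mud\), so \(S_\#P\in\Pi(\mud,\mud)\). Symmetry of \(\Cd\) shows the cost is preserved. The reference measure \(\Lambda^{\mathrm d}=\mud\otimes\mud\) is \(S\)-invariant, so the entropy is preserved as well. Uniqueness again gives \(S_\#P=P\).

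The only delicate point is to verify correctly that \(T\times T\) preserves \(\Lambda^{\mathrm d}\) pointwise, and to do the series reindexing in the entropy properly. Reindexing is legitimate because \(T\times T\) is a bijection and all the series involved are either absolutely convergent (the cost term) or have terms bounded below, so rearranging them does not change their value.
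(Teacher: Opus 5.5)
Your proposal is correct and follows the paper's route: the pushforward by \(T\times T\) (resp.\ \(S\)) preserves \(\Pi(\mud,\mud)\), the cost term and \(\KL(\cdot\|\Lambda^{\mathrm d})\), hence \(J_\varepsilon^{\mathrm d}\), so uniqueness of the minimiser forces invariance. One small fix is needed in your rearrangement justification: ``terms bounded below'' does not by itself allow reordering a series, but here the negative parts satisfy \(\bigl(p\log(p/q)\bigr)^-\le q\) with \(\sum q=1\), and in any case \(\KL(\Ped\|\Lambda^{\mathrm d})<\infty\) makes the series absolutely convergent, so the step holds.
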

\begin{proof}
Notice that the pushforward by \(T\times T\) preserves \(\Pi(\mud,\mud)\), the cost term, and \(\KL(\cdot\|\Lambda^{\mathrm d})\), which also implies that \(J_\varepsilon^{\mathrm d}\) is preserved. By uniqueness of the minimiser, we obtain \((T\times T)_\#\Ped=\Ped\). We could apply the same argument with the map \(S\) to show \(S_\#\Ped=\Ped\) when the marginals are \((\mud,\mud)\) and \(\Cd\) is symmetric.
\end{proof}

\begin{lemma}[A high-multiplicity pair]\label{lem:o-vs-d-app}
Let us fix \(\varepsilon>0\) and \(n\ge1\), and define
\[
d_{n,\varepsilon}:=P^{\mathrm d}_\varepsilon((n,1),(n,1)),
\qquad
o_{n,\varepsilon}:=P^{\mathrm d}_\varepsilon((n,1),(n,2)).
\]
We then obtain:
\[
o_{n,\varepsilon}=d_{n,\varepsilon}e^{-b/\varepsilon}.
\]
In particular, for \(\varepsilon_n=1/n\), it holds that
\[
o_n=d_n e^{-bn}.
\]
\end{lemma}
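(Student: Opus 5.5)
The plan is to use the cross-ratio identity of Lemma~\ref{lem:cross-ratio-proof-app} together with the symmetries of Lemma~\ref{lem:symmetry}. First I apply Lemma~\ref{lem:cross-ratio-proof-app} with \(x_1=(n,1)\), \(y_1=(n,2)\), \(x_2=(n,2)\), \(y_2=(n,1)\). By \eqref{eq:cost} we have \(\Cd(x_1,y_1)=\Cd(x_2,y_2)=b\) and \(\Cd(x_1,y_2)=\Cd(x_2,y_1)=0\). This gives
\[
P^{\mathrm d}_\varepsilon((n,1),(n,2))\,P^{\mathrm d}_\varepsilon((n,2),(n,1))\,e^{2b/\varepsilon}
=
P^{\mathrm d}_\varepsilon((n,1),(n,1))\,P^{\mathrm d}_\varepsilon((n,2),(n,2)).
\]

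Next I use the symmetries to identify the factors. The transposition \(T\) of \((n,1)\) and \((n,2)\), fixing all other points, preserves \(\mud\), since both points carry mass \(w_n/(m_n+1)\). It also preserves \(\Cd\), since the cost depends only on the block indices and on whether a label is \(0\), equal to the other label, or different from it. By Lemma~\ref{lem:symmetry}, \((T\times T)_\#\Ped=\Ped\). It follows that
\[
P^{\mathrm d}_\varepsilon((n,2),(n,2))=d_{n,\varepsilon}
\qquad\text{and}\qquad
P^{\mathrm d}_\varepsilon((n,2),(n,1))=P^{\mathrm d}_\varepsilon((n,1),(n,2))=o_{n,\varepsilon}.
\]
The swap symmetry \(S\) gives the second equality as well, because \(\Cd\) is symmetric.

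Substituting these into the cross-ratio identity gives \(o_{n,\varepsilon}^2e^{2b/\varepsilon}=d_{n,\varepsilon}^2\). By Proposition~\ref{prop:exist-unique} all entries are strictly positive, so taking positive square roots yields \(o_{n,\varepsilon}=d_{n,\varepsilon}e^{-b/\varepsilon}\). Specialising to \(\varepsilon=1/n\) gives \(o_n=d_ne^{-bn}\).

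The only step needing care is checking that \(T\) preserves \(\Cd\) in all four cases of \eqref{eq:cost}, which is immediate from the definition. The argument requires \(m_n\ge2\), so that \((n,2)\) exists; this is guaranteed by \eqref{eq:concrete-seqs}.
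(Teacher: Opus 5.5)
Your proof is correct and follows essentially the same route as the paper: the $2\times2$ cross-ratio identity of Lemma~\ref{lem:cross-ratio-proof-app}, then the within-block symmetry of Lemma~\ref{lem:symmetry} to identify the two diagonal and the two off-diagonal entries, then strict positivity to take square roots. Your choice of $(x_1,y_1,x_2,y_2)$ is a relabelling of the paper's $x_1=y_1=(n,1)$, $x_2=y_2=(n,2)$ and produces the same identity; your explicit check that the transposition preserves $\mud$ and $\Cd$, and your note that $m_n\ge2$, are welcome details that the paper leaves implicit.
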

\begin{proof}
It suffices to apply Lemma~\ref{lem:cross-ratio-proof-app} with
\[
x_1=y_1=(n,1),\qquad x_2=y_2=(n,2).
\]
It follows that \(\Cd((n,1),(n,1))=\Cd((n,2),(n,2))=0\) and
\(\Cd((n,1),(n,2))=\Cd((n,2),(n,1))=b\), so
\[
\begin{aligned}
&P^{\mathrm d}_\varepsilon((n,1),(n,1))
 P^{\mathrm d}_\varepsilon((n,2),(n,2))\\
&\quad=
P^{\mathrm d}_\varepsilon((n,1),(n,2))
P^{\mathrm d}_\varepsilon((n,2),(n,1))\,e^{2b/\varepsilon}.
\end{aligned}
\]
By Lemma~\ref{lem:symmetry}, the diagonal terms are both \(d_{n,\varepsilon}\) and the off-diagonal terms are both \(o_{n,\varepsilon}\). We have \(d_{n,\varepsilon},o_{n,\varepsilon}>0\) by Proposition~\ref{prop:exist-unique}, so the claim follows.
\end{proof}

\section{Proof of Proposition~\ref{prop:block-estimate}}\label{app:block-bookkeeping}
\begin{proof}
\begin{enumerate}[label=\textup{(\alph*)},ref=\textup{(\alph*)},leftmargin=2.2em]
\item\label{item:block-proof-a}
By optimality of \(P_n^{\mathrm d}\), we obtain
\[
J_{\varepsilon_n}^{\mathrm d}(P_n^{\mathrm d})\le J_{\varepsilon_n}^{\mathrm d}(\Pdstar)=\varepsilon_n H(\mud).
\]
From \(\KL(P_n^{\mathrm d}\|\Lambda^{\mathrm d})\ge0\), we infer \(\sum_{x,y} \Cd(x,y)P_n^{\mathrm d}(x,y)\le \varepsilon_n H(\mud)\).
If \(x\in B_n\) and \(y\in B_n^c\), then \(y\in B_m\) for some \(m\neq n\) and \(\Cd(x,y)=L_n+L_m\ge L_n\). It follows that
\[
\sum_{x\in B_n,\,y\in B_n^c} \Cd(x,y)P_n^{\mathrm d}(x,y)\ge L_n\,\Delta_n.
\]
Moreover, given that both marginals equal \(\mud\),
\[
P_n^{\mathrm d}(B_n\times B_n^c)=\mud(B_n)-P_n^{\mathrm d}(B_n\times B_n)=P_n^{\mathrm d}(B_n^c\times B_n)=\Delta_n,
\]
and on \(B_n^c\times B_n\), we also have \(\Cd\ge L_n\).
Finally, we obtain \(\sum \Cd\,dP_n^{\mathrm d}\ge 2L_n\Delta_n\), so \(\Delta_n\le \varepsilon_n H(\mud)/(2L_n)\).

\item\label{item:block-proof-b}
We fix \(n\) and let \(\tau\) be an arbitrary permutation of \(\{1,\dots,m_n\}\). We also define a bijection \(T_\tau:\Xd\to\Xd\) by
\[
\begin{aligned}
T_\tau(n,0)&=(n,0),\\
T_\tau(n,i)&=(n,\tau(i))\qquad (1\le i\le m_n),\\
T_\tau(x)&=x\qquad (x\notin B_n).
\end{aligned}
\]
It follows that \(T_{\tau\#}\mud=\mud\) and \(\Cd(T_\tau x,T_\tau y)=\Cd(x,y)\) for all \(x,y\in\Xd\). By Lemma~\ref{lem:symmetry}, \((T_\tau\times T_\tau)_\#P_n^{\mathrm d}=P_n^{\mathrm d}\). We infer that \(\sum_{y\notin B_n}P_n^{\mathrm d}((n,i),y)\) is independent of \(i\in\{1,\dots,m_n\}\). By definition this common value is \(e_n\), so summing over \(i=1,\dots,m_n\) gives
\[
m_n e_n\le \Delta_n.
\]
We now apply \ref{item:block-proof-a} to obtain
\begin{equation}\label{eq:e-bound}
e_n\le \frac{\Delta_n}{m_n}\le \frac{\varepsilon_n H(\mud)}{2L_n m_n}.
\end{equation}

Notice that by swap symmetry (Lemma~\ref{lem:symmetry} applied to \(S(x,y)=(y,x)\)), it follows that
\[
P_n^{\mathrm d}((n,0),(n,1))=P_n^{\mathrm d}((n,1),(n,0))=r_n.
\]
Let us apply the within-block permutation symmetry with permutations \(\tau\) that satisfy \(\tau(1)=i\). We then obtain
\[
P_n^{\mathrm d}((n,0),(n,i))=P_n^{\mathrm d}((n,0),(n,1))=r_n,\qquad 1\le i\le m_n.
\]
Finally, we have
\[
\sum_{i=1}^{m_n}P_n^{\mathrm d}((n,0),(n,i))=m_n r_n\le \mud(\{(n,0)\})=\frac{w_n}{m_n+1},
\]
so
\begin{equation}\label{eq:r-bound}
r_n\le \frac{w_n}{m_n(m_n+1)}.
\end{equation}

Observe that the row sum at \((n,1)\) is
\begin{equation}\label{eq:rowsum}
\mud(\{(n,1)\}) = d_n + (m_n-1)o_n + r_n + e_n.
\end{equation}
From \(o_n=d_n e^{-bn}\), we deduce
\[
d_n\Big(1+(m_n-1)e^{-bn}\Big)=\mud(\{(n,1)\})-r_n-e_n.
\]

We shall begin by bounding the numerator. By \eqref{eq:r-bound}, for all sufficiently large \(n\), we get \(m_n\ge4\), so
\[
r_n\le \frac{w_n}{m_n(m_n+1)}\le \frac14\,\frac{w_n}{m_n+1}=\frac14\,\mud(\{(n,1)\}).
\]
We also apply \eqref{eq:e-bound} and \(\mud(\{(n,1)\})=w_n/(m_n+1)\) to obtain:
\[
\frac{e_n}{\mud(\{(n,1)\})}
\le \frac{\varepsilon_n H(\mud)}{2L_n m_n}\cdot\frac{m_n+1}{w_n}
\le \frac{\varepsilon_n H(\mud)}{L_n w_n}
\xrightarrow[n\to\infty]{}0.
\]
In the last limit, we used \eqref{eq:concrete-seqs} (see Lemma~\ref{lem:Hmu-proof-app}). This implies that for all sufficiently large \(n\),
\[
e_n\le \frac14\,\mud(\{(n,1)\}).
\]
Therefore,
\[
\mud(\{(n,1)\})-r_n-e_n\ge \frac12\,\mud(\{(n,1)\}).
\]

For the denominator, \(m_n e^{-\kappa n}\to1\) and \(b>\kappa\) imply \((m_n-1)e^{-bn}\to0\). In particular, for all sufficiently large \(n\), it holds that
\[
1+(m_n-1)e^{-bn}\le 2.
\]
It now suffices to combine the numerator and denominator bounds as to obtain
\[
d_n\ge \frac{\frac12\,\mud(\{(n,1)\})}{2}=\frac14\,\mud(\{(n,1)\}).
\]

\item
By symmetry, we notice that each ordered pair \(((n,i),(n,j))\) with \(1\le i\neq j\le m_n\) has mass \(o_n\). Because \(F_n\) contains \(m_n(m_n-1)\) such pairs, \(P_n^{\mathrm d}(F_n)=m_n(m_n-1)o_n\) follows. Let us apply \(o_n=d_n e^{-bn}\) and \ref{item:block-proof-b}. We then have
\[
P_n^{\mathrm d}(F_n)=m_n(m_n-1)d_n e^{-bn}
\ge m_n(m_n-1)\,\frac{w_n}{4(m_n+1)}\,e^{-bn}.
\]
For large \(n\), \(m_n\ge 3\), so \((m_n-1)/(m_n+1)\ge 1/2\). Therefore,
\[
P_n^{\mathrm d}(F_n)\ge \frac{w_n}{8}\,m_n e^{-bn}.
\]
Since \(m_n e^{-\kappa n}\to1\), for all large \(n\), we have \(m_n\ge \frac12 e^{\kappa n}\), so
\[
m_n e^{-bn}\ge \frac12\,e^{-(b-\kappa)n}
\]
for large \(n\). Finally, for large \(n\),
\[
P_n^{\mathrm d}(F_n)\ge \frac{w_n}{16}\,e^{-(b-\kappa)n}.
\]
\end{enumerate}
\end{proof}

\section{Verifications for the discrete model}\label{app:routine-checks}
\begin{lemma}[Compact sets in discrete spaces]\label{lem:compact-finite}
If \(\Omega\) is discrete, then every compact subset \(E\subset\Omega\) is finite.
\end{lemma}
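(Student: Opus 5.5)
The plan is to argue by contradiction with an open cover. Let \(E\subset\Omega\) be compact, where \(\Omega\) carries the discrete topology, so every singleton \(\{x\}\) is open in \(\Omega\). (For the metric \(d(x,y)=\ind{x\neq y}\) on \(\Xd\), this holds because \(\{x\}\) is the open ball of radius \(1/2\) around \(x\).)

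The family \(\{\{x\}:x\in E\}\) is then an open cover of \(E\). By compactness it has a finite subcover \(\{x_1\},\dots,\{x_k\}\), so \(E\subset\{x_1,\dots,x_k\}\) and \(E\) is finite.

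In the paper we also need this for product spaces such as \(\Xd\times\Xd\). For that case I would first observe that a product of two discrete spaces is discrete: \(\{(x,y)\}=\{x\}\times\{y\}\) is open in the product topology. The same argument then applies. With the metric \(\max(d,d)\), this is again the discrete metric.

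No step is a real obstacle. The only point to state explicitly is that discreteness is meant topologically, namely that singletons are open, and that this holds for the discrete metric and for finite products of discrete spaces.
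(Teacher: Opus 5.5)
Your proof is correct, but it argues from the open-cover definition of compactness, whereas the paper argues with sequences.

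The paper assumes \(E\) is infinite and picks pairwise distinct points \(x_n\in E\). It then notes that in a discrete space such a sequence has no convergent subsequence, which contradicts compactness. This step uses that compactness implies sequential compactness, which is legitimate here because every space involved is metric.

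Your cover of \(E\) by the open singletons \(\{x\}\), \(x\in E\), gives finiteness in one step. It works for any topological space with the discrete topology and needs neither metrizability nor sequences.

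You also make explicit something the paper uses without comment. The lemma is applied in \(\Xd\times\Xd\): once to see that \(\Fd\) is noncompact, and again in the proof of Theorem~\ref{thm:main-discrete}, where compact subsets of \(\Xd\times\Xd\) are claimed to be finite. Both uses require that a finite product of discrete spaces is discrete, which your observation \(\{(x,y)\}=\{x\}\times\{y\}\) supplies.
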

\begin{proof}
Suppose that \(E\) were infinite. Let us also choose pairwise distinct \(x_n\in E\). In a discrete space, there does not exist an infinite sequence of distinct points with a convergent subsequence, which contradicts compactness.
\end{proof}

\begin{lemma}[Verification of basic properties of \eqref{eq:concrete-seqs}]\label{lem:Hmu-proof-app}
For the sequences \eqref{eq:concrete-seqs} and \(\varepsilon_n:=1/n\),
\[
\begin{gathered}
\sum_{n\ge1} w_n=1,\qquad
\sum_{n\ge1} w_n\log\frac{m_n+1}{w_n}<\infty,\\
L_n\to\infty,\qquad
m_n e^{-\kappa n}\to1,
\end{gathered}
\]
and
\[
\frac{\varepsilon_n}{L_n w_n}\to0,
\qquad
\frac1n\log w_n\to0.
\]
\end{lemma}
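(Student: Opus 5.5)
The plan is to verify the six assertions directly from the explicit definitions in \eqref{eq:concrete-seqs}: \(m_n=\max\{2,\lfloor e^{\kappa n}\rfloor\}\), \(L_n=n^4\), \(w_n=\zeta(3)^{-1}n^{-3}\) and \(\varepsilon_n=1/n\).

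\emph{Normalisation and the two cheap limits.} Since \(\sum_{n\ge1}n^{-3}=\zeta(3)\), we get \(\sum_n w_n=1\). Clearly \(L_n=n^4\to\infty\). Next,
\[
\frac1n\log w_n=-\frac{\log\zeta(3)+3\log n}{n}\to0 .
\]
Finally,
\[
\frac{\varepsilon_n}{L_nw_n}=\frac{\zeta(3)\,n^{-1}}{n^{4}n^{-3}}=\frac{\zeta(3)}{n^{2}}\to0 .
\]

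\emph{The asymptotics of \(m_n\).} Since \(\kappa>0\), we have \(e^{\kappa n}\ge2\) for all large \(n\), and then \(m_n=\lfloor e^{\kappa n}\rfloor\). For such \(n\),
\[
e^{\kappa n}-1<m_n\le e^{\kappa n},
\]
so that
\[
1-e^{-\kappa n}<m_ne^{-\kappa n}\le1 .
\]
Hence \(m_ne^{-\kappa n}\to1\).

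\emph{The entropy series.} Here I expect the only mild bookkeeping. Write
\[
\log\frac{m_n+1}{w_n}=\log(m_n+1)+\log\zeta(3)+3\log n .
\]
For all \(n\) we have \(m_n+1\le\max\{3,e^{\kappa n}+1\}\le 3e^{\kappa n}\), hence
\[
\log(m_n+1)\le\log3+\kappa n .
\]
Therefore
\[
w_n\log\frac{m_n+1}{w_n}\le\zeta(3)^{-1}n^{-3}\bigl(\log3+\log\zeta(3)+\kappa n+3\log n\bigr)=O(n^{-2}).
\]
Every term is nonnegative, because \(w_n\le1\) and \(m_n+1\ge1\). By comparison with \(\sum n^{-2}\), the series converges. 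This completes the verification, since none of the steps depends on anything beyond \(0<\kappa\) from \eqref{eq:params}.
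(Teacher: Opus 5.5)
Your proposal is correct and follows essentially the same route as the paper: you compute each limit directly from the explicit sequences, note that \(m_n=\lfloor e^{\kappa n}\rfloor\) for large \(n\), and use \(m_n+1\le 3e^{\kappa n}\) to dominate the entropy series by a summable \(O(n^{-2})\) bound. Your two-sided estimate \(1-e^{-\kappa n}<m_ne^{-\kappa n}\le 1\) and your nonnegativity remark only make explicit details that the paper leaves implicit.
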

\begin{proof}
Observe that the normalisation \(\sum_n w_n=1\) follows immediately from \(w_n=\zeta(3)^{-1}n^{-3}\), and \(L_n=n^4\to\infty\). Furthermore,
\[
\frac{\varepsilon_n}{L_n w_n}
=\frac{1/n}{n^4\cdot (\zeta(3)^{-1}n^{-3})}
=\frac{\zeta(3)}{n^2}\xrightarrow[n\to\infty]{}0.
\]
Since \(m_n=\lfloor e^{\kappa n}\rfloor\) for all large \(n\), we have \(m_n e^{-\kappa n}\to1\). Also,
\[
\frac1n\log w_n
=-\frac{3\log n+\log\zeta(3)}{n}\xrightarrow[n\to\infty]{}0.
\]
For the entropy term, it is true that
\[
\sum_{n\ge1} w_n\log\frac{m_n+1}{w_n}
=\sum_{n\ge1} w_n\log(m_n+1)+\sum_{n\ge1} w_n\log\frac1{w_n}.
\]
From \(m_n+1\le 3e^{\kappa n}\), we get
\[
\sum_{n\ge1} w_n\log(m_n+1)
\le \frac{1}{\zeta(3)}\sum_{n\ge1}\frac{\kappa n+\log 3}{n^3}<\infty.
\]
Finally, \(w_n=\zeta(3)^{-1}n^{-3}\) implies
\[
\sum_{n\ge1} w_n\log\frac1{w_n}
=\frac{1}{\zeta(3)}\sum_{n\ge1}\frac{\log\zeta(3)+3\log n}{n^3}<\infty.
\]
We conclude that \(\sum_n w_n\log((m_n+1)/w_n)<\infty\).
\end{proof}

\bibliographystyle{amsplain}
\bibliography{open_relaxation2}
\end{document}